\documentclass[12pt,a4paper,twoside,reqno]{amsart}

\usepackage{amssymb}
\usepackage[dvips]{graphicx}
\usepackage[ansinew]{inputenc}
\usepackage{a4wide}
\textheight=23cm

\newtheorem{maintheorem}{Theorem}

\newcommand{\cmt}{\begin{maintheorem}}
\newcommand{\fmt}{\end{maintheorem}}

\newtheorem{maincorollary}[maintheorem]{Corollary}

\newcommand{\cmc}{\begin{maincorollary}}
\newcommand{\fmc}{\end{maincorollary}}

\newtheorem{T}{Theorem}[section]
\newcommand{\cte}{\begin{T}}
\newcommand{\fte}{\end{T}}

\newtheorem{Corollary}[T]{Corollary}
\newcommand{\cco}{\begin{Corollary}}
\newcommand{\fco}{\end{Corollary}}

\newtheorem{Proposition}[T]{Proposition}
\newcommand{\cpr}{\begin{Proposition}}
\newcommand{\fpr}{\end{Proposition}}

\newtheorem{Lemma}[T]{Lemma}
\newcommand{\cle}{\begin{Lemma}}
\newcommand{\fle}{\end{Lemma}}

\newcommand{\csle}{\begin{Sublema}}
\newcommand{\fsle}{\end{Sublemma}}

\theoremstyle{definition}
\newtheorem{Example}[T]{Example}
\newcommand{\cex}{\begin{Example}}
\newcommand{\fex}{\end{Example}}

\newtheorem{Remark}[T]{Remark}
\newcommand{\cre}{\begin{Remark}}
\newcommand{\fre}{\end{Remark}}

\newtheorem{Definition}[T]{Definition}
\newcommand{\cde}{\begin{Definition}}
\newcommand{\fde}{\end{Definition}}

\renewcommand{\th} {\theta}       
\newcommand\mycom[2]{\genfrac{}{}{0pt}{}{#1}{#2}}

\newcommand{\w} {\omega} 
\def \RR {{\mathbb R}}

\def \ZZ {{\mathbb Z}}
\def \NN {{\mathbb N}}

\def \TT {{\mathbb T}}

\newcommand{\Leb}{m}
\newcommand{\leb}{m}
\newcommand{\lip}{\operatorname{Lip}}
\newcommand{\dist}{\operatorname{dist}}
\newcommand{\supp}{\operatorname{supp}}
\newcommand{\cc}{{\mathcal C}}
\newcommand{\cp}{{\mathcal P}}

\title[Almost sure mixing rates for NUE]{Almost sure mixing rates for non-uniformly expanding maps}

\author{Xin Li}
\address{ICTP\\
Strada Costiera 11, 34151, Trieste, Italy}
\email{lxingmail@gmail.com}

\author{Helder Vilarinho}
\address{Universidade da Beira Interior\\
Rua Marqu\^es d'\'Avila e Bolama, 6200-001 Covilh\~a, Portugal}
\email{helder@ubi.pt} \urladdr{http://www.mat.ubi.pt/$\sim$helder}
\thanks{HV was partially supported by National Funds through FCT - ``Funda\c{c}\~{a}o para a Ci\^{e}ncia e a Tecnologia'', project PEst-OE/MAT/UI0212/2011. XL was supported by ICTP}

\keywords{non-uniformly expanding, decay of correlations, random perturbations}

\subjclass[2010]{37A25, 37H99  (Primary) 37C40, 37A50 (Secondary)}



\begin{document}

\begin{abstract}
We consider random perturbations of non-uniformly expanding maps, possibly having a non-degenerate critical set. We prove that, if the Lebesgue measure of the set of points failing the non-uniform expansion or the slow recurrence to the critical set at a certain time, for almost all random orbits, decays in a (stretched) exponential fashion, then the decay of correlations along random orbits is stretched exponential, up to some waiting time. As applications, we obtain almost sure stretched exponential decay of random correlations for Viana maps, as for a class of non-uniformly expanding local diffeomorphisms and a quadratic family of interval maps.

\end{abstract}

\setcounter{tocdepth}{1}

\maketitle

\tableofcontents
\section{Introduction}
One of the interests of smooth ergodic theory is to study iterations of smooth maps $f$ on a Riemann manifold $M$ through measures $\mu$ preserved by $f$, which describes the asymptotic behaviours of typical orbits $\{f^k(x)\}_{k\in \NN}$, i.e., \begin{equation}\label{1}\frac{1}{n}\sum_{k=0}^{n-1}\delta_{f^k(x)}\xrightarrow{{n\to\infty}}\mu \end{equation}
in the weak$^*$ topology. The absolutely continuous (with respect to Lebesgue measure) invariant probability measures are of greatest relevance. If such measures are also ergodic, the statistical prediction given by \eqref{1} holds for a positive Lebesgue measure of initial states $x\in M$. The measures possessing such a rich property are called \emph{SRB} (Sinai-Ruelle-Bowen) \emph{measures}, and were introduced by Sinai, Ruelle and Bowen in \cite{Si72,R76,BR75,Bo75} for Anosov maps and Axiom A attractors.


If $\mu$ is mixing, we have \emph{the decay of correlations} $$\lim_{n\to\infty}\left(\int(\varphi\circ f^n)\psi d\mu-\int\varphi d\mu\int\psi d\mu\right)=0$$ for regular enough observables $\phi,\psi$. We are then interested in to describing how fast the convergence is. Among the available techniques we are particulary concerned to the use of induced schemes, popularized for this purposed by the works of L.-S. Young \cite{Y98,Y99}.

We are going to address random perturbation of dynamical systems, by replacing the original dynamics $f$ by a close map $f_t$, $t\in T$, chosen independently according to some probability law $\theta_{\epsilon}$ in $T$. In a nutshell, the study of correlations for random perturbations can follow two approaches. We could focus on the stationary measure $\mu_\epsilon$, which satisfies $$\int\varphi(f_t(x))\,d\mu_{\epsilon}(x)d\theta_\epsilon(t)=\int\varphi d\mu_\epsilon$$ for every continuous function $\varphi$. In this average ("annealed") setting, the correlation functions are expressed as
$$\int(\varphi\circ (f_{t_{n-1}}\circ\cdots\circ f_{t_0})(x))\psi(x)\, d\mu_\epsilon(x)d\prod_{i=0}^{n-1}d\theta_\epsilon(t_i)-\int\varphi d\mu_\epsilon\int\psi d\mu_\epsilon.$$
Essentially, we are averaging over all possible realizations, which due to the i.i.d. setting can be done by averaging at each time-step. This is the natural setting to formalize the correlations in terms of a Markov chain. Alternatively, we could look for an almost sure approach, by considering the product space $T^\ZZ$ and the usual skew product dynamics $S(\omega,x)=(\sigma(\omega),f_{\omega_0}(x))$ in $T^\ZZ\times M$. We focus now on the invariant probability measures for $S$ that disintegrates as $d\mu_{\omega}(x)d \theta_\epsilon^{\ZZ}(\omega)$. The correlation is expressed in the following fiberwise ("quenched") \emph{future} and  \emph{past random correlations}:
 \begin{align*}
 C_\w^+(\varphi,\psi,\mu, n)&=\left|\int(\varphi\circ f_\w^n)\psi\, d\mu_\w -\int\varphi\, d\mu_{\sigma^n(\w)}\int\psi\, d\mu_\w\right|,\\
 C_\w^-(\varphi,\psi,\mu, n)&=\left|\int(\varphi\circ f_{\sigma^{-n}(\w)}^n)\psi\, d\mu_{\sigma^{-n}(\w)}-\int\varphi\, d\mu_{\w}\int\psi\, d\mu_{\sigma^{-n}(\w)}\right|.
\end{align*}
There are several works dealing either with the annealed approach (e.g., \cite{BaV96,Ba97,BBD14}) or the almost sure (e.g. \cite{BaKS96,Ba97, Bu99,BBM02,AS07}).

This paper concerns to almost sure decay of correlations for random perturbations of non-uniformly expanding (NUE for short) maps. The strategy is to build induced Gibbs-Markov-Young structures with (stretched) exponential decay of recurrence times for random orbits. We use random Young towers and a coupling argument to estimate stretched exponential decay of correlations over the abstract random induced dynamic. This estimates gives rise to the almost sure decay of random correlations.

We give applications to some known families of NUE systems. We present new results respecting to Viana maps, which are a higher-dimensional maps with critical set given in \cite{V97}, and to an open class of local diffeomorphisms given in \cite{ABV00}. We also apply our results to the unimodal maps as considered in \cite{BBM02} to illustrate our strategy under the weaker hypotheses.

The strategy could be used for dynamical systems with other rates of decay (e.g. polynomial), but the hypotheses could be harder to achieve for known examples. Some other questions arise: Is the decay of correlations actually stretched exponential? Does a random central limit theorem hold? Do we have the parallel result in partially hyperbolic attractors admitting a non-uniformly expansion direction? Can we replace $\limsup$ in the definition of random non-uniformly expanding to the weaker assumption $\liminf$, as in the recent work \cite{ADLP}?

This paper is organized as follows. In \S2 we give our basic definitions of random perturbations for non-uniformly expanding and in \S3 we state the main results. We describe the main steps of our strategy in \S4 where we have the principal intermediate results, which are proved in \S6 (existence of random induced structures with stretched exponential decay of return times) and \S7 (decay of correlations for abstract random induced dynamics). The applications to Viana maps, local diffeomorphisms and unimodal interval maps is given at \S5.

\section{Random perturbations for non-uniformly expanding maps}
 Let $M$ be a compact Riemannian manifold endowed with a normalized
volume measure $m$ (Lebesgue measure), and $f\colon M\to M$ be a $C^2$ map. We assume that $f$ is a local diffeomorphism in the whole manifold  except, possibly, in a set $\cc\subset M$ containing the critical points of $f$ and $\partial M$. We say that the set $ \cc $ is {\em non-degenerate} if it has zero Lebesgue measure and there are constants $B>1$ and $\beta>0$ such that for every $x\in M\setminus\cc$
\begin{equation}\label{c1}
\displaystyle{\frac{1}{B}\dist(x,\cc)^{\beta}\leq \frac
{\|Df(x)v\|}{\|v\|}\leq B\dist(x,\cc)^{-\beta}}\,\,\forall v\in T_x
M,
\end{equation}
and, for every
$x,y\in M\setminus \cc$ with $\dist(x,y)<\dist(x,\cc)/2$, we have
\begin{equation}\label{c2}
\displaystyle{\left|\log\|Df(x)^{-1}\|-
\log\|Df(y)^{-1}\|\:\right|\leq
\frac{B}{\dist(x,\cc)^{\beta}}\dist(x,y)}
\end{equation}
\begin{equation}\label{c3}
\displaystyle{\left|\log|\det Df(x)|- \log|\det
Df(y)|\:\right|\leq \frac{B}{\dist(x,\cc)^{\beta}}\dist(x,y)}.
\end{equation}
Roughly speaking, condition \eqref{c1} says that
 $f$  {\em behaves like a power of the distance}
 to $ \cc $, meanwhile \eqref{c2} and \eqref{c3} say that the functions $  \log|\det Df| $ and $ \log \|Df^{-1}\|
$ are \emph{locally Lipschitz} in $ M \setminus \cc$,
with the Lipschitz constant depending on the distance to $\cc$. Given $\delta>0$ and $x\in M\setminus\cc$ we define the
{\em $\delta$-truncated distance\/} from $x$ to $\cc$ as $\dist_\delta(x,\cc)=\dist(x,\cc)$ if $\dist(x,\cc)<\delta$ and
$\dist_\delta(x,\cc)=1$ otherwise.


 The idea we adopt for random perturbations
is to
replace the original deterministic obits
by {\em random orbits} generated by an (independent and
identically distributed) random choice of
map at each iteration. We are interested in systems whose random perturbation exhibit a non-uniform expansive behavior along the orbits generated
 by the successive  composition of random elected maps.
To be more precise we consider a metric space $T$ and a continuous map
\begin{equation*}
 \begin{array}{rccl}
 \Phi
:& T &\longrightarrow&  C^2(M,M)\\
 & t &\longmapsto & \Phi(t)=f_t
 \end{array}
\end{equation*}
such that $f=f_{t^*}$ for some  $t^*\in T$, and a family $(\theta_\epsilon)_{\epsilon>0}$ of Borel probability measures in $T$. We will refer to such a pair $\chi_\epsilon=\{\Phi,(\th_\epsilon)_{\epsilon>0}\}$ as a {\em random
perturbation} of $f$.
We consider the product space $\Omega=T^\ZZ$ endowed with the Borel product probability measure $P=P_\epsilon=\theta_\epsilon^\ZZ$.
 For a {\em realization}
$\omega=(\ldots\omega_{-1},\omega_0,\omega_1,\ldots)\in \Omega$ and $n\geq 1$ we define
\begin{equation*}
f_{\omega}^n(x)=
 (f_{\omega_{n-1}}\circ\dots\circ f_{\omega_1}\circ f_{\omega_0})(x),
\end{equation*}
and set $f_\omega^0(x)=Id_M$. Given $x\in M$ and  $\w\in \Omega$ we call the sequence
$\big(f_\omega^n(x)\big)_{n\in\NN}$ a {\em random
orbit}\index{random orbit} of $x$. Note that
$\w^*=(\ldots,t^*,t^*,\ldots)$ gives rise to the unperturbed
deterministic orbits given by the
original dynamics
$f$.
We define the {\em two-sided skew-product} map
 \begin{equation*}
\begin{array}{rccc} S: & \Omega\times M &\longrightarrow &
\Omega\times M\\
 & (\omega, z) &\longmapsto & \big(\sigma(\omega),f_{\omega_0}(z)\big),
\end{array}
 \end{equation*}
 where $\sigma\colon \Omega \to \Omega$ is the left shift map.
  A Borel probability measure $\mu^*$ in $\Omega\times M$ invariant by  $S$ (in the usual deterministic sense) is characterized by an essentiality unique
disintegration $d\mu^*(\omega,x)=d\mu_{\omega}(x)
d P(\omega)$ given by a family $\{\mu_\w\}_{\w}$ of
{\it sample measures} on $M$ satisfying the quasi-invariance property ${f_{\omega}}_*\mu_\omega=\mu_{\sigma(\omega)}$, and such that for each Borel set $A\subset\Omega\times M$ we have $\mu^*(A)=\int\mu_\w(A_\w)\,d P(\w),$
where  $A_\w=\{x\in M: (\w,x)\in
A\}$.
For a complete introduction on random dynamical systems we refer for \cite{Arn98}.


Consider a random perturbation $\chi_\epsilon$ of a map $f$ such that
$$\supp(\theta_\epsilon)\rightarrow \{t^*\},\quad\text{as}\quad
\epsilon\to 0.$$
Due to the presence of the critical set, we assume that all the maps $f_t$
have the same critical set $\cc$:
 \begin{equation}\label{e.perturbation}
 Df_{t}(x)=Df(x), \quad\mbox{for every $x\in M\setminus\cc$ and $t\in T$}.
 \end{equation}
We could implement this setting, for instance, in parallelizable manifolds (with an
additive group structure, e.g. tori $\mathbb T^d$ (or cylinders $\TT^{d-k}\times
\RR^k$), by considering  $T=\{t\in\RR^d: \|t\|\leq\epsilon_0\}$
for some $\epsilon_0>0$, and
taking $f_t=f+t$, that is, adding at each step a random noise to the
unperturbed
dynamics.

\cde
We say that $f$ is
\emph{non-uniformly expanding on random orbits}  if the following conditions hold, at
least for small $\epsilon>0$:
\begin{enumerate}
  \item  there is ${\alpha}>0$ such that for
$P\times m$ almost every $(\omega,x)\in \Omega\times M$
 \begin{equation}\label{NUEOA}
    \limsup_{n\to{+\infty}}\frac{1}{n}\sum_{j=0}^{n-1}
    \log \|Df_{\sigma^j(\w)}({f_\w^{j}(x))}^{-1}\|<-{\alpha}.
\end{equation}
 \item  given any small ${\gamma} >0$ there is $\delta
>0$ such that for $P\times m$ almost every $(\omega,x)\in \Omega\times
M$
\begin{equation}
 \label{RecLentOA}
\limsup_{n\rightarrow +\infty}\frac{1}{n}
\sum_{j=0}^{n-1}-\log\dist_\delta(f^j_{\omega}(x), \mathcal C)< {\gamma}.
 \end{equation}
\end{enumerate}
 \fde
 When $\cc=\emptyset$
we simply disregard condition \eqref{RecLentOA} and assumption
\eqref{e.perturbation}.
We say that the original map $f$ is a \emph{non-uniformly expanding map} if \eqref{NUEOA} and \eqref{RecLentOA} holds for $\w^*$ and $\leb$ almost every $x$.
We will refer to the second condition by saying that the
random orbits of points have {\em slow recurrence}\index{slow
recurrence} to  $\cc$. Condition \eqref{NUEOA} implies
that for $P$ almost every (a.e.) $\omega\in \Omega$, the
\emph{expansion time} function
\begin{equation*}
\mathcal E_\omega(x) = \min\left\{N\ge1\colon
\frac1n\sum_{j=0}^{n-1} \log \|Df_{\sigma^j(\w)}({f_\omega^{j}(x))^{-1}}\| \leq
-{\alpha}, \text{ for all $n\geq N$}\right\}
\end{equation*}
is defined and finite Lebesgue almost everywhere in $M$.
We notice that condition \eqref{RecLentOA} is not
needed in all its strength, and we just need to ensure that it holds for suitable $\delta, \gamma$ so that the proof of Proposition \ref{l:hyperbolic2} works (see Remark 4.5 in \cite{AV10}). Hence, we may consider ${\gamma}>0$ and
$\delta>0$ such that
for $ P$ a.e. $\omega\in \Omega$ we can  define
the \emph{recurrence time} function Lebesgue almost everywhere in
$ M $:
\begin{equation*}
\mathcal R_\omega(x) = \min\left\{N\ge 1: \frac1n\sum_{j=0}^{n-1}
-\log \dist_\delta(f_\omega^j(x),\cc) \leq {\gamma} , \quad
\text{for all }n\geq N\right\}.
\end{equation*}
We
introduce the {\em tail set (at time
$n$)}\index{tail set}
\begin{equation}\label{tailset}
    \Gamma_\omega^n=\big\{x\in M: \mathcal E_\omega(x) > n \ \text{ or } \ \mathcal R_\omega(x) > n \big\}.
\end{equation}
This is the set of points in $ M $ whose random orbit at time $ n
$ has not yet achieved the uniform exponential growth of
derivative or the slow recurrence given by
conditions~\eqref{NUEOA} and~\eqref{RecLentOA}. If the critical
set is empty, we simply ignore the recurrence time function in the definition of
$\Gamma_\omega^n$.

\section{Main results}
 We assume that $f$ is a topologically transitive non-uniformly expanding
map and non-uniformly expanding on random orbits. We start with the case where we have a uniform (stretched) exponential decay of tail sets.

\cmt\label{thA} Assume there exist
$C, \gamma>0$ and $0<\upsilon\leq 1$ such that $\Leb(\Gamma_\omega^n)<Ce^{-\gamma n^\upsilon}$
for $P$ a.e. $\omega\in\Omega$. Then, if $\epsilon>0$ is small, for some integer $q\geq1$ we have:
\begin{enumerate}
\item[(i)] for $P$ a.e. $\w$ there is an absolutely continuous probability measure $\mu_\w=h_\w d\leb$ satisfying $(f^q_\w)_*\mu_\w=\mu_{\sigma^q(\w)}$;
\item[(ii)]   there exist $ C_i, \gamma_i>0$, $i=1,2$, and for $P$ a.e. $\w$ a positive integer $n_0(\w)$, such that for each Lipschitz function $\psi:M\to\RR$ and every bounded function $\varphi:M\to\RR$
  we have
$$     C_\w^{\pm}(\varphi,\psi,\mu,qn)\leq C_1\sup|\varphi|\lip(\psi)e^{-\gamma_1 n^{\upsilon/2}},\,  \forall\,n\geq n_0(\w)
$$
and
$$       P(\{n_0(\w)>n\})\leq C_2e^{-\gamma_2 n^{\upsilon/2}},\, \forall\,n\geq 1.$$
 \end{enumerate}\fmt
 We can interpret $n_0(\w)$ as the \emph{waiting time} we have to consider before we see the stretched exponential behavior on the estimates of the decay of random correlations.
 In many cases, the estimates on the tail can be hard to achieve, in particular its uniformity over distinct realizations. However, we can state the following.

 \cmt\label{thB} Assume that there exist
$C, \gamma>0$,  $0<\upsilon\leq 1$ and for $P$ a.e. $\w$ a positive integer $g_0(\w)$
 such that
\begin{equation*}
\left\{
\begin{array}{ll}
        \leb(\Gamma_\w^n)\leq Ce^{-\gamma n^\upsilon}, & \forall\,n\geq g_0(\w) \\
         P(\{g_0(\w)>n\})\leq  Ce^{-\gamma n^\upsilon},& \forall\,n\geq 1.
      \end{array}\right.
\end{equation*}
Then, if $\epsilon>0$ is small, for some integer $q\geq1$ we have:
\begin{enumerate}
\item[(i)] for $P$ a.e. $\w$ there is an absolutely continuous probability measure $\mu_\w=h_\w d\leb$ satisfying $(f^q_\w)_*\mu_\w=\mu_{\sigma^q(\w)}$;
\item[(ii)]  there exist $C_i, \gamma_i>0$, $i=1,2$, and for $P$ a.e. $\w$ a positive integer $n_0(\w)$ such that for each Lipschitz function $\psi:M\to\RR$ and every bounded function $\varphi:M\to\RR$
  we have
$$
C_\w^{\pm}(\varphi,\psi,\mu,qn)\leq C_1\sup|\varphi|\lip(\psi)e^{-\gamma_1 n^{\upsilon/4}}, \, \forall\,n\geq n_0(\w), $$
and $$  P(\{n_0(\w)>n\})\leq  C_2e^{-\gamma_2 n^{\upsilon/4}},\, \forall\,n\geq 1.$$
 \end{enumerate}\fmt

\begin{maincorollary}\label{CorolC}  Set $\Gamma^n=\{(\w,x)\in\Omega\times M:x\in\Gamma_\w^n\}$. Assume that there exist
$C, \gamma>0$ and  $0<\upsilon\leq 1$ such that
$$(P\times\leb) (\Gamma^n)<Ce^{-\gamma n^{\upsilon}}.$$
Then the same conclusions of Theorem \ref{thB} hold.
 \end{maincorollary}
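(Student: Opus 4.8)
The plan is to deduce Corollary~\ref{CorolC} directly from Theorem~\ref{thB} by upgrading the integrated tail bound $(P\times\leb)(\Gamma^n)<Ce^{-\gamma n^\upsilon}$ to the fiberwise pair of bounds appearing in the hypothesis of Theorem~\ref{thB}. The only tools needed are Fubini's theorem, Markov's inequality and a Borel--Cantelli argument, at the cost of shrinking the exponential rate (but not the power $n^\upsilon$).

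First, by Fubini, $\int_\Omega \leb(\Gamma^n_\w)\,dP(\w)=(P\times\leb)(\Gamma^n)<Ce^{-\gamma n^\upsilon}$ for every $n\geq1$. Applying Markov's inequality to the nonnegative function $\w\mapsto\leb(\Gamma^n_\w)$ with threshold $e^{-\gamma n^\upsilon/2}$, the sets
$$
A_n=\big\{\w\in\Omega:\ \leb(\Gamma^n_\w)>e^{-\gamma n^\upsilon/2}\big\}
$$
satisfy $P(A_n)\leq Ce^{-\gamma n^\upsilon}/e^{-\gamma n^\upsilon/2}=Ce^{-\gamma n^\upsilon/2}$. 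Since $\sum_nP(A_n)<\infty$, Borel--Cantelli gives that $P$-a.e.\ $\w$ lies in only finitely many $A_n$, so that
$$
g_0(\w):=1+\sup\{n\geq1:\ \w\in A_n\}\qquad(\sup\emptyset:=0)
$$
is a well-defined positive integer for $P$-a.e.\ $\w$, and by construction $\leb(\Gamma^n_\w)\leq e^{-\gamma n^\upsilon/2}$ for all $n\geq g_0(\w)$. For the tail of $g_0$, note $\{g_0(\w)>n\}=\bigcup_{k\geq n}A_k$, hence $P(\{g_0(\w)>n\})\leq\sum_{k\geq n}Ce^{-\gamma k^\upsilon/2}$; using that $k\geq n$ implies $\gamma k^\upsilon/2\geq \gamma n^\upsilon/4+\gamma k^\upsilon/4$, this sum is bounded by $e^{-\gamma n^\upsilon/4}\sum_{k\geq1}Ce^{-\gamma k^\upsilon/4}=:C'e^{-\gamma n^\upsilon/4}$, the series converging since $\upsilon>0$. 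Setting $\tilde\gamma=\gamma/4$ and $\tilde C=\max\{1,C'\}$, both $\leb(\Gamma^n_\w)\leq \tilde Ce^{-\tilde\gamma n^\upsilon}$ for $n\geq g_0(\w)$ and $P(\{g_0(\w)>n\})\leq \tilde Ce^{-\tilde\gamma n^\upsilon}$ for $n\geq1$ hold, which is exactly the hypothesis of Theorem~\ref{thB} with $(C,\gamma)$ replaced by $(\tilde C,\tilde\gamma)$. Its conclusions then apply verbatim.

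I do not expect a genuine obstacle: this is a soft Fubini--Markov--Borel--Cantelli passage. The only point needing a little care is the summation of the stretched-exponential tail $\sum_{k\geq n}e^{-\gamma k^\upsilon/2}$, handled above by the elementary splitting of the exponent, and the bookkeeping of constants; crucially, the rate degrades only in the multiplicative constant in the exponent and not in the power of $n$, so plugging into Theorem~\ref{thB} yields precisely the stated $\upsilon/4$ rate in the correlation estimate.
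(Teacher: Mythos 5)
Your proposal is correct and follows essentially the same route as the paper, which proves Corollary~\ref{CorolC} by deducing the two-sided hypothesis of Theorem~\ref{thB} from the integrated tail bound via exactly the Fubini/Chebyshev split at threshold $\sqrt{Ce^{-\gamma n^\upsilon}}$ (your $e^{-\gamma n^\upsilon/2}$), followed by a Borel--Cantelli definition of $g_0(\w)$ and an invocation of Theorem~\ref{thB} (this is the paper's Lemma~\ref{lemmatoC}). You are if anything slightly more careful than the paper's write-up in explicitly bounding $P(\{g_0>n\})$ by the stretched-exponential tail sum.
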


\section{The strategy: an overview}

For both Theorems \ref{thA} and \ref{thB}, the proof consists in two main steps. First, we prove that the hypothesis on the tail sets imply the existence of induced structures
 with nice decay for the return times $R$. Moreover, as in the deterministic case, we will need a condition of type $\text{gcd} \{R\}=1$ in order to ensure some mixing properties in the induced  dynamics. In view of this, in this greater generality we need eventually to look for some power of the random system. If we are able to construct random induced structures with $\text{gcd} \{\textit{R}\}=1$ then the main results hold with $q=1$; see Remark \ref{r gcd}. We also notice that the hypothesis of transitivity are used for the existence of this suitable induced structures.
 In a second moment we give random versions of the already classic procedures introduced by Young \cite{Y98, Y99}, in order to derive the decay of correlations in the  induced dynamics from the decay of return times, that is later carried to the decay of correlations along random orbits. Corollary \ref{CorolC} is an immediate consequence of Theorem \ref{thB} and Lemma \ref{lemmatoC}.

 We notice that even in the case of uniform exponential decay of the tail set we do not achieve an exponential estimate for the decay of random correlations. The main reason for the damage in the estimates is related to the construction of induced measures, whose density is not bounded from below. Besides in some cases it is not known the optimal result on decay of correlations (even in the deterministic case), if we think for instance in the uniform expanding case it becomes clear that the strategy could compromise the search for optimal estimates.

\subsection{Random induced schemes}

We start by setting the induced Gibbs-Markov-Young (GMY for short) structures for the random orbits.
\cde
We say that \emph{$\w\in \Omega$ \textit{induces a GMY} map $F_\w$ in a
ball $\Delta\subset M$} if: \begin{enumerate}
\item there is a countable
partition $\cp_{\omega}$ of open sets of a full
$\leb$ measure subset $\mathcal{D}_\w$ of
$\Delta$;
\item there is a {\emph{return time function}}
$R_\omega:\mathcal{D}_\w\to\NN$, constant in each
$U_{\omega}\in\cp_{\omega}$;
\item the map $ F_{\omega}(x):=f_\w^{R_\w(x)}(x):\Delta\to\Delta$ verifies:
\begin{enumerate}
\item${ F}_{\omega}\vert_{U_{\omega}}$ is a $C^2$ diffeomorphism onto $\Delta$;
\item there exists $ 0<\kappa_\w<1$ such that for $x$
    in the interior of $U_{\omega}$ $$\|D
 F_{\omega}(x)^{-1}\| <\kappa_\w;$$
\item
    there is some constant $K_\w>0$ such that for every
    $U_{\omega}$ and $x,y\in U_{\omega}$
    \[
    \log\left|\frac{\det D F_{\omega}(x)}{\det D F_{\omega}(y)}\right| \leq
K_\w
    \dist( F_{\omega}(x), F_{\omega}(y)).
    \]
 \end{enumerate}
  \end{enumerate}
\fde

It is known that a deterministic transitive non-uniformly expanding map induces a GMY map $F$ (consider $\w=\w^*$ in the definition above) in some ball.
 The next theorem ensures that almost all realizations
induce a
 GMY map with some uniformity on the constants, and relates the decay of the return times with the the decay of the tail set.

\cte\label{random Markov structure}
Let $f:M\to M$ be a transitive non-uniformly expanding map and non-uniformly expanding on random orbits. There is some ball $\Delta\subset M$ such that if $\epsilon>0$ is small enough
then $P$ a.e. $\omega$ induces a GMY map $F_{\omega}$ in $\Delta$, and
 \begin{enumerate}
 \item[(i)] if there exist $C,\gamma>0$, $0<\upsilon\leq1$ such that
$\Leb(\Gamma_\omega^n)<Ce^{-\gamma n^\upsilon}$ for $P$ a.e. $\omega$,
  then there exist $C_1,\gamma_1>0$ such that $\leb(\{R_\w>n\})\leq C_1e^{-\gamma_1n^\upsilon}$;
%
\item[(ii)] if there exist
$C_i, \gamma_
i>0$, $i=1,2$,  $0<\upsilon\leq 1$ and for $P$ a.e. $\w$ a positive integer $g_0(\w)$
 such that
\begin{equation*}
\left\{
\begin{array}{ll}
        \leb(\Gamma_\w^n)\leq C_1e^{-\gamma_1 n^\upsilon}, & \forall\,n\geq g_0(\w) \\
         P(\{g_0(\w)>n\})\leq C_2e^{-\gamma_2 n^\upsilon},& \forall\,n\geq 1,
      \end{array}\right.
\end{equation*}
 then there exist $C_3, \gamma_3>0$,  $0<\upsilon\leq 1$
  such that
\begin{equation*}
\left\{
\begin{array}{ll}
       \leb(\{R_\w>n\})\leq C_3e^{-\gamma_3 n^\upsilon}, & \forall\,n\geq g_0(\w) \\
         P(\{g_0(\w)>n\})\leq C_2e^{-\gamma_2 n^\upsilon},& \forall\,n\geq 1,
      \end{array}\right.
\end{equation*}
\end{enumerate}

\fte
The proof is given in \S\ref{GMY}, where we can also deduce an induced GMY for $\w^*$ and the following uniformity conditions:
\begin{itemize}
\item[(U$_1$)] Given ${\xi}>0$ and an integer $\hat N>1$, if $\epsilon>0$ is sufficiently small then for $P$ a.e. $\omega$ we have
$$
m\left(\{R_{\omega}=j\}\triangle\{R_{\omega^*}
=j\}\right)\leq
{\xi} $$
 for $j=1,2,\ldots,\hat N$, where $\triangle$ stands for the symmetric difference of two sets.

\item[(U$_2$)] If $\epsilon>0$ is sufficiently small, the constants $K_\w$ and
$\kappa_\w$
for the induced GMY maps can be chosen uniformly over $\w$. We will
refer to them as $K>0$ and $\kappa >0$, respectively.
 \end{itemize}

\subsection{Decay of correlations for random induced schemes}

We start by following \cite{BBM02} and lift the GMY structures to random Young towers over copies of $\Delta$. Letting $\ZZ_+=\{0,1,2,\ldots\}$, set
$$\Delta_\w=\left\{ (x,\ell)\in\Delta\times\ZZ_+ : x\in\mathcal D_{\sigma^{-\ell}(\w)}, 0\leq \ell \leq R_{\sigma^{-j}(\w)}(x)-1 \right\}.$$
We set the $n$th level of the \emph{tower} $\Delta_\w$ as
$\Delta_{\w,\ell} = \Delta_\w \cap \{\ell=n\}$.
The basis of all towers are copies of $\Delta$, and $\ell$th  level $\Delta_{\w,\ell}$ is a copy of $\{x\in\Delta : R_{\sigma^{-\ell}(\w)}>\ell\}$. Moreover, for each basis $\Delta_{\w,0}$ we consider the corresponding partition $\cp_\w$ that can be extended to a partition of the full towers $\Delta_\w$. Moreover we denote by $\mathcal B_\w$ the Borel $\sigma$-algebra of $\Delta_\w$ and  by $\mathcal B$
 the corresponding family of $\sigma$-algebras $\mathcal B_\w$.
We define the  dynamics $F_\w:\Delta_\w\to\Delta_{\sigma(\w)}$  by
$$F_\w(x,\ell)=\left\{\begin{array}{lcl}(x,\ell+1), &\textrm{  if }&
\ell+1<R_{\sigma^{-\ell}(\w)}(x)\\(f_{\sigma^{-\ell}(\w)}^{R_{\sigma^{-\ell}(\w)}}(x),0), &\textrm{  if }& \ell+1=R_{\sigma^{-\ell}(\w)}(x).\end{array}\right.$$
This dynamics carries $(x,\ell)$ in the $\ell$th level of $\Delta_\w$ into $(x,\ell+1)\in\Delta_{\sigma(\w)}$, unless $R_{\sigma^{-\ell}(\w)}(x)=\ell+1$, in which case it falls down into the 0th level of $\Delta_{\sigma(\w)}$ by the return map.
We denote by $\leb_0$ de normalized Lebesgue measure on $\Delta$ and, without risk of confusion, we denote by $\leb$ the lift of this measure on $\Delta_\w$, also dropping the reference to $\w$ in the notation.  We set $\hat\Delta$ for the family $\{\Delta_\w\}_{\w}$ and $\cp$ for the corresponding partition introduced before.
We define, for almost every $\w$, the \emph{separation time} $s_\w:\Delta_\w\times\Delta_\w\to\ZZ_+\cup\{\infty\}$ given by
$$s_\w(x,y)=\min\{n\geq 0: F_\w^n(x)\text{ and } F_\w^n(y) \text{ lie in distinct elements of } \cp\}.$$
We introduce a {Lipschitz}-{type} space of observables $\varphi=\{\varphi_\w\}_\w$ on $\hat\Delta$,
\begin{equation*}
\mathcal{F}_\beta = \left\{\varphi\colon\hat\Delta\to\mathbb{R}\,: \exists\,
C_\varphi>0\,\text{s.t.}\, \left|{\varphi_\w(x)}-{\varphi_\w(y)}\right|\leq C_\varphi \beta^{s_\w(x,y)},\,\,
~~\forall
x,y \in \Delta_\w \right\},
\end{equation*}
a space of densities $\varphi=\{\varphi_\w\}_\w$,
\begin{align*}
\mathcal{F}_\beta^{+} = & \,\big\{ \varphi \in \mathcal{F}_{\beta}
\,: \exists\, \hat C_\varphi>0\mbox{ s.t. on each $U_\w\in\cp_\w $, either $\varphi_\w\vert_{U_\w}\equiv 0$, or}
\\  &
 \phantom{ggsdggggassdfasdfrt}\varphi_\w\vert_{U_\w}>0\textrm{ and }
\left|\log\frac{\varphi_\w(x)}{\varphi_\w(y)}\right|\leq \hat C_\varphi \beta^{s(x,y)},\,\,
~~\forall
x,y \in U_\w \big\},
\end{align*}
and a space of random bounded functions $\varphi=\{\varphi_\w\}_\w$,
$$\mathcal L^\infty =\{\varphi:\hat\Delta\to\mathbb R \,:\, \exists\, \tilde C_\varphi>0\, \textrm{s.t.}\, \sup_{x\in\Delta_\w} |\varphi_\w|\leq \tilde C_\varphi\}.$$
Let us assume the conclusions of Theorem \ref{random Markov structure}, either in the case $(i)$ of uniform estimates on return times or case $(ii)$ of non-uniform rates.

\cte\label{exist.mu.induced} For $P$ a.e. $\omega$ there is an absolutely continuous probability measure $\nu_\w$ on $\Delta_\w$ such that
${(F_\w)}_*\nu_\w=\nu_{\sigma(\w)}$. Moreover, $\rho_{\omega}=d\nu_{\omega}/d\leb\in\mathcal{F}_\beta^+$ and there is a constant $K_1>0$ such that
$\rho_{\omega}\leq K_1$ for $P$ a.e. $\w$.\fte

For the proof see \cite{BBM02, BaBeM10}. Roughly speaking, for each $\omega$ and $n\ge 0$, we consider
 the push-forward $\nu^n_\omega$ of
$\leb_0|\Delta_{\sigma^{-n}(\omega),0}$ by
$F^n_{\sigma^{-n}(\omega)}$.
This push-forward is a probability measure on the tower
$\Delta_\omega$, absolutely continuous with
respect to $m$. One can see (following, for instance, estimates (3.9) in \cite{BBM02}) that the densities $\varphi^{n}_{\omega}$
of the
$\nu^n_\omega$ belong to $\mathcal F^+_\beta$, with constants $C_{\varphi_\omega^n}$ depending only on $K$ and $\kappa$ as in our condition (U$_2$). The hypotheses on the decay of return times imply that
 for $P$ a.e.
 $\omega$ there is
a subsequence $n_\ell \to \infty$
so that $\frac{1}{n_\ell}\sum_{k=0}^{n_\ell-1}\nu^{k}_{\omega}$
converges in the weak$^*$ topology to
a probability measure on $\Delta_\omega$, absolutely continuous with respect
to $m$.
 By a diagonalization argument, for $P$ a.e. $\w$ we can find a sequence $n_j$ such that, for each integer $N$, $\frac{1}{n_j}\sum_{k=N}^{n_j-1} \nu^{k-N}_{\sigma^{-N}(\omega)}
 $
converges to
a probability measure $\nu_{\sigma^{-N}(\omega)}$
on the tower $\Delta_{\sigma^{-N}(\omega)}$ with the desired properties.\\

Let us now define the correlations on $\hat\Delta$. Given a family of measures $\nu=\{\nu_\w\}$ in $\hat\Delta$ and $\varphi, \psi:\hat\Delta\to\mathbb R$, we set the \emph{future correlation} as
\begin{equation*}
\bar C^+_{\w}(\varphi,\psi,\nu,n)=\left|\int_{\Delta_\w}(\varphi_{\sigma^n(\w)}\circ F_\w^n)\psi_\w\,d\nu_{\w} -\int_{\Delta_{\sigma^n(\w)}}\varphi_{\sigma^n(\w)}\,d\nu_{\sigma^n(\w)}\int_{\Delta_\w}\psi_\w\,d\nu_{\w} \right|
\end{equation*}
and, similarly, the \emph{past correlation}
\begin{equation*}
\bar C^-_{\w}(\varphi,\psi,\nu,n)=\left|\int_{\Delta_{\sigma^{-n}(\w)}}\!\!\!(\varphi_{\w}\circ F_{\sigma^{-n}(\w)}^n)\psi_{\sigma^{-n}(\w)}\,d\nu_{\sigma^{-n}(\w)} -\int_{\Delta_{\w}}\!\!\varphi_{\w}\,d\nu_{\w}\!\!\int_{\Delta_{\sigma^{-n}(\w)}}\!\!\!\!\psi_{\sigma^{-n}(\w)}\,d\nu_{\sigma^{-n}(\w)} \right|.
\end{equation*}
We relate now the estimates on return times with the induced decay of correlations.
\cte\label{dacayrate}
Let $\varphi=\{\varphi_\w\}\in\mathcal L^\infty$ and $\psi=\{\psi_\w\}\in\mathcal F_\beta$.
\begin{enumerate}
\item[(i)] If there exist $C_1,\gamma_1>0$ and $0<\upsilon\leq1$ such that
$\Leb(R_\w>n)\leq C_1e^{-\gamma_1 n^\upsilon}$
  then there exist $C_i, \gamma_i>0$, $i=1,2$, and for $P$ a.e. $\w$ a positive integer $n_0(\w)$, such that
\begin{equation*}
\left\{
\begin{array}{ll}
        \bar C^\pm_{\w}(\varphi,\psi,\nu,n)\leq C_2e^{-\gamma_2 n^{\upsilon/2}}, & \forall\,n\geq n_0(\w) \\
         P(\{n_0(\w)>n\})\leq C_3e^{-\gamma_3 n^{\upsilon/2}},& \forall\,n\geq 1.
      \end{array}\right.
\end{equation*}
\item[(ii)] If there exist $C_i, \gamma_i>0$, $i=1,2$, and $0<\upsilon\leq1$, and for $P$ a.e. $\w$ a positive integer $g_0(\w)$, such that
\begin{equation*}
\left\{
\begin{array}{ll}
        \leb(\{R_\w>n\})\leq C_1e^{-\gamma_1 n^\upsilon}, & \forall\,n\geq g_0(\w) \\
         P(\{g_0(\w)>n\})\leq C_2e^{-\gamma_2 n^\upsilon},& \forall\,n\geq 1,
      \end{array}\right.
\end{equation*}
then there exist $C_j, \gamma_j>0$, $j=3,4$, and for $P$ a.e. $\w$ a positive integer $n_0(\w)$ such that
\begin{equation*}
\left\{
\begin{array}{ll}
       \bar
 C^\pm_{\w}(\varphi,\psi,\nu,n)\leq C_3e^{-\gamma_3 n^{\upsilon/4}}, & \forall\,n\geq n_0(\w) \\
         P(\{n_0(\w)>n\})\leq C_4e^{-\gamma_4 n^{\upsilon/4}},& \forall\,n\geq 1.
      \end{array}\right.
\end{equation*}
 \end{enumerate}
\fte
Theorem \ref{dacayrate} is proved in \S\ref{Decay of correlations for random GMY}.

\subsection{Going back to the initial dynamics}
Finally, we transpose the estimates for correlations in the induced dynamics back to the original perturbed system, and finish the proof of Theorems \ref{thA} and \ref{thB}.
 Consider the projection $\pi_\w\colon\Delta_\w\to M$ given by $\pi_\w(x,\ell)=f_{\sigma^{-\ell}(\w)}^\ell(x)$, which satisfies $f_\w\circ\pi_\w=\pi_{\sigma(\w)}\circ F_\w$.   Let $\nu=\{\nu_{\omega}\}$ be a family of absolutely continuous probability  measures given by Theorem \ref{exist.mu.induced}. We could define the family  $\mu=\{\mu_\w\}$ of absolutely continuous sample probability measures on $M$ by $\mu_\w=({\pi_\w})_*\nu_\w$,
Note that if $\varphi:M\to\mathbb R$ is a Lipchitz function then the lifted functions $\tilde\varphi_\w:\Delta_\w\to\mathbb R$ given by $\varphi\circ\pi_\w$ belong to $\mathcal F_\beta$, with $\beta$ depending on $\kappa$  and $C_{\tilde\varphi}$ depending linearly on $\textrm{Lip}(\varphi)$.
  On the other hand, if $\varphi$ is bounded then $\tilde\varphi\in\mathcal L^\infty$ and $\sup_\Delta|\tilde\varphi|\leq \sup|\varphi|$, so that we may consider $\tilde C_{\tilde\varphi}\leq\|\varphi||_\infty$. It is straightforward that the estimates from the induced schemes are similarly translated to the original dynamics since $
C^{\pm}_{\w}(\varphi,\psi,\mu,n)=\bar C^{\pm}_{\w}(\tilde\varphi,\tilde\psi,\nu,n)$.

The proof of Theorems \ref{thA} and \ref{thB} follows now from Theorem \ref{random Markov structure} and Theorem \ref{dacayrate}.
Finally, let us make some considerations on the unicity of the family of sample probability measures. From \cite{AV10} one knows that the topological transitivity of $f$ leads to the unicity
 of the absolutely continuous stationary probability measure. Moreover, the $S$-invariant probability measures characterised by an essentiality unique family of sample probability measures, and they are in a one-to-one correspondence with the stationary probability measures, so that different families of sample probability measures would correspond to different stationary measures.

\section{Applications}

\subsection{Local diffeomorphisms}

We recall a
robust ($C^1$ open) classes of non-uniformly expanding local diffeomorphisms (with no
critical set) introduced in  \cite{ABV00}. The existence and unicity of SRB pro\-ba\-bi\-li\-ty measures for this maps was proved in \cite{ABV00,A03}. This classes of maps  and can be obtained, e.g.
through
deformation of a uniformly expanding map by isotopy inside some
small region. In general, these maps are not uniformly expanding:
deformation can be made in such way that the new map has periodic
saddles. Random perturbations for this maps were considered
in \cite{AAr03,AV10}. We obtain exponential estimates for the decay of correlations along the random orbits. For estimates on  decay of correlations for random perturbations of uniformly expanding maps see \cite{BaKS96}.\\

 Let $M$ be the  $d$-dimensional torus $\mathbb{T}^d$, for some $d\ge 2$, and $m$
  the normalized Riemannian volume form. Let  $f_0\colon M\to M$ be a uniformly expanding map and $V\subset M$ be a small
neighborhood of a fixed point $p$ of $f_0$ so that the restriction of $f_0$ to $V$ is
injective. Consider a $C^1$-neighborhood $ {\mathcal U}$ of $f_0$ sufficiently small so that
any map $f\in {\mathcal U}$ satisfies:
\begin{enumerate}
\item[(\emph{i})] $f$ is {\em expanding outside $V$}: there
exists $\lambda_0<1$ such that
 $$\|Df(x)^{-1}\|< \lambda_0\quad\text{for
every $x\in M\setminus V$};$$
 \item[(\emph{ii})] $f$ is {\em volume expanding
everywhere}\index{expanding!volume}: there exists $\lambda_1>1$
such that
 $$|\det Df(x)| > \lambda_1\quad\text{ for every $x\in M$;}$$

 \item[(\emph{iii})]
 $f$ is {\em not too contracting on~$V$}: there is some
 small $\gamma>0$ such that
$$\|Df(x)^{-1}\|< 1+\gamma\quad\text{ for every $x\in
V$};$$
 \end{enumerate}
 and, moreover, the constants $\lambda_0, \lambda_1$ and $\gamma$ are the same for all
$f\in{\mathcal U}$.

It was shown in \cite{AV10} how to perform the construction a bit more carefully in order to have topologically mixing maps, and thus transitive, by considering a map $\bar f\colon M\to M$ (in the boundary of the set of uniformly expanding maps) which satisfies (\emph{i}), (\emph{ii}) and (\emph{iii}) as the cartesian product of  one-dimensional maps $\varphi_1\times\cdots\times\varphi_d$, with $\varphi_1,\dots,\varphi_{d-1}$ uniformly expanding in $S^1$, and $\varphi_d$ the \emph{intermittent} map in $S^1$: it can be written as
 $$\varphi_d(x)=x+x^{1+\alpha}, \quad \text{for some $0<\alpha<1$,}$$
in a neighborhood of 0  and $\varphi_d'(x)>1$ for every $x\in S^1\setminus\{0\}.$
 If $f$ is in a sufficiently small $C^1$-neighborhood
 $\bar{\mathcal U}$ of $\bar f$, it satisfies (\emph{i}), (\emph{ii}) and (\emph{iii}) for convenient choice of constants
 $\lambda_0$, $\lambda_1$, $\gamma$ and a neighborhood $V$ of the fixed point $p=0\in \TT^d$, and is topologically mixing.

For $f\in{\mathcal U}$ we introduce random perturbations
$\{\Phi,(\theta_{\epsilon})_{\epsilon>0}\}$. In particular,
we consider a
continuous map
  \[
  \begin{array}{rccl}
  \Phi:& T &\longrightarrow&  {\mathcal U}\\
  & t &\longmapsto & f_t
  \end{array}
  \]
 where $T$
is a metric space and $f\equiv f_{t^*}$ for some
$t^*\in T$. Consider
a family
$(\theta_\epsilon)_{\epsilon>0}$ of
probability measures on $T$ such that their supports are non-empty and satisfies $
 \supp(\theta_\epsilon)\rightarrow \{t^*\}$, {when} $\epsilon\to 0$.
 According to \cite{AAr03}, we can choose appropriately the constants
$\lambda_0$, $\lambda_1$ and $\gamma$ so
that every map $f\in{\mathcal U}$ is non-uniformly
expanding on {\em all} random orbits with uniform exponential decay of the Lebesgue measure of the tail sets $\Gamma_\w^n$ given by \eqref{tailset}, ignoring naturally the recurrence time function:
 \begin{Proposition}
 Consider $f\in{\mathcal U}$ and $\{\Phi,(\theta_{\epsilon})_{\epsilon>0}\}$
as before. There exists
$\alpha>0$ such that for every $\w\in \supp(\theta_\epsilon^\NN)$ and $\leb$ a.e. $x\in M$
\begin{equation*}
\limsup_{n\to+\infty}\frac1n\sum_{j=0}^{n-1}
\log\| Df_{\sigma^j(\w)}(f^j_{\w}(x))^{-1}\| \le
-\alpha.\end{equation*}
Moreover, there is $0<\tau<1$ such that
 $m(\Gamma_\w^n)\leq \tau^n$,
for $n\ge 1$ and $\w\in \supp(\theta_\epsilon^\NN)$.
 \end{Proposition}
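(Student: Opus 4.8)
The plan is to derive both assertions from the large-deviations estimates of Alves and Ara\'ujo \cite{AAr03} (a random version of the arguments in \cite{ABV00}), the point being that, once $\epsilon$ is small, every map composed along a random orbit already lies in $\mathcal U$. Indeed $\mathcal U$ is $C^1$-open, $f=f_{t^*}\in\mathcal U$, and $\supp(\theta_\epsilon)\to\{t^*\}$ as $\epsilon\to0$, so there is $\epsilon_0>0$ such that for $0<\epsilon<\epsilon_0$ we have $f_t\in\mathcal U$ for every $t\in\supp(\theta_\epsilon)$; hence for every $\w\in\supp(\theta_\epsilon^\NN)$ each $f_{\w_j}$ satisfies \emph{(i)}, \emph{(ii)} and \emph{(iii)} with the \emph{same} constants $\lambda_0<1$, $\lambda_1>1$, $\gamma>0$ and the same neighbourhood $V$ of $p$.

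First I would bound the expansion along a random orbit by the time spent in $V$. With $S_n(\w,x)=\#\{0\le j<n:f_\w^j(x)\in V\}$, conditions \emph{(i)} (on $M\setminus V$) and \emph{(iii)} (on $V$) give
\begin{equation*}
\frac1n\sum_{j=0}^{n-1}\log\big\|Df_{\sigma^j(\w)}(f_\w^j(x))^{-1}\big\|\ \le\ \Big(1-\tfrac{S_n(\w,x)}{n}\Big)\log\lambda_0+\tfrac{S_n(\w,x)}{n}\log(1+\gamma),
\end{equation*}
and the right-hand side is increasing in $S_n/n$. Since $\log\lambda_0<0$, fix $\theta>0$ so small that $2\alpha:=-\big[(1-\theta)\log\lambda_0+\theta\log(1+\gamma)\big]>0$; then $S_n(\w,x)\le\theta n$ forces the average above to be $\le-2\alpha<-\alpha$. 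As $\cc=\emptyset$ here, $\Gamma_\w^n=\{x:\mathcal E_\w(x)>n\}$, and $\mathcal E_\w(x)>n$ entails $\frac1m\sum_{j<m}\log\|\cdots\|>-\alpha$ for some $m\ge n$, which by the above implies $S_m(\w,x)>\theta m$. Hence
\begin{equation*}
\Gamma_\w^n\ \subseteq\ \bigcup_{m\ge n}A_m^\w,\qquad A_m^\w:=\big\{x\in M:S_m(\w,x)>\theta m\big\}.
\end{equation*}

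The crux is a uniform exponential bound $m(A_m^\w)\le C_1\zeta^m$ with $C_1>0$, $\zeta<1$ independent of $\w$. To obtain it I would cover $A_m^\w$ by the sets prescribing the $V/(M\setminus V)$ itinerary of $f_\w^0(x),\dots,f_\w^{m-1}(x)$, retaining only the $\binom mk$ itineraries with $k>\theta m$ visits to $V$. Using the volume expansion \emph{(ii)}, the bounded distortion afforded by \emph{(i)}--\emph{(iii)}, and the injectivity of the $f_t|_V$, the Lebesgue measure of the set following a fixed such itinerary is at most $C_1(L\,m(V))^k$ for uniform constants $C_1,L$; summing, $m(A_m^\w)\le C_1\sum_{k>\theta m}\binom mk (L\,m(V))^k$, and a binomial/Chernoff tail estimate makes this $\le C_1\zeta^m$ with $\zeta<1$ as soon as $L\,m(V)$ is small relative to $\theta$. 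As in \cite{AAr03}, the neighbourhood $V$ and the constants $\lambda_0,\lambda_1,\gamma$ can be arranged simultaneously so that \emph{(i)}--\emph{(iii)} hold and $m(V)$ is that small; since the whole estimate only uses that each composed map lies in $\mathcal U$ with common constants, it is uniform in $\w$. I expect this itinerary-counting-plus-distortion step -- the heart of \cite{ABV00} transported to random orbits in \cite{AAr03} -- to be the only substantial obstacle; the rest is bookkeeping.

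Finally, summing the geometric series yields $m(\Gamma_\w^n)\le C_1\zeta^n/(1-\zeta)$ for every $\w\in\supp(\theta_\epsilon^\NN)$; shrinking $\theta$ (hence $\zeta$) further so that $C_1/(1-\zeta)\le\zeta^{-1/2}$ gives $m(\Gamma_\w^n)\le\zeta^{\,n-1/2}\le\tau^n$ with $\tau:=\sqrt\zeta<1$, for all $n\ge1$ -- the second assertion. For the first, $\sum_m m(A_m^\w)<\infty$, so Borel--Cantelli gives, for every $\w\in\supp(\theta_\epsilon^\NN)$ and $m$-a.e.\ $x$, that $S_n(\w,x)\le\theta n$ for all large $n$; by the second paragraph this yields $\limsup_{n\to+\infty}\frac1n\sum_{j=0}^{n-1}\log\big\|Df_{\sigma^j(\w)}(f_\w^j(x))^{-1}\big\|\le-2\alpha\le-\alpha$ with $\alpha$ as fixed above, completing the proof.
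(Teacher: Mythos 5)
The paper does not actually prove this Proposition: it is stated as a citation to Alves--Ara\'ujo \cite{AAr03} (``According to \cite{AAr03}, we can choose appropriately the constants\ldots''), and no proof is given in the text. Your proposal is therefore a reconstruction of the argument in \cite{AAr03}, which is in turn a random version of the large-deviations scheme in \cite{ABV00}. The route you take --- (1) for small $\epsilon$ all $f_t$, $t\in\supp(\theta_\epsilon)$, lie in $\mathcal U$ with common $\lambda_0,\lambda_1,\gamma,V$; (2) split $\frac1n\sum_j\log\|Df^{-1}\|$ by the fraction of time in $V$; (3) reduce $\Gamma_\w^n$ to the excess sets $A_m^\w=\{S_m(\w,\cdot)>\theta m\}$; (4) bound $m(A_m^\w)$ uniformly in $\w$ by a counting-plus-distortion argument --- is exactly the approach that the cited reference carries out, so you are on the same path.

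Two caveats. First, your bound ``$m$ of the set following a fixed itinerary $\le C_1(L\,m(V))^k$'' is an oversimplification: the itinerary set relative to $V$ is not a cylinder of a Markov partition, and \cite{ABV00,AAr03} have to work with a genuine auxiliary partition into injectivity domains of $f^m_\w$, using volume expansion and bounded distortion on each branch, then count branches realizing a given $V$-itinerary. You flag this as the ``only substantial obstacle,'' which is fair, but the displayed inequality should not be taken at face value without that extra structure. Second, and more concretely, the last cleanup step --- ``shrinking $\theta$ (hence $\zeta$) further so that $C_1/(1-\zeta)\le\zeta^{-1/2}$'' --- has the monotonicity reversed. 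Decreasing the threshold $\theta$ makes the event $\{S_m>\theta m\}$ \emph{larger}, so the decay rate $\zeta$ gets \emph{worse}, not better; to improve $\zeta$ one increases $\theta$ (up to the critical value where $(1-\theta)\log\lambda_0+\theta\log(1+\gamma)=0$) or shrinks $m(V)$. In fact you do not need to tune anything here: once you have the uniform bound $m(\Gamma_\w^n)\le C\zeta^n$ with $C,\zeta$ independent of $\w$, together with the trivial monotonicity $m(\Gamma_\w^{n+1})\le m(\Gamma_\w^n)$ and a uniform bound $m(\Gamma_\w^1)\le c<1$ (which the same argument gives at $n=1$), a standard absorption of the multiplicative constant produces a single $\tau\in(0,1)$ with $m(\Gamma_\w^n)\le\tau^n$ for all $n\ge1$. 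With that correction, your reconstruction matches the cited argument.
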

As application of Theorem \ref{thA} we have the following.
\cte  Let $f\in\bar{\mathcal U}$. Then, for some integer $q\geq1$ and all sufficiently small $\epsilon>0$:
\begin{enumerate}
\item[(i)] for $P$ a.e. $\w$ there is an absolutely continuous probability measure $\mu_\w=h_\w d\leb$ satisfying $(f^q_\w)_*\mu_\w=\mu_{\sigma^q(\w)}$;
\item[(ii)]   there exist $ C_i, \gamma_i>0$, $i=1,2$, and for $P$ a.e. $\w$ a positive integer $n_0(\w)$, such that for each Lipschitz function $\psi:M\to\RR$ and every bounded function $\varphi:M\to\RR$
  we have
$$     C_\w^{\pm}(\varphi,\psi,\mu,qn)\leq C_1\sup|\varphi|\lip(\psi)e^{-\gamma_1 \sqrt n},\,  \forall\,n\geq n_0(\w)
$$
and
$$       P(\{n_0(\w)>n\})\leq C_2e^{-\gamma_2 \sqrt n},\, \forall\,n\geq 1.$$

 \end{enumerate}

\fte

\subsection{Viana maps}

We consider now an important open class of non-uniformly expanding maps with
critical sets in higher
dimensions introduced in \cite{V97}.
Without loss of generality we discuss the
two-dimensional
case and we refer \cite{V97,AV02,AAr03} for details.\\

Let $p_0\in(1,2)$ be such that the
critical point
$x=0$ is pre-periodic for the quadratic map $Q(x)=p_0-x^2$. Let
$S^1=\RR/\ZZ$ and $b:S^1\rightarrow \RR$ be a Morse function, for
instance, $b(s)=\sin(2\pi s)$. For fixed small $\alpha>0$,
consider the map
 $$ \begin{array}{rccc} \hat f: & S^1\times\RR
&\longrightarrow & S^1\times \RR\\
 & (s, x) &\longmapsto & \big(\hat g(s),\hat q(s,x)\big)
\end{array}
 $$
 where $\hat g$ is the uniformly expanding
map of the circle defined by $\hat{g}(s)=ds$ (mod $\ZZ$) for some
$d\ge16$, and $\hat{q}(s,x)=a(s)-x^2$ with $a(s)=p_0+\alpha b(s)$. As it is shown in \cite{AAr03}, it
is no restriction to assume that $\cc=\{(s,x)\in S^1\times I\colon
x=0\}$ is the critical set of $\hat f$  and we do so.
If
$\alpha>0$ is small enough there is an
interval $I\subset (-2,2)$ for which $\hat f(S^1\times I)$ is
contained in the interior of $S^1\times I$. Any map $f$
sufficiently close to $\hat f$ in the $C^3$ topology has
$S^1\times I$ as a forward invariant region (in fact, here it suffices to be
$C^1$
close). We consider a small $C^3$ neighborhood $\mathcal V$ of $\hat f$ as
before and will refer to maps in $\mathcal V$ as {\em Viana maps}. Thus, any
{\em Viana map} $f\in\mathcal V$ has $S^1\times I$ as a forward invariant region,
and so an {\em attractor} inside it, which is precisely $$\Lambda=\bigcap_{n\geq
0}f^n(S^1\times I).$$
In \cite[Theorem C]{AV02} it was proved
a topological mixing property.
\cpr
For every $f\in\mathcal V$ and every open set $A\subset
S^1\times I$ there is some $n_A\in\NN$ for which $f^{n_A}(A)=\Lambda$.
\fpr

 We introduce the random perturbations $\{\Phi,(\theta_\epsilon)_\epsilon\}$ for this maps. We set
   $T\subset\mathcal V$
to be a $C^3$
neighborhood of $\hat{f}$ consisting in maps $f$ restricted to the
forward invariant region
$S^1\times I$ for which
$Df(x)=D\hat f(x)$ if $x\notin\mathcal C$, the map $\Phi$ to be the identity map at $T$ and
$(\theta_\epsilon)_{\epsilon}$ a family of Borel measures on $T$ such that their supports are non-empty and satisfy $
 \supp(\theta_\epsilon)\rightarrow \{f\}$, {when} $\epsilon\to 0$, for  $f\in T$. In \cite{AAr03} the authors realized that \emph{Viana maps} are non-uniformly expanding and non-uniformly expanding on random orbits, and that there exist $C, \gamma>0$ such that
$\leb(\Gamma_\w^n)<Ce^{-\gamma\sqrt n}$, for almost every $\w\in\supp(\theta_\epsilon^\NN)$. We may conclude the following from Theorem \ref{thA}.

 \cte
  Let $f\in\mathcal V$ be a Viana map. Then, for some integer $q\geq1$ and all sufficiently small $\epsilon>0$:
\begin{enumerate}
\item[(i)] for $P$ a.e. $\w$ there is an absolutely continuous probability measure $\mu_\w=h_\w d\leb$ satisfying $(f^q_\w)_*\mu_\w=\mu_{\sigma^q(\w)}$;
\item[(ii)]   there exist $ C_i, \gamma_i>0$, $i=1,2$, and for $P$ a.e. $\w$ a positive integer $n_0(\w)$, such that for each Lipschitz function $\psi:M\to\RR$ and every bounded function $\varphi:M\to\RR$
  we have
$$     C_\w^{\pm}(\varphi,\psi,\mu,qn)\leq C_1\sup|\varphi|\lip(\psi)e^{-\gamma_1 n^{1/4}},\,  \forall\,n\geq n_0(\w)
$$
and
$$       P(\{n_0(\w)>n\})\leq C_2e^{-\gamma_2 n^{1/4}},\, \forall\,n\geq 1.$$

 \end{enumerate}
  \fte

\subsection{Unimodal maps} We consider now random perturbations for a class of unimodal maps as in \cite{BBM02}. In that paper the authors construct directly the induced structures with non-uniform decay of return times, meanwhile we are going to check the hypothesis of Theorem \ref{thB}. The improvements in the stretched exponential rates for the decay of random correlations as compared with \cite{BBM02} are not relevant, and the main motivation is to illustrate our techniques in a case where we are not in conditions to obtain uniform estimates for the decay of the tail sets along random orbits. In \cite{BaV96} the authors consider a similar family of unimodal maps, for which obtain uniform exponential decay of correlations with respect to the stationary measure, in contrast to our almost sure results.\\

We start by recalling the setting and refer for \cite{BBM02} for details. Let $I = [L,R]$ be a compact interval containing $0$ in its interior and $f \colon I\to I$ be a $C^2$ unimodal
map ($f$ is increasing on $[L, 0]$ and decreasing on $[0,R]$) satisfying: $f''(0)\neq0$,  $\sup_I|f'| < 8$, and
\begin{itemize}
\item[(H1)] There are $0<\alpha<1$ and $1<\lambda\leq 4$ with $200\alpha<(\log \lambda)^2$ for which
\begin{enumerate}
\item [(i)] $|(f^n)'(f(0))| \geq \lambda^n$, for all $n\geq0$.
\item [(ii)] $|f^n(0)| \geq e^{-\alpha n}$, for all $n\geq0$.
\end{enumerate}
\item[(H2)] For each small enough $\delta>0$, there is $N =N(\delta) \geq 0$ for which
\begin{enumerate}
\item [(i)] If $x,\ldots, f^{N-1}(x)\notin(-\delta,\delta)$ then $|(f^N)'(x)| \geq \lambda^N$.
\item [(ii)] For each $n$, if $x,\ldots,f^{n-1}(x)\notin(-\delta,\delta)$ and $f^n(x)\in(-\delta, \delta)$, then $|(f^n)'(x)| \geq \lambda^n$.
\end{enumerate}

\item[(H3)] $f(I)$ is a subset of the interior of $I$.
\item[(H4)] $f$ is topologically mixing on $[f^2(0), f(0)]$.
\end{itemize}
Examples of unimodal maps satisfying this hypothesis are the quadratic maps $f_a(x)=a-x^2$ for a positive
Lebesgue measure set of (\emph{Benedicks-Carleson}, \cite{BeC85}) parameters $a$.

Fixing $\epsilon_0 > 0$ small enough to guarantee $f(x)\pm\epsilon_0\in I$ for all $x\in I$, we assume that we are
given a constant $D >0$ and for each $0 < \epsilon < \epsilon_0$ a probability measure $\theta_\epsilon$ on $T=T_\epsilon=[-\epsilon, \epsilon]$ such that for
any subinterval $J\subset T$,
\begin{equation}\label{q dens}
\theta_\epsilon(J) \leq \frac{D|J|}\epsilon
\end{equation}
Assumption \eqref{q dens} may be relaxed, but it cannot be completely suppressed since there are open intervals of parameters
corresponding to periodic attractors arbitrarily close to $a$. Assumption \eqref{q dens} holds for instance if $\theta_\epsilon$ has a
density with respect to Lebesgue which is bounded above by $D/\epsilon$. We stress that this does not imply that $0$
belongs to the support of $\theta_\epsilon$. We consider $\Omega=\Omega_\epsilon = T^\mathbb{Z}$, $P=P_\epsilon=\theta_\epsilon^\mathbb{Z}$ and for $t\in T$ we set $\Phi(t)=f_t(x)=f(x)+t$. We assume that $f$ is a transitive non-uniformly expanding and non-uniformly expanding (with slow recurrence to the critical set $\mathcal C=\{0\}$) and notice that it holds for $f_a$ with \emph{Benedicks-Carleson} parameter $a$; see \cite{F05}. We will see that we are in conditions to apply Theorem \ref{thB} to get the following result.

\cte\label{quadratic}
For some integer $q\geq1$, if $\epsilon>0$ is small:
\begin{enumerate}
\item[(i)] for $P$ a.e. $\w$ there is an absolutely continuous probability measure $\mu_\w=h_\w d\leb$ satisfying $(f^q_\w)_*\mu_\w=\mu_{\sigma^q(\w)}$;
\item[(ii)]  there exist $C_i, \gamma_i>0$, $i=1,2$, and for $P$ a.e. $\w$ a positive integer $n_0(\w)$ such that for each Lipschitz function $\psi:I\to\RR$ and every bounded function $\varphi:I\to\RR$
  we have
$$
C_\w^{\pm}(\varphi,\psi,\mu,qn)\leq C_1\sup|\varphi|\lip(\psi)e^{-\gamma_1 n^{1/4}}, \, \forall\,n\geq n_0(\w), $$
and $$  P(\{n_0(\w)>n\})\leq  C_2e^{-\gamma_2 n^{1/4}},\, \forall\,n\geq 1.$$
  \end{enumerate}
\fte

\subsubsection{Non-uniform expansion and slow recurrence} In \cite[\S7.1 and \S7.2]{BBM02} the authors followed some ideas from \cite{V97} and also \cite{A00} in order to have some estimates on the recurrence near the critical set for random orbits of points in the interval. We are going to discuss how to translate those estimates to our framework.
 Consider $\eta>0$ such that $$\frac{2\alpha}{\log \lambda}<\eta<\frac1{10}.$$
For $r\in\mathbb Z_+$ let $I_r=(\sqrt\epsilon e^{-r},\sqrt\epsilon e^{-(r-1)})$, and $I_r=-I_{|r|}$ for $r\leq-1$. For $k\geq 1$ we introduce the functions $r_k\colon \Omega\times I\to\mathbb Z_+$, by
setting $r_k(\w,x)= |r|$ if $f_\w^k(x)\in I_r$ and $r_k(\w,x)=0$ otherwise, and sets
$$G_k(\w,x) = G_k^\epsilon(\w,x) =\left\{
0 \leq j \leq k :  r_j(\w,x) \geq \max\left\{1,\left(\frac12-2\eta\right)\log(1/\epsilon)\right\}\right\}.$$
There are suitably small $c>0$ and large $C>1$ such that for sufficiently small $\epsilon>0$, large enough $n\gg C\log(1/\epsilon)$ and all $(\w,x)$ for which
\begin{equation}\label{hyp times}\sum_{ j\in G_m^\epsilon(\w,x)}r_j(\w,x)\leq cn,\end{equation}
we have $|(f_\w^n)'(x)|>e^{n/C}$. From \cite[Corollary 7.5]{BBM02} we have the following.

\cle\label{7.4 e 7.5}
There are $C(\epsilon)>1$, $\gamma(\epsilon) > 1/(C \log(1/\epsilon))$, and for each $n \geq 1$ there are sets $E_n\subset\Omega\times I$ with
$(P\times\leb)(E_n)\leq C(\epsilon)e^{-\gamma(\epsilon)n}$, such that if $(\w,x)\notin E_n$ then condition \eqref{hyp times} holds.
\fle
From Lemma \ref{7.4 e 7.5} we have $\sum_{n\geq1} (P\times\leb) (E_n)<\infty$, and by Borel-Cantelli's lemma
$$(P\times\leb)\left(\bigcap_{n\geq1}\bigcup_{k\geq n} E_k\right)=0.$$
This means that for $(P\times\leb)$ a.e. $(\w,x)$ condition \eqref{hyp times} holds for every $n$ large enough, implying that $|(f_\w^n)'(x)|>e^{n/C}$, and thus that $f$ is non-uniformly expanding on random orbits. Moreover, given $\zeta>0$ if we set $$E_n^\zeta=\left\{(\w,x):\sum_{0\neq j\in G_m^\epsilon(\w,x)}r_j(\w,x)\geq \zeta n,\right\},$$
then, for small $\zeta>0$, $(P\times\leb
)(E_n^\zeta)\leq C(\epsilon)e^{-\gamma(\epsilon)n}$. If we take $\delta=(1/2-2\eta)\log(1/\epsilon)$, then, for $(\w,x)\notin E_n^\zeta$ we have
$$\frac1n\sum_{j=0}^{n-1}-\log\dist_\delta(f_\w^j(x),\mathcal C)\leq\zeta .$$
Proceeding as before, we may conclude that $f$ has slow recurrence to the critical set on random orbits. The tail sets can be considered as $\Gamma_\w^n=\{x:(\w,x)\in E_n\cup E_n^\zeta\}\subset I$. The rate of decay of the Lebesgue measure of this sets is estimated to be exponential, but not uniform on $\w$: by Fubini's theorem there exists $C=C(\epsilon), \gamma=\gamma(\epsilon)>0$ such that
\begin{equation*}
(P\times\leb)(\Gamma^n)\leq Ce^{-\gamma n},
\end{equation*}
 where $\Gamma^n=\{(\w,x):x\in\Gamma_\w^n\}$. With the following lemma we are in conditions to apply Theorem \ref{thB} and conclude Theorem \ref{quadratic}.

\cle\label{lemmatoC}
If there exist $C, \gamma >0$ and $0<\upsilon\leq1$ such that for all $n\geq 1$
\begin{equation*}
(P\times\leb) (\Gamma^n)<Ce^{-\gamma n^{\upsilon}}
\end{equation*} then there exist $C_i, \gamma_i>0$, $i=1,2$, and for $P$ a.e. $\w$ a positive integer $g_0(\w)$ such that
\begin{equation*}
\left\{
\begin{array}{ll}
        \leb(\Gamma_\w^n)\leq C_1e^{-\gamma_1 n^{\upsilon}}, & \forall\,n\geq g_0(\w) \\
         P(\{g_0(\w)>n\})\leq C_2e^{-\gamma_2 n^{\upsilon}},& \forall\,n\geq 1.
      \end{array}\right.
\end{equation*}
\fle

\begin{proof}
By Fubini's theorem,
\begin{equation}\label{fubini}
P\left(\left\{\w:\leb(\Gamma_\w^n)>\sqrt{Ce^{-\gamma n^\upsilon}}\right\}\right)\leq \sqrt{Ce^{-\gamma n^\upsilon}},
\end{equation}
otherwise we are lead into a contradiction: $(P\times\leb)(\Gamma^n)=\int_\Omega\leb(\Gamma_\w^n)\,dP>{Ce^{-\gamma n^\upsilon}}$.
Set
\begin{equation*}
B_n=\left\{\w:\exists m\geq n\,\, \text{s.t.} \,\, \leb(\Gamma_\w^m)>\sqrt{Ce^{-\gamma m^\upsilon}}\right\}.
\end{equation*}
By \eqref{fubini}, we have
\begin{equation*}
P(B_n)\leq\sum_{k=n}^\infty P\left(\left\{\w:\leb(\Gamma_\w^k)>\sqrt{Ce^{-\gamma k^\upsilon}}\right\}\right)<\infty,
\end{equation*}
so that $\lim_{n\to\infty} P(B_n)=0$. For $\w$ in the $P$ full measure subset $\cup_n(\Omega\setminus B_n)$ of $\Omega$ we define
\begin{equation*}
g_0(\w)=\min\{ n\geq 0: \w\notin B_n\}.
\end{equation*}
\end{proof}

\section{Random Gibbs-Markov-Young structures}\label{GMY}

In this part we get the random GMY structure for the random NUE system. We prove Theorem \ref{random Markov structure} in \S\ref{s.tail}. We firstly derive uniform expansion and bounded distortion, then we simulate the GMY structure given in \cite{AL13} for partially hyperbolic attractors with non-uniformly expanding direction. Fix $B>1$ and
$\beta>0$ as in the definition of the critical set $\mathcal C$, and take a constant $b>0$ such that $2b <
\min\{1,\beta^{-1}\}$.
\cde
For $0<\lambda<1$, $\delta>0$, we call $n$ a
{\em $(\lambda,\delta)$-hyperbolic time} for  $(\omega, x)\in \Omega\times M$ if for all $1\le k \le n$
$$
\prod_{j=n-k+1}^{n}\|Df_{\sigma^j(\omega)}(f^j_{\omega}(x))^{-1}\| \le \lambda^k,
{\quad\text{and}\quad} \dist_{\delta}(f^{n-k}_{\omega}(x),\mathcal C)\ge\lambda^{bk}.
$$
\fde
In the case of $\cc=\emptyset$ the definition of
$(\lambda,\delta)$-hyperbolic time reduces to the first condition. Hyperbolic times were introduced in \cite{A00} for deterministic
systems
and extended in \cite{AAr03} to a random context. We recall the following results from \cite{AV10}.

\cpr\label{p.hypreball}
Given $\lambda<1$ and $\delta>0$, there exist $\delta_1, C_0>0$ only depending on $\lambda, \delta$ and $f$, such that, if $n$ is $(\lambda,\delta)$-hyperbolic time for $(\omega, x)\in \Omega\times M$, then there is a neighbourhood $V^n_{\omega}(x)$ of $x$ in $M$ s.t.:
\begin{enumerate}
\item $f^{n}_{\omega}$ maps $V^n_{\omega}(x)$ diffeomorphically onto $B(f^{n}_{\omega}(x),\delta_1)$;
\item for every $y\in V^n_{\omega}(x)$ and $1\le k\le n$ we have $\|Df^k_{\sigma^{n-k}(\omega)}(f_{\omega}^{n-k}(y))^{-1}\|\le\lambda^{k/2};$
\item for every $1\le k
\le n$ and $y, z\in V^n_{\omega}$, $$ \dist(f_{\omega}^{n-k}(y),f_{\omega}^{n-k}(z)) \le
\lambda^{k/2}\dist(f_{\omega}^{n}(y),f_{\omega}^{n}(z));$$
\item(Bounded Distortion) for any $y, z\in V^n_\omega(x)$,
 $$ \log\frac{|\det Df_{\omega}^n (y)|}{|\det Df_{\omega}^n (z)|}
 \le C_0\dist(f_{\omega}^{n}(y),f_{\omega}^{n}(z)).
 $$

\end{enumerate}
\fpr
We call the sets $ V^n_\omega$ as \emph{hyperbolic
pre-balls} and $ B(f^{n}_{\omega}(x),\delta_1)=f_\w^{n}(V^n_\omega) $
\emph{hyperbolic balls}. We recall now the random version of the positive frequency for points admitting hyperbolic times.
\cpr\label{l:hyperbolic2}
    There exist $0<\lambda<1$, $\delta>0$ and \( 0<\zeta\le 1 \) such that for every $\omega\in\Omega$ and every $x\in M$ with $\mathcal E_{\omega}(x)\le n$ and $\mathcal R_{\omega}(x)\le n$,  there exist $(\lambda,\delta)$-hyperbolic times $1\le n_1<\dots<n_l\le n$ for $(\omega, x)$ with $l\ge\zeta n$.
\fpr

For technical reasons, more precisely in Item \eqref{l.in V_n} of Lemma~\ref{l.PP}, we shall take $\delta'_1=\frac{\delta_1}{12}>0$ and consider $V'^n_{\w}(x)$ the part of $V^n_\omega(x)$ which is sent by $f^n_{\w}$ onto $B(f^n_{\w}(x),\delta'_1)$. The sets $V'^n_{\w}(x)$  will also be called hyperbolic pre-balls.
The next lemma is an immediate consequence from \cite[Lemma 2.4, 2.5]{ALP05}; see also \cite[Lemma 4.14]{AV10}.

\cle\label{l.N0q}
There are $\delta_0>0$, a point $p\in M$  and $N_0\in\NN$ s.t., if $\epsilon$ is sufficiently small, for any hyperbolic pre-ball $V'^n_{\w}(x)$ and every $\omega\in\supp(P)$ there exists $0\le m\le N_0$ for which $\Delta=B(p,\delta_0)\subset f^{n+m}_{\w}(V'^n_{\w}(x))$ and $f^{n+m}_{\w}(V'^n_{\w}(x))$ is disjointed from the critical set $\mathcal{C}$. Moreover, there are $D_0$, $K_0$ such that, for every $\omega\in\supp(P)$ we have
\begin{enumerate}
\item for each $x, y\in V$
$$
\log\left|\frac{\det Df_{\omega}^m(x)}{\det Df_{\omega}^m(y)}\right|\leq D_0
\dist(f_{\omega}^m(x), f_{\omega}^m(y));
$$
\item for each $0\leq j\leq m$
and for all $x\in f_{\omega}^{j}(V)$ we have
$$
K_{0}^{-1}\leq \|Df_{\omega}^{j}(x)\|, \|(Df_{\omega}^{j}(x))^{-1}\|, |\det
Df_{\omega}^{j}(x)|\leq K_0;$$
in particular  \( f_{\omega}^{j}(V)\cap \cc = \emptyset \) .
\end{enumerate}

\fle

In the following, we fix the two disks centered at $p$
$$\Delta^0=\Delta=
B(p,\delta_{0}){\quad\text{and}\quad}  \Delta^1= B(p,2\delta_{0}).
$$
We actually need $\Delta^1=B(p,2\delta_0)\subset f^{n+m}_{\w}(V'^n_{\w}(x))$.

\subsection {The auxiliary partition}
We construct the random Markov partition $\mathcal P_{\omega}$ on the reference disk $\Delta$ found in the previous section, for each $\omega\in\Omega$. Basically, it is a random version of the construction in \cite{AL13}. For $\omega\in \Omega$, $n\ge 1$, we define
 $$
 H^n_{\omega}=\{x\in M\colon \text{ $n$ is a $(\lambda,\delta)$-hyperbolic time for
 $(\omega, x)$ }\}.
 $$
We set $\Delta^c=M\setminus\Delta$. Given a point $x \in H^n_{\w}\cap\Delta$, there is a hyperbolic pre-ball $V'^n_{\w}(x)$, and for $0\le m\le N_0$ as in Lemma~\ref{l.N0q}, we define
\begin{equation}\label{candidate}
U_{n,m }^{i, x} = (f^{n+m}_\w|_{V'^{n}_{\omega}(x)})^{-1}(\Delta^{i}),\quad i=0,1.
\end{equation}
These $U^{0,x}_{n,m}$ are the candidates for $\cp_\w$ with uniform expansion and
bounded distortion of $f^{n+m}_\w$. Notice that the recurrence time is given by $$R_\w(x)=n+m\quad \text{for}\quad x\in U^{0,x}_{n,m}.$$
We introduce sets $\Delta^n_{\omega}$ and $S^n_{\omega}$: $\Delta^n_{\omega}$ is the part of $\Delta$ that has not been chosen until time $n$;  $S^n_{\omega}$, the
\emph{satellite set}, is to make sure we may collect the remaining part of the hyperbolic pre-ball when the $n$-step's elements of partition have been taken, more precisely, to get $$H^n_{\w}\cap\Delta\subset S^n_{\omega}\bigcup \{R_{\omega}\leq n+N_0\}.$$
 At each step of the algorithm there is a unique hyperbolic time and possibly several return times.
For $k\ge n$, we construct the \emph{annulus} around the element $U^n_{\omega}=U_{n,m}^{0,x}$
\begin{equation}\label{eq.Annulus}
A^k_{\omega}(U^n_{\omega})=\{y\in V^n_{\omega}(x):0\le\dist(f_{\omega}^{R_\w(U^n_{\omega})}(y),\Delta)\le \delta_0\lambda^{\frac{k-n}{2}}\}.
\end{equation}
Obviously $$A^n_{\omega}(U^n_{\omega})\cup U^n_{\omega}=U_{n,m}^{1,x}.$$

\subsubsection*{First step of induction}
Given the initial time \(R_0\in\mathbb{N} \) and consider the dynamics after time
$R_0$ (can be taken independent of $\omega$); to be determined in Section
 \ref{sub.ebd} this paragraph we omit $\epsilon$ in $R_0$. There are finitely many points
$I_{R_0}=\{z_1,\ldots,z_{N_{R_0}}\}\in H^{R_0}_{\omega}\cap\Delta$ such that
$$H^{R_0}_{\omega}\cap \Delta\subset V^{'R_0}_{\omega}(z_1)\cup\dots\cup
V^{'R_0}_{\omega}(z_{N_{R_0}}).$$
We take a maximal family of pairwise disjoint sets of type (\ref{candidate}) contained in
$\Delta$,
$$
\{U_{R_0,m_0}^{1,x_0}, U_{R_0,m_1}^{1,x_1},\ldots,
U_{R_0,m_{k_{R_0}}}^{1,x_{k_{R_0}}}\},
$$
where $\{x_0,\ldots x_{k_{R_0}}\}\subset I_{R_0}$.

And set
$$
\mathcal U^{R_0}_{\omega}=\{U_{R_0,m_0}^{0,x_0},
U_{R_0,m_1}^{0,x_1},\ldots,
U_{R_0,m_{k_{R_0}}}^{0,x_{k_{R_0}}}\}
$$
Now we get the elements of the partition $\cp_\w$ at $R_0$-step.

The recurrence time is $R_\w(U^{0,x_i}_{R_0,m_i})=R_0+m_i$ with
$0\leq i \leq k_{R_0}$.
Recalling \eqref{eq.Annulus}, we define $$A^{R_0}_{\omega}(\mathcal U^{R_0}_{\omega})=\bigcup_{U_\w\in\mathcal U^{R_0}_{\omega}}A^{R_0}_{\omega}(U_\w).$$
We pay attention to the sets $\{U^{1,z}_{R_0,m}: z\in
I_{R_0},0\le m\le N_0\}$ which intersect $\mathcal U^{R_0}_{\omega}\cup A^{R_0}_{\omega}(\mathcal U^{R_0}_{\omega})$ or $\Delta^c$.
Given $U_\w\in \mathcal U^{R_0}_{\omega}$, for each $0\le m\le N_0$,
we define
\begin{equation*}
    I_{R_0}^m(U_\w)=\left\{x \in I_{R_0}: U^{1,x}_{R_0,m}\cap (U_\w\cup A^{R_0}_{\omega}(U_\w)) \neq\emptyset\right\},
\end{equation*}
the \emph{$R_0$-satellite} around $U_\w$ is
\begin{equation*}
    S^{R_0}_{\omega}(U_\w)=\bigcup_{m=0}^{N_0}\bigcup_{x\in
    I_{R_0}^m(U_\w)}V^{'R_0}_{\omega}(x)\cap (\Delta\setminus
    U_\w),
\end{equation*}
The union
\begin{equation*}
S^{R_0}_{\omega}(\Delta) = \bigcup_{U_\w\in
\mathcal U^{R_0}_{\omega}}{S}^{R_0}_{\omega}(U_\w).
\end{equation*}
Similarly, the $R_0$-satellite for $\Delta^c$ is
\begin{equation*}
    S^{R_0}_{\omega}(\Delta^c)= \bigcup_{m=0}^{N_0}\bigcup_{U^{1,x}_{R_0,m}\cap\Delta^c\neq\emptyset}V^{'R_0}_{\omega}(x)\cap \Delta.
\end{equation*}
We will show in the general step, the  volume of
$S^{R_0}_{\omega}(\Delta^c)$ is exponentially small.
The `global' $R_0$-satellite is
\begin{equation*}
S^{R_0}_{\omega} = \bigcup_{U_\w\in \mathcal U^{R_0}_{\omega}}{S}^{R_0}_{\omega}(U_\w) \cup
S^{R_0}_{\omega}(\Delta^c).
\end{equation*}
The remaining portion at step $R_0$ is
\begin{equation*} \Delta^{R_0}_{\omega} = \Delta\setminus
\mathcal U^{R_0}_{\omega}.
\end{equation*}
Clearly,
\begin{equation*}
H^{R_0}_{\omega}\cap\Delta\subset S^{R_0}_{\omega}\cup \mathcal U^{R_0}_{\omega}.
\end{equation*}

\subsubsection*{General step of induction}
The general step of the construction follows the ideas in the first step with
minor modifications. We assume $U^j_{\w}, S^j_{\w}, A^j_{\w}, \Delta^j_{\w},\{R_{\w}=j+m\}$ are defined for all $0\le j\le n-1$.  As before, there is a finite set of points
$I_{n}=\{z_1,\ldots,z_{N_{n}}\}\in  H_\w^n\cap\Delta$ such that
$$ H^n_{\omega}\cap\Delta\subset
V'^n_{\omega}(z_1)\cup\dots\cup V'^n_{\omega}(z_{N_{n}}).$$ We get $\mathcal U^i_{\omega}, A^i_{\omega}$ and $S^i_{\omega}$ for $i\le n-1$.
Assuming
$$\mathcal U^{\ell}_{\omega}=\{U_{\ell,m_0}^{0,x_0}, U_{\ell,m_1}^{0,x_1},\ldots,
U_{\ell,m_{k_{\ell}}}^{0,x_{k_{\ell}}}\}
$$
for $ R_0\le\ell\le n-1$, let
$$A^n_{\omega}(\mathcal U^{\ell}_{\omega})=\cup_{U_\w\in\mathcal U^{\ell}_{\omega}}A^n_{\omega}(U_\w).$$
We get a maximal family of pairwise disjoint sets of type \eqref{candidate} contained in
$\Delta^{n-1}_{\omega}$,
$$ \{U_{n,m_0}^{1,x_0},
U_{n,m_1}^{1,x_1},\ldots, U_{n,m_{k_n}}^{1,x_{k_n}}\}$$ where $\{x_0,\ldots,x_{k_n}\}\subset I_n$,
satisfying
$$ U_{n,m}^{1,x_i}\cap\left(
\cup_{\ell=R_0}^{n-1}\{A^n_{\omega}(\mathcal U^{\ell}_{\omega})\cup\mathcal U^{\ell}_{\omega}\}\right)=\emptyset,\quad
i=0,\dots,k_n,$$
and define
$$\mathcal U^n_{\omega}=\{U_{n,m_0}^{0,x_0},
U_{n,m_1}^{0,x_1},\ldots,
U_{n,m_{k_{n}}}^{0,x_{k_{n}}}\}.$$
They are the $n$-step's elements of the partition $\cp_{\omega}$.

For $0\leq i \leq \ell_n$, $x\in U^{0,x_i}_{n,m_i}$, $R_{\omega}(x)=n+m_i$.
Given $U_\w\in \mathcal U^{R_0}_{\omega}\cup\cdots\cup \mathcal U^n_{\omega}$, $ 0\leq
m\leq N_0 $,
\begin{equation*}
    I_{n}^m(U_\w)=\left\{x \in I_{n}: U^{1,x}_{n,m}\cap
    (U_\w\cup A^n_{\omega}(U_\w))
\neq\emptyset\right\},
\end{equation*}
  define
\begin{equation*}
S^n_{\omega}(U_\w)= \bigcup_{m=0}^{N_0}\bigcup_{x\in
    I_{n}^m(U_\w)}V'^n_{\omega}(x)\cap (\Delta\setminus U_\w)
\end{equation*}
and
\begin{equation*}
    S^n_{\omega}(\Delta)= \bigcup_{U_\w\in \mathcal U^{R_0}_{\omega}\cup\cdots\cup\,
\mathcal U^n_{\omega}}{S}^n_{\omega}(U_\w).
\end{equation*}
Similarly, the \emph{$n$-satellite} associated to $\Delta^c$ is
\begin{equation*}
    S^n_{\omega}(\Delta^c)= \bigcup_{m=0}^{N_0}\bigcup_{U^{1,x}_{n,m}\cap\Delta^c\neq\emptyset}V'^n_{\omega}(x)\cap\Delta.
\end{equation*}

\cre\label{lastremark}
By Proposition~\ref{p.hypreball},
$$
{S}^n_{\omega}({\Delta^c})\subset \{x \in \Delta:\,
\dist(x,\partial\Delta)\le 2\delta_0\lambda^{n/2}\}.
$$
So there exists $\rho>0$ such that
$
\leb ({S}^n_{\omega}({\Delta^c}))\leq \rho\lambda^{n/2}.
$
\fre

Finally we define the \emph{$n$-satellite} for $\mathcal U^{R_0}_{\omega}\cup\cdots\cup\, \mathcal U^n_{\omega}$
\begin{equation*}
{S}^n_{\omega} = S^n_{\omega}(\Delta) \cup {S}^n_{\omega}(\Delta^c)
\end{equation*}
and
\begin{equation*}
\Delta^n_{\omega}= \Delta\setminus \bigcup_{i=R_0}^{n}\mathcal U^i_{\omega}.
\end{equation*}
Obviously
\begin{equation}\label{Hdel_n}  H^n_{\omega}\cap\Delta\subset
{S}^n_{\omega}\cup\bigcup_{i=R_0}^{n}\mathcal U^i_{\omega}.
\end{equation}

\subsection {Expansion, bounded distortion and uniformity}\label{sub.ebd}
The return time $R_{\omega}$ for an element $U_{\omega}$
of the partition $\cp_{\omega}$ of $\Delta$ is made by a hyperbolic time $n$ plus $m\leq N_{0}$. We know $f_{\omega}^{n+m}(V'^n_{\omega})$ covers $\Delta$ completely. Then by Proposition~\ref{p.hypreball} and Lemma~\ref{l.N0q},

\begin{eqnarray*}
  \|Df_{\omega}^{n+m}(x)^{-1}\| &\le& \|Df_{\sigma^n(\w)}^{m}(f_{\omega}^{n}(x))^{-1}\|.
\|Df_{\omega}^{n}(x)^{-1}\| \\
   &\le &K_{0}\lambda^{n/2}\\
   &\le& K_{0}\lambda^{(R_0-N_{0})/2}.
\end{eqnarray*}
If we take $R_0$ sufficiently large, this is smaller than some $\kappa<1$. We also need to show that there exists a constant
$K > 0$ such that for any $x, y\in U_{\omega}$
with return time $R_{\omega}$, we have
$$\log\left|\frac{\det Df_{\omega}^{R_{\omega}}(x)}{\det Df_{\omega}^{R_{\omega}}(y)}\right|\leq K \dist(f_{\omega}^{R_{\omega}}(x),
f_{\omega}^{R_{\omega}}(y)).$$ By Proposition~\ref{p.hypreball} and
Lemma~\ref{l.N0q}, we choose $K = D_{0}+
C_{0}K_{0 }$.

Since $K_0$, $C_0$, $D_0$ and $N_0$ are independent
of $\omega$ then $\kappa$ and $K$ could be taken the same for all $\w$, leading us to condition (U$_2$). Moreover, by the continuity of $\Phi$ in the random perturbation
$\{\Phi,\{\theta_\epsilon\}_{\epsilon>0}\}$, the algorithm provides partitions such that for any two realizations $\w$, ${\w'}$ in
 $\Omega$ and any natural number $\hat N$, the Lebesgue measure of the symmetric difference of
sets $\{R_{\w}=j\}$ and $\{R_{\w'}=j\}$, for
 $j=1,\ldots, \hat N$, is smaller than any given ${\xi}>0$, as long as we take $\epsilon$ sufficiently
 small. Since we do not assume a particular behavior for the decay of the tail set for the deterministic dynamics given by $f$, we are not able to conclude the (stretched) exponential decay of the corresponding return times. However we can construct a partition for
 for the original dynamics $f$ (given by the realization $\w^*$) and consider it as reference to construct the elements of each partition $\mathcal P_\w$ with return time lower than $\hat N$,
 obtaining condition (U$_1$). This is of great utility to ensure condition ($\star$); see Remark \ref{r gcd}.

\subsection{Tail set estimates}\label{s.tail}
In this section we will show that if the tail set decays (stretched) exponentially fast, then the tail of the recurrence times decays (stretched) exponentially fast too.
More precisely, given a local unstable disk $\Delta\subset M$ and constants $\gamma>0$ and $0<\upsilon\le 1$, there is $\gamma_1>0$ such that
\begin{equation}\label{eq.goaltail}
\leb\{\Gamma^n_{\w} > n\} =\mathcal O(e^{-\gamma n^{\upsilon}})\quad\Rightarrow\quad
\leb\{R_{\w}>n\}\leq\mathcal {O}(e^{-\gamma_1n^{\upsilon}}).
\end{equation}
This is case $(i)$ of Theorem \ref{random Markov structure}. Case $(ii)$ follows in the same way.

Before the key proof (proof for Proposition \ref{Z}). We state some lemmas and notations for preparing.
To simplify the notation, we avoid the superscript 0 in $U^{0,x}_{n,m}$.
The next lemma and proposition are the random versions of \cite[Lemma 3.5, Proposition 3.6]{AL13}.

\cle
\begin{enumerate}
\item There is $C_5>0$, for any $n\ge R_0$, $0\leq m\leq N_0$ and finitely many $\{x_1,\dots ,x_N\}\in I_n$ satisfying
$U_{n,m}^{x_i}=U_{n,m}^{x_1}$ ($1\le i\le N$), we get
\begin{equation*}
    \leb\left(\bigcup_{i=1}^{N} V'^n_{\omega}(x_i)\right)\leq C_5\leb(
    U^{x_1}_{n,m}).
   \end{equation*}
\item There is $C_6> 0$, for $k\ge R_0$, $U_{\omega}\in \mathcal U^k_{\omega}$ and $0\le m\le N_0$, any $n\ge k$, we have
\begin{equation*}
\leb \left(\bigcup_{x \in I_n^m(U_{\omega})}U_{n,m}^x\right)\leq
C_6\lambda^{\frac{n-k}{2}} \leb(U_{\omega}).
\end{equation*}
\end{enumerate}
\fle

 \cpr \label{d.prop.Sn}
There is $C_7>0$ such that $\forall\, U_{\omega}\in\mathcal U_{\omega}^k$, and $n\geq
k$, we get
\begin{equation*}
\leb(S^{n}_{\omega}(U_{\omega})) <C_7\lambda^{\frac{n-k}{2}}\leb(U_{\omega}).
\end{equation*} \fpr

\cde
Given $k\ge R_0$ and  $U^x_{k,m}\in \mathcal U_{\omega}^k$, $x\in \Delta$ and $0\le m\le N_0$, for $n\ge k$ we define
 $$B_n^k(x)=S^{n}_{\omega}(U_{k,m}^{x})\cup
U_{k,m}^{x}\quad\text{and}\quad t(B_n^k(x))=k.$$ Here
$k$ and $n$ are hyperbolic times for points in $\Delta$. We call $U^x_{k,m}$ the \emph{core} of $B_n^k(x)$
and sign it $C(B_n^k(x))$.
\fde
From Lemma~\ref{d.prop.Sn} we easily get:  $\forall n\ge k$ and $x$, we have
 $$\leb(B_n^k(x))\le
(C_7+1)\leb(C(B_n^k(x))).$$
The dependence of $\delta_1'$ on $\delta_1$ is clarified in the next lemma. The proof is similar with \cite[Lemma 3.9, 3.10]{AL13}.

\begin{Lemma}\label{l.PP}
\begin{enumerate}
\item\label{l.in V_n}
If $\delta_1'>0$ is sufficiently small (only depending on $\delta_1$), for all $k'\ge k\ge R_0$, $n\ge k$, $n'\ge k'$ and $B_n^k(x)\cap
B_{n'}^{k'}(y)\neq\emptyset$, we have $$C(B_n^k(x))\cup
C(B_{n'}^{k'}(y))\subset V^k_{\omega}(x);$$

\item
there exists $P\ge N_0$ such that for all $R_0\leq t_1\le t_2$,
$B_{t_2+P}^{t_2}(y)\cap B_{t_2+P}^{t_1}(x)=\emptyset.$
\end{enumerate}
\end{Lemma}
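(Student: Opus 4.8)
The plan is to deduce both items from the elementary geometry of the hyperbolic pre-balls, using throughout two remarks. First, by Proposition~\ref{p.hypreball}(3) applied with the backward step equal to the full hyperbolic time one has $\operatorname{diam}V'^N_\omega(w)\le 2\delta_1'\lambda^{N/2}$ and $\operatorname{diam}V^N_\omega(w)\le 2\delta_1\lambda^{N/2}$. Second, whenever $N$ is at least the hyperbolic time $k$ of a core $U=U^{x}_{k,m}$, the annulus shrinks, $A^N_\omega(U)\subset A^k_\omega(U)$, so by \eqref{eq.Annulus} and the identity $A^k_\omega(U)\cup U=U^{1,x}_{k,m}$ we get $A^N_\omega(U)\cup U\subset U^{1,x}_{k,m}\subset V'^k_\omega(x)$, while $f^{k+m}_\omega$ maps $A^N_\omega(U)\cup U$ onto $\{z:\dist(z,\Delta^0)\le\delta_0\lambda^{(N-k)/2}\}=B(p,\delta_0(1+\lambda^{(N-k)/2}))\subset\Delta^1$.

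For item (1) I would first show that $B^k_n(x)$ is contained in a small neighbourhood of the \emph{inner} pre-ball $V'^k_\omega(x)$: its core lies in $V'^k_\omega(x)$ by construction, and every satellite pre-ball $V'^n_\omega(v)$ entering $S^n_\omega(U^{x}_{k,m})$ meets $U^{x}_{k,m}\cup A^n_\omega(U^{x}_{k,m})\subset V'^k_\omega(x)$ (second remark, $n\ge k$) and has diameter $\le 2\delta_1'\lambda^{n/2}\le 2\delta_1'\lambda^{k/2}$ (first remark); hence $B^k_n(x)$ lies in the $2\delta_1'\lambda^{k/2}$-neighbourhood of $V'^k_\omega(x)$, and likewise $B^{k'}_{n'}(y)$ lies in the $2\delta_1'\lambda^{k'/2}$-neighbourhood of $V'^{k'}_\omega(y)$, which, since $k'\ge k$, is inside the $2\delta_1'\lambda^{k/2}$-neighbourhood. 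If $B^k_n(x)\cap B^{k'}_{n'}(y)\ne\emptyset$, then $V'^{k'}_\omega(y)$, a set of diameter $\le 2\delta_1'\lambda^{k/2}$, and in particular its core $C(B^{k'}_{n'}(y))$, lies in the $6\delta_1'\lambda^{k/2}$-neighbourhood of $V'^k_\omega(x)$. The remaining, and technically most delicate, point is that this neighbourhood is still contained in the outer pre-ball $V^k_\omega(x)$ once $\delta_1'$ is small relative to $\delta_1$: one works in the linearising chart $f^k_\omega\colon V^k_\omega(x)\to B(f^k_\omega(x),\delta_1)$, in which $V'^k_\omega(x)$ corresponds to the concentric disk of radius $\delta_1'$, uses the bounded distortion Proposition~\ref{p.hypreball}(4) to control $f^k_\omega$ at the relevant scale, and checks that the slack $\delta_1-\delta_1'$ — for which $\delta_1'=\delta_1/12$ is comfortably sufficient — absorbs the $6\delta_1'\lambda^{k/2}$-collar uniformly in $k\ge R_0$. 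This is the verification of \cite[Lemma~3.9]{AL13}, and it is exactly here that the quantitative dependence of $\delta_1'$ on $\delta_1$ is pinned down.

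For item (2) I would argue by contradiction. Assume $B^{t_2}_{t_2+P}(y)\cap B^{t_1}_{t_2+P}(x)\ne\emptyset$ with $R_0\le t_1\le t_2$. By item (1) with $k=t_1$, $k'=t_2$, $n=n'=t_2+P$, the later core $U^{y}_{t_2,m_y}=C(B^{t_2}_{t_2+P}(y))$ lies in $V^{t_1}_\omega(x)$. On the other hand, the step-$t_2$ selection rule forces $U^{1,y}_{t_2,m_y}$, hence $U^{y}_{t_2,m_y}$, to be disjoint from $A^{t_2}_\omega(U^{x}_{t_1,m_x})\cup U^{x}_{t_1,m_x}=\{w\in V^{t_1}_\omega(x):\dist(f^{t_1+m_x}_\omega(w),\Delta^0)\le\delta_0\lambda^{(t_2-t_1)/2}\}$, so $f^{t_1+m_x}_\omega(U^{y}_{t_2,m_y})$ stays at distance $>\delta_0\lambda^{(t_2-t_1)/2}$ from $\Delta^0$. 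At the same time, since $U^{y}_{t_2,m_y}$ meets $B^{t_1}_{t_2+P}(x)=U^{x}_{t_1,m_x}\cup S^{t_2+P}_\omega(U^{x}_{t_1,m_x})$ but, by the previous sentence, not the core $U^{x}_{t_1,m_x}$, it meets a satellite pre-ball $V'^{t_2+P}_\omega(v)$ touching $U^{x}_{t_1,m_x}\cup A^{t_2+P}_\omega(U^{x}_{t_1,m_x})$ and of diameter $\le 2\delta_1'\lambda^{(t_2+P)/2}$; thus $U^{y}_{t_2,m_y}$ comes within $4\delta_1'\lambda^{(t_2+P)/2}$ of $U^{x}_{t_1,m_x}$. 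Transporting this last closeness forward by $f^{t_1+m_x}_\omega$ — controlled by Proposition~\ref{p.hypreball} on the first $t_1$ iterates and by Lemma~\ref{l.N0q} on the final $\le N_0$ — one sees that $f^{t_1+m_x}_\omega(U^{y}_{t_2,m_y})$ must come within $O(\lambda^{(t_2+P)/2})$ of $\Delta^0$, which for $P$ large enough (and $\ge N_0$) is strictly smaller than the lower bound $\delta_0\lambda^{(t_2-t_1)/2}$, a contradiction. I expect this transport-and-compare step to be the main obstacle: it must be organised so that the required $P$ is genuinely uniform in $\omega,t_1,t_2$, which is precisely the delicate bookkeeping carried out in \cite[Lemma~3.10]{AL13} and which I would import, adapting it to the random pre-balls via Proposition~\ref{p.hypreball} and the uniform constants of \S\ref{sub.ebd}.
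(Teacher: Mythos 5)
The paper does not give a self-contained proof of this lemma; it only points to \cite[Lemma 3.9, 3.10]{AL13}, so I will assess your reconstruction on its own terms. Your overall plan is the right one (nest the cores into the larger pre-ball and then use the step-$t_2$ selection rule together with the annulus to produce a contradiction), and your two preliminary remarks are correct, including the inclusion $A^n_\omega(U)\cup U\subset U^{1,x}_{k,m}\subset V'^k_\omega(x)$ for $n\ge k$. The problems are in how you carry out the two key inclusions.

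For item~(1), the sentence ``this neighbourhood is still contained in the outer pre-ball $V^k_\omega(x)$'' is the step that fails. You are trying to show that the $6\delta_1'\lambda^{k/2}$-neighbourhood \emph{in $M$} of $V'^k_\omega(x)$ sits inside $V^k_\omega(x)$, but Proposition~\ref{p.hypreball}(2)--(3) only give an \emph{upper} bound $\dist(z,z')\le\lambda^{k/2}\dist(f^k_\omega(z),f^k_\omega(z'))$ for $z,z'\in V^k_\omega(x)$; there is no complementary lower bound on $\|Df^k_\omega\|$ (and hence none on $\dist(V'^k_\omega(x),\partial V^k_\omega(x))$), since the NUE setting only bounds $\|Df^{-1}\|$ and $\log|\det Df|$, not $\|Df\|$. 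If $f^k_\omega$ expands by a factor much larger than $\lambda^{-k/2}$, the gap $\dist(V'^k_\omega(x),\partial V^k_\omega(x))$ can be far smaller than $\delta_1'\lambda^{k/2}$, so the collar need not stay inside $V^k_\omega(x)$. Bounded distortion of the Jacobian determinant (Proposition~\ref{p.hypreball}(4)) does not repair this in dimension $\ge 2$, because it constrains $\det Df^k_\omega$, not $\|Df^k_\omega\|$. The argument that does work avoids $M$-space neighbourhoods entirely and goes directly through the chart: a satellite pre-ball $V'^n_\omega(z)$ touching $V'^k_\omega(x)$ at a point $w_0$ satisfies $\operatorname{diam}\bigl(f^k_\omega(V'^n_\omega(z))\bigr)\le 2\delta_1'\lambda^{(n-k)/2}\le 2\delta_1'$ by Proposition~\ref{p.hypreball}(3) applied to $V^n_\omega(z)$, and $f^k_\omega(w_0)\in B(f^k_\omega(x),\delta_1')$, hence $f^k_\omega(V'^n_\omega(z))\subset B(f^k_\omega(x),3\delta_1')\subset B(f^k_\omega(x),\delta_1)$; since the inverse branches of $f^k_\omega$ through $V^n_\omega(z)$ and through $V^k_\omega(x)$ agree at $f^k_\omega(w_0)$, one gets $V'^n_\omega(z)\subset V^k_\omega(x)$ directly, with no lower bound on $\dist(V'^k_\omega(x),\partial V^k_\omega(x))$ required. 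Iterating once more from a touching point of $V'^{k'}_\omega(y)$ gives $V'^{k'}_\omega(y)\subset V^k_\omega(x)$, whence the cores, and the threshold $\delta_1'=\delta_1/12$ indeed leaves room. So the \emph{conclusion} you want is available, but your route to it is not.

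For item~(2), the phrase ``since $U^y_{t_2,m_y}$ meets $B^{t_1}_{t_2+P}(x)$'' is not justified: the hypothesis only gives $B^{t_2}_{t_2+P}(y)\cap B^{t_1}_{t_2+P}(x)\neq\emptyset$, and the common point may very well lie in the satellite part $S^{t_2+P}_\omega(U^y_{t_2,m_y})$ rather than in the core $U^y_{t_2,m_y}$. The proof has to compare the two bounds on $\dist\bigl(f^{t_1+m_x}_\omega(w),\Delta^0\bigr)$ at the common point $w$ itself, using on one side that $w\in B^{t_1}_{t_2+P}(x)$ (which forces $f^{t_1+m_x}_\omega(w)$ to be within $O(\lambda^{(t_2+P-t_1)/2})$ of $\Delta^0$), and on the other side that $w$ is within $O(\lambda^{(t_2+P)/2})$ of $U^{1,y}_{t_2,m_y}$, which by the step-$t_2$ selection rule is disjoint from $A^{t_2}_\omega(U^x_{t_1,m_x})\cup U^x_{t_1,m_x}$ and is entirely inside $V^{t_1}_\omega(x)$ (this last inclusion being exactly what the corrected item~(1) supplies), whence $\dist\bigl(f^{t_1+m_x}_\omega(\cdot),\Delta^0\bigr)>\delta_0\lambda^{(t_2-t_1)/2}$ on it. Transporting the closeness of $w$ to $U^{1,y}_{t_2,m_y}$ forward by $f^{t_1+m_x}_\omega$ and comparing the two bounds then yields the contradiction for $P$ large. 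This is essentially the comparison you sketch in the final sentences, but it must be anchored at the common point $w$ rather than at an assumed intersection of the core $U^y_{t_2,m_y}$ with $B^{t_1}_{t_2+P}(x)$, and it relies on the corrected form of item~(1) to know that the inequality coming from the annulus is applicable on all of $U^{1,y}_{t_2,m_y}$.
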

Now we come to the core of this section: to show \eqref{eq.goaltail}, i.e. Theorem \ref{random Markov structure}. Recalling Remark \ref{lastremark}, similarly, there exists a constant $\rho >0$ such
that for all $n\in\mathbb N$
\begin{equation}\label{bdy2}
\leb\{x :
\dist(x,\partial\Delta)\leq 2\delta_0\lambda^{\frac{\theta n}{4}}\}\leq
\rho\lambda^{\frac{n}{2}}.
\end{equation} Recalling $\Delta^n_{\omega}$ is the
complement part at step $n$, $\theta$ is defined in
Proposition~\ref{l:hyperbolic2}. We will show $\leb(\Delta^n_{\omega})$ decays (stretched) exponentially.
That is enough to conclude the proof since $\leb(\Gamma_\w^n)$ is (stretched) exponentially small and
$\leb(\{x:\dist(x,\partial\Delta)\leq 2\delta_0\lambda^{\frac{\theta n}{4}}\})$
decays exponentially as in \eqref{bdy2}.

Take $x\in\Delta^n_{\omega}$, suppose $x\notin \Gamma_\w^n\cup\{x:\dist(x,\partial\Delta)\leq 2\delta_0\lambda^{\frac{\theta n}{4}}\}$.
By Proposition \ref{l:hyperbolic2}, for $n$ large, $x$ has at least $\theta n$
hyperbolic times between $1$ and $n$, such that we have $\frac{\theta n}{2} \le t_1<\dots< t_k\le n$, $k\ge\frac{{\theta}n}{2}$. We get $x\in H^{t_i}_{\omega}\cap\Delta$ for $1\le i\le k$.
Recalling \eqref{Hdel_n},
\begin{equation*}
 H^{t_i}_{\omega}\cap\Delta\subset S^{t_i}_{\omega}\cup\bigcup_{j=R_0}^{t_i}\mathcal U^j_{\omega}, \quad \text{for } 1\le i\le k.
\end{equation*}
If $x\notin S^{t_i}_{\omega}$, $x\in\cup_{j=R_0}^{t_i}\mathcal U^j_{\omega}$ such that
$x\notin\Delta^n_{\omega}$. That is a contradiction. So $x\in S^{t_i}_{\omega}$.
Since $x\in\{x\in\Delta:\dist(x,\partial\Delta)> 2\delta_0\lambda^{\frac{{\theta}n}{4}}\}$,
$x\in H^{t_i}_{\omega}\cap \{x\in\Delta:
\dist(x,\partial\Delta)> 2\delta_0\lambda^{t_i/2} \},$ for $1\le i\le k.$
With Remark \ref{lastremark}, we obtain
$x\notin S^{t_i}_{\omega}(\Delta^c)$.
Consequently, $x\in S^{t_i}_{\omega}(\Delta),$ for $i=1,\dots,k.$ We simply
take $k=\frac{{\theta}n}{2}$. Thus, $x$ is contained in
$$Z_\w\left(\frac{\theta n}{2},n\right):=\bigg\{x:\exists t_1<\ldots<t_{\frac{\theta n}{2}}\leq
n, x\in \bigcap_{i=1}^{\frac{\theta
n}{2}}{S^{t_i}_{\omega}(\Delta)}\bigg\}\cap\Delta^n_{\omega}.$$
So we have
$$\Delta^n_{\omega} \subset \Gamma_{\w}^n\cup \{x\in\Delta: \dist(x,\partial\Delta)
\leq 2\delta_0\lambda^{\frac{{\theta}n}{4}}\}\cup Z_\w({\theta n}/2,n).$$
See the first set in the union above decays exponentially fast from the assumption of Theorem~\ref{random Markov structure}; the second set in the union above is exponentially small by \eqref{bdy2}. In the following, we only need to show the measure of
$Z_\w({\theta n}/2,n)$ is exponentially small. That is Proposition~\ref{Z} .

Observe that if we have shown there exist $C_1, \gamma_1>0$ such that
$$\leb(\Delta^n_{\omega})\leq C_1e^{-\gamma_1 n^{\upsilon}},
$$
then, for any large integer $n$, we have $\mathcal
{R}^n_\w=\{R_\w>n\}\subset\Delta^{n-N_0}_{\omega}$, and so
\begin{equation}\label{eq.tailgoal}
\leb(R_\w>n)\leq\leb(\Delta^{n-N_0}_{\omega})=C_1e^{-\gamma_1{(n-N_0)}^{\upsilon}}=C_1e^{-\gamma_1 n^{\upsilon}}.
\end{equation}
We show the set of points which are contained in finitely many satellite sets and have not been chosen yet has a measure exponentially small.
\begin{Proposition}\label{Z}
For $k,N\in\ZZ_+$,
\begin{eqnarray*}
Z_\w(k,N)&=& \bigg\{x:\exists t_1<\ldots<t_k\leq N, x\in
\bigcap_{i=1}^k{S^{t_i}_{\omega}(\Delta)}\cap\Delta^N_{\omega}\bigg\}.
\end{eqnarray*}
There are $D_3>0$ and $\lambda_3<1$, for all $N$ and $1\le
k\leq N$,
$$\leb(Z_\w(k,N)) \le D_3\lambda_3^k\leb(\Delta).$$\end{Proposition}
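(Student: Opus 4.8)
The plan is to follow the combinatorial scheme of \cite[Proposition~3.6]{AL13} in the random setting. The two analytic inputs are the measure estimate for a single satellite, Proposition~\ref{d.prop.Sn}, together with its consequence $\leb(B^k_n(x))\le(C_7+1)\leb(C(B^k_n(x)))$, and the separation properties of the boxes $B^k_n$ gathered in Lemma~\ref{l.PP}; all the constants there were fixed uniformly in $\w$, so the final $D_3,\lambda_3$ are uniform too. First I would reduce to $P$-\emph{separated} witnessing times: fix $P$ large enough both for Lemma~\ref{l.PP}(2) and for the one-step estimate below. Since the witnessing hyperbolic times $t_1<\cdots<t_k\le N$ of a point of $Z_\w(k,N)$ are distinct integers, a greedy extraction produces at least $\lceil k/P\rceil$ of them with consecutive gaps $\ge P$, so it suffices to prove, for $P$-separated times,
\[
\leb\Big(\big\{x\in\Delta^N_\omega:\exists\,s_1<\cdots<s_\ell\le N,\ s_{j+1}\ge s_j+P,\ x\in\textstyle\bigcap_{j=1}^\ell S^{s_j}_\omega(\Delta)\big\}\Big)\le\theta^\ell\,\leb(\Delta)
\]
for some $\theta<1$; the proposition then follows with $\lambda_3=\theta^{1/P}$ and a suitable $D_3$.

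For the displayed estimate I would argue by induction on $\ell$. Fix such an $x$ with $P$-separated times $s_1<\cdots<s_\ell$. Since $x\in\Delta^N_\omega\subset\Delta^{s_j}_\omega$ and, as in the discussion preceding the statement, $x$ lies far from $\partial\Delta$, for each $j$ one has $x\in S^{s_j}_\omega(U_j)$ for a partition element $U_j\in\mathcal U^{\tau_j}_\omega$ with $\tau_j\le s_j$; equivalently $x$ lies in the box $B^{\tau_j}_{s_j}(z_j)$, where $z_j$ is the core point of $U_j$ and $C(B^{\tau_j}_{s_j}(z_j))=U_j$. Lemma~\ref{l.PP}(1) provides the decisive dichotomy --- two boxes that intersect have both their cores inside a single hyperbolic pre-ball (that of the smaller core-time) --- and Lemma~\ref{l.PP}(2) together with $s_{j+1}\ge s_j+P$ makes the boxes attached to consecutive selected times essentially disjoint. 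This organises the cores appearing along the points of the set in the display into finitely many chains of strictly nested pre-balls, and the induction reduces to the one-step claim that the part of a box $B^{\tau_j}_{s_j}(z_j)$ meeting the satellites of the next selected time $s_{j+1}$ has measure at most $\theta\,\leb(B^{\tau_j}_{s_j}(z_j))$. This one-step claim follows from Proposition~\ref{d.prop.Sn} --- which gives a factor $\asymp\lambda^{(s_{j+1}-\tau_j)/2}\le\lambda^{P/2}$ because, inside $B^{\tau_j}_{s_j}(z_j)$, the cores of the relevant $s_{j+1}$-satellites were created at times at least $P$ after $\tau_j$, or else are nested inside the same pre-ball and absorbed further down the chain --- together with $\leb(B^k_n(x))\le(C_7+1)\leb(C(B^k_n(x)))$, provided $P$ is chosen so large that $\theta:=C_7(C_7+1)\lambda^{P/2}<1$. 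Iterating $\ell$ times yields the displayed bound.

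The delicate point, and where the box machinery is essential, is exactly this one-step contraction. The naive estimate $\leb(S^n_\omega(\Delta))\le C_7\sum_{t\le n}\lambda^{(n-t)/2}\leb(\mathcal U^t_\omega)$ is only $O(\leb(\Delta))$ and carries no contraction at all, because the partition elements created at times $t$ just below $n$ may already account for a definite fraction of $\leb(\Delta)$: there is no gain if one works in all of $\Delta$. The gap $P$ in Lemma~\ref{l.PP} is precisely what removes this double counting --- working inside a box $B^{\tau_j}_{s_j}(z_j)$, whose cores all predate $s_j$, one only meets $s_{j+1}$-satellites attached to much later cores, and then Proposition~\ref{d.prop.Sn} does supply a genuine factor below $1$. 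Checking that the accounting is airtight, i.e.\ that every satellite set contributing to the set in the display is charged exactly once to a core of one of the nested chains, is the real work; it parallels \cite{AL13} once the random analogues Proposition~\ref{d.prop.Sn} and Lemma~\ref{l.PP} are available, and the greedy extraction, the chain bookkeeping and the geometric summation are then routine.
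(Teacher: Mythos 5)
Your reduction to $P$-separated witnessing times $s_1<\dots<s_\ell$ and the claimed one-step contraction inside a fixed box do not reproduce the actual mechanism of the proof, and the one-step contraction as you state it is in fact false. The source of the problem is a conflation of two different gaps. Proposition~\ref{d.prop.Sn} controls $\leb(S^{s_{j+1}}_\w(U_{j+1}))$ by a factor $C_7\lambda^{(s_{j+1}-\tau_{j+1})/2}\,\leb(U_{j+1})$, where $\tau_{j+1}$ is the time at which the core $U_{j+1}$ of that satellite was created --- not $\tau_j$, the core time of the \emph{previous} box. You write this as a factor $\asymp\lambda^{(s_{j+1}-\tau_j)/2}\le\lambda^{P/2}$, but the exponent Proposition~\ref{d.prop.Sn} actually furnishes is $(s_{j+1}-\tau_{j+1})/2$, which can be as small as $0$ (a core created just before, or at, the satellite time). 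Separating the satellite times $s_j$ by $P$ gives you no lower bound on $s_{j+1}-\tau_{j+1}$, so in the ``recent core'' case there is simply no geometric gain available from Proposition~\ref{d.prop.Sn}, and ``absorbed further down the chain'' does not supply one: a satellite around a freshly created core sitting inside $B^{\tau_j}_{s_j}(z_j)$ is a chunk of that box of comparable size, and nothing you have written shrinks it.

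This is precisely why the paper's proof (and its deterministic model in \cite{AL13}) is organised around a dichotomy rather than a single contraction estimate. For each $x\in Z_\w(k,N)$ one classifies the $k$ satellite incidences by the \emph{core-to-satellite} gap $n_i$: those with $n_i\ge P'$ (where Proposition~\ref{d.prop.Sn} does yield a genuine $\lambda^{n_i/2}$ factor) are collected into $Z^2_\w(n_1,\dots,n_p,B_1)$ and handled by Lemma~\ref{Z_2}, and those with $n_i<P'$ are collected into $Z^1_\w$ and handled by Lemma~\ref{Z_1}. The contraction in Lemma~\ref{Z_1} is \emph{not} obtained from the satellite-measure estimate at all: having many small-gap satellite incidences while never actually being chosen means repeatedly lying in a bounded-distortion pre-ball from which a definite fraction has already been removed, which is a separate escape-rate type argument. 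If $\sum n_i\ge k/2$ one concludes by Lemma~\ref{Z_2}; otherwise $p<k/(2P')$ and at least $k/(2P')$ incidences have small gaps, so Lemma~\ref{Z_1} applies, and the pairwise disjointness of cores controls $\sum_{B_1}\leb(C(B_1))\le\leb(\Delta)$ in the final summation. Your proposal never invokes Lemmas~\ref{Z_1} and~\ref{Z_2}, and without the Lemma~\ref{Z_1} mechanism there is a genuine hole exactly at the small-gap case --- the case you yourself flag as ``the real work.''
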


In order to prove this result we need several pre-lemmas in the sequel.
We fix some integer $P'\ge P$ (see $P$ in
Lemma~\ref{l.PP}; see the proof of
Proposition~\ref{Z}.
In the following, for some $t_i,x$, $m_i\le P'$ we denote $B_i=B_{t_i+m_i}^{t_i}(x)$. The proof of the next two lemmas may be found in \cite[Lemma 3.14, 3.15]{AL13}.
\begin{Lemma}\label{Z_1}
Set
\begin{eqnarray*}
Z_\w^1(k,N) &=& \bigg\{ x: \exists t_1<\ldots<t_k\leq N, m_1,\ldots,m_q<P',\\
& & x\in
S^{t_1+m_1}_{\omega}(\mathcal U^{t_1}_{\omega})\cap\ldots\cap
S^{t_k+m_k}_{\omega}(\mathcal U^{t_k}_{\omega})\cap\Delta^N_{\omega} \bigg\}.
\end{eqnarray*}

 There are constants $D_1>0$ and $\lambda_2<1$
(both independent of $P'$) such that, for all $N$ and $1\leq k \leq
N$,
$$\leb(Z_\w^1(k,N))\leq D_1{\lambda_2^k}\leb(\Delta).$$
\end{Lemma}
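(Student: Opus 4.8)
The plan is to prove the estimate by induction on $k$, showing that each additional satellite membership in the chain defining $Z^1_\omega(k,N)$ costs a fixed factor $\lambda_2<1$, so that the bound $D_1\lambda_2^k\leb(\Delta)$ falls out of a geometric series. Three structural facts drive the induction. First, Proposition~\ref{d.prop.Sn} bounds each satellite by a uniform multiple of its core: $\leb(S^{n}_\omega(U_\omega))<C_7\lambda^{(n-k)/2}\leb(U_\omega)$ for $U_\omega\in\mathcal U^k_\omega$, $n\ge k$. Second, the elements of $\mathcal U^{R_0}_\omega\cup\cdots\cup\mathcal U^j_\omega$ are pairwise disjoint subsets of $\Delta$, so the cores born up to any given time have total Lebesgue measure at most $\leb(\Delta)$. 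Third, Lemma~\ref{l.PP}: for a point $x\in\bigcap_{i=1}^k S^{t_i+m_i}_\omega(\mathcal U^{t_i}_\omega)$ the associated cores $C(B_i)\in\mathcal U^{t_i}_\omega$ are forced, by item~(1), to be nested inside the hyperbolic pre-ball $V^{t_1}_\omega$ of the earliest time of the chain, while item~(2), once $P'\ge P$, supplies the disjointness that prevents over-counting along the chain.

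The inductive step is a conditional measure estimate. Fix the smallest time $t_1$, a shift $m_1<P'$, and a core $C_1\in\mathcal U^{t_1}_\omega$; on $S^{t_1+m_1}_\omega(C_1)\cap\Delta^N_\omega$ every point has all of its remaining cores $C(B_2),\dots,C(B_k)$ contained in $V^{t_1}_\omega$ by Lemma~\ref{l.PP}(1), and, since $x\in\Delta^N_\omega$ means $x$ was never selected, $x$ lies in the satellite part and not in the core at each of the times $t_2,\dots,t_k$. Pulling this configuration back through $f^{t_1}_\omega$, which maps $V^{t_1}_\omega$ onto a fixed-size ball with distortion controlled by Proposition~\ref{p.hypreball} and Lemma~\ref{l.N0q}, one recognises inside $C_1$ a renormalised copy of the problem for $k-1$ satellite memberships at times $>t_1$. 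Combining Proposition~\ref{d.prop.Sn} (so that $\leb(S^{t_1+m_1}_\omega(C_1))\le C_7\leb(C_1)$) with the induction hypothesis for the residual chain gives
$$\leb\big(S^{t_1+m_1}_\omega(C_1)\cap\Delta^N_\omega\cap Z^1_\omega(k-1;\,t_1)\big)\le \lambda_2\,\leb(C_1),$$
where $Z^1_\omega(k-1;\,t_1)$ abbreviates the residual configuration at later times. Summing over $C_1\in\mathcal U^{t_1}_\omega$ (total measure $\le\leb(\Delta)$) and over the $P'$ choices of $m_1$ controls a single value of $t_1$; summing over $t_1$ is where Lemma~\ref{l.PP}(2) is used — after possibly thinning the tuple so that consecutive times differ by at least $P'$, the sets $B^{t_1}_{t_1+P'}$ attached to distinct admissible $t_1$ are disjoint — so that this last sum does not produce a factor $N$ but is again absorbed into $\leb(\Delta)$. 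The base case $k=1$ is the trivial bound $\leb(Z^1_\omega(1,N))\le\leb(\Delta)$, after adjusting $D_1$.

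I expect the main obstacle to be the combinatorial bookkeeping rather than any single estimate: a priori there are $\binom{N}{k}$ admissible time-tuples and many cores at each time, so a naive union bound blows up, and the content of the lemma is precisely that the nesting in Lemma~\ref{l.PP}(1) together with the delayed disjointness in Lemma~\ref{l.PP}(2) collapse this into an honest geometric product. Concretely, the delicate point is to organise the renormalisation so that, after pulling back by $f^{t_1}_\omega$, the residual sets at times $>t_1$ inside $V^{t_1}_\omega$ are genuinely of the form $Z^1$ for the induced dynamics with the \emph{same} constants $C_5,C_6,C_7,\lambda$ — which is possible exactly because those constants were obtained uniformly in $\omega$ and in scale through Proposition~\ref{p.hypreball} and Lemma~\ref{l.N0q} — so that the per-step factor $\lambda_2$ does not deteriorate along the induction. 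The substantive case is that of $P'$-separated tuples, for which $\lambda_2$ is manifestly independent of the fixed integer $P'$; the general case reduces to it by the thinning argument above, as is then exploited in the proof of Proposition~\ref{Z}.
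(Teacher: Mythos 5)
The paper does not prove this lemma itself; it simply points to \cite[Lemma~3.14]{AL13} and moves on, so there is no internal proof to compare against. What I can do is test your argument against the statements of Proposition~\ref{d.prop.Sn} and Lemma~\ref{l.PP}, which are the only tools the paper provides, and there I see a genuine gap.

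Your inductive step hinges on the conditional estimate
$$\leb\bigl(S^{t_1+m_1}_\omega(C_1)\cap\Delta^N_\omega\cap Z^1_\omega(k-1;t_1)\bigr)\le \lambda_2\,\leb(C_1),$$
which is a \emph{local} bound, proportional to $\leb(C_1)$, while the inductive hypothesis you are carrying, $\leb(Z^1_\omega(k-1,N))\le D_1\lambda_2^{k-1}\leb(\Delta)$, is a \emph{global} bound proportional to $\leb(\Delta)$. To get from the global statement to the local one you invoke a renormalisation: push forward by $f^{t_1}_\omega$, and claim you ``recognise a copy of the problem with the same constants''. But the objects in $Z^1_\omega$ — the families $\mathcal U^{t_j}_\omega$ and the satellite sets $S^{t_j+m_j}_\omega(\mathcal U^{t_j}_\omega)$ — live in $\Delta$ at time zero and are produced by the \emph{entire history} of the inductive construction of $\cp_\omega$, not only by what happens after time $t_1$. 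Lemma~\ref{l.PP}(1) tells you only that the specific cores $C(B_2),\dots,C(B_k)$ attached to the one point $x$ you fixed happen to lie inside the pre-ball $V^{t_1}_\omega(x_1)$; it does not say that the restriction of the whole satellite structure to $V^{t_1}_\omega$ is, after applying $f^{t_1}_\omega$, the satellite structure for the dynamics over $\sigma^{t_1}(\omega)$ with base $\Delta$. Without that identification (which is not a formal consequence of anything stated in \S6) your inductive hypothesis simply does not apply inside $S^{t_1+m_1}_\omega(C_1)$, and the conditional estimate is unproved. This is not bookkeeping: it is the inequality the whole lemma rests on.

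Two smaller points. You write that the residual configuration lives ``inside $C_1$''; that cannot be right, since $x\in S^{t_1+m_1}_\omega(C_1)$ is by definition in $V'^{t_1}_\omega(\cdot)\cap(\Delta\setminus C_1)$, i.e.\ outside the core — the right ambient set is $V^{t_1}_\omega(x_1)$, as Lemma~\ref{l.PP}(1) says. And your use of Lemma~\ref{l.PP}(2) to control the sum over $t_1$ is misplaced: the cores in $\mathcal U^{t_1}_\omega$ for different $t_1$ are automatically pairwise disjoint because the algorithm only selects new elements from $\Delta^{n-1}_\omega$, so $\sum_{t_1}\sum_{C_1\in\mathcal U^{t_1}_\omega}\leb(C_1)\le\leb(\Delta)$ with no appeal to Lemma~\ref{l.PP}(2); the disjointness produced by Lemma~\ref{l.PP}(2) is about the overlap of distinct $B$-blocks and is what feeds into the $P'$-thinning step in the proof of Proposition~\ref{Z}, not into this lemma. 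If you want to make your strategy work, the thing to do is to upgrade the inductive statement itself so that it is formulated relative to any hyperbolic pre-ball (a ``$Z^1$ inside $V^n_\omega(x)$'' statement), making the renormalisation something you prove rather than something you appeal to.
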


\begin{Lemma}\label{Z_2}
Given $B_1=B_{t_1}^{t_1}(x_1)$, let
\begin{eqnarray*}
Z_\w^2(n_1,\ldots,n_k,B_1) &=& \bigg\{x:\exists\,\, t_2,\ldots,t_k
\,\, with \,\,t_1<\ldots<t_k ; n_1,\ldots,n_k>P; and \,\,x_2,\ldots,x_k,\\
& & \bigg. s.t. \,\,x\in
\bigcap_{i=1}^{k}B_{t_i+n_i}^{t_i}(x_i)\cap\Delta^N_{\omega}\bigg\}.
\end{eqnarray*}
Then, there is  $D_2>0$ (independent of $B_1,n_1,\ldots,n_k$)
such that for $n_1,\ldots,n_k>P$,
$$\leb(Z_\w^2(n_1,\ldots,n_k,B_1))\leq
D_2(D_2\lambda^{n_1 /2})\dots(D_2\lambda^{n_k /2})\leb (C(B_1)).
$$
\end{Lemma}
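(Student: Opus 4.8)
The plan is to adapt the argument of \cite[Lemma~3.15]{AL13} to the present random setting, proceeding by induction on the length $k$ of the chain. Throughout I would abbreviate $U_i:=C(B_{t_i+n_i}^{t_i}(x_i))=U_{t_i,m_i}^{x_i}\in\mathcal U_\w^{t_i}$ and rely on two facts: Proposition~\ref{d.prop.Sn} bounds the satellite, $\leb(S_\w^{t_i+n_i}(U_i))\le C_7\lambda^{n_i/2}\leb(U_i)$; and $\Delta_\w^N$ is disjoint from every element chosen by the algorithm, hence from every $U_i$, so any point of $Z_\w^2(n_1,\dots,n_k,B_1)$ in fact lies in the satellite part $S_\w^{t_i+n_i}(U_i)$ (never in $U_i$) for each index $i$ of its chain. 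The base case $k=1$ is then immediate: $Z_\w^2(n_1,B_1)=S_\w^{t_1+n_1}(U_1)\cap\Delta_\w^N$ has measure at most $C_7\lambda^{n_1/2}\leb(C(B_1))$, which is already of the claimed form once $D_2$ is chosen with $D_2\ge\max\{1,\sqrt{C_7}\}$.

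For $k\ge2$ I would split according to the second link of the chain,
$$Z_\w^2(n_1,\dots,n_k,B_1)\ \subseteq\ \bigcup_{(t_2,x_2)\in\mathcal J}\Bigl(B_{t_1+n_1}^{t_1}(x_1)\cap Z_\w^2(n_2,\dots,n_k,B_{t_2}^{t_2}(x_2))\Bigr),$$
where $\mathcal J$ ranges over the finitely many admissible choices (with the accompanying $0\le m_2\le N_0$): those with $t_1<t_2\le N$ and $B_{t_2+n_2}^{t_2}(x_2)\cap S_\w^{t_1+n_1}(U_1)\neq\emptyset$ — the latter being forced, because the left member of the inclusion is contained in $S_\w^{t_1+n_1}(U_1)$. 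Since then $B_{t_1+n_1}^{t_1}(x_1)\cap B_{t_2+n_2}^{t_2}(x_2)\ne\emptyset$ with $R_0\le t_1<t_2$, the first item of Lemma~\ref{l.PP} confines the cores, $U_1\cup U_2\subset V_\w^{t_1}(x_1)$ for all $(t_2,x_2)\in\mathcal J$. I would then renormalise at ``time $t_1$'' by $g:=f_\w^{t_1}$, which by Proposition~\ref{p.hypreball}(1) maps $V_\w^{t_1}(x_1)$ diffeomorphically onto a hyperbolic ball and, by Proposition~\ref{p.hypreball}(4) and Lemma~\ref{l.N0q}, with distortion bounded by an $\w$-independent constant $D$; under $g$ the tail $B_{t_2+n_2}^{t_2}(x_2),\dots,B_{t_k+n_k}^{t_k}(x_k)$ becomes a chain of the same combinatorial type for the shifted realisation $\sigma^{t_1}(\w)$, so the inductive hypothesis, transported back through $g$, bounds the $(t_2,x_2)$-piece by $D\cdot D_2(D_2\lambda^{n_2/2})\cdots(D_2\lambda^{n_k/2})\leb(U_2)$. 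The missing factor $\lambda^{n_1/2}$ is then produced by the summation over $\mathcal J$: the distinct cores $U_2$ are pairwise disjoint (being chosen at steps $>t_1$) and all contained in $V_\w^{t_1}(x_1)$, and — because $U_2$ is disjoint from $U_1$ while $B_{t_2+n_2}^{t_2}(x_2)$ reaches into $S_\w^{t_1+n_1}(U_1)$ — each $U_2$ is squeezed into the annular region $A_\w^{t_1+n_1}(U_1)$ of \eqref{eq.Annulus}, which has width $\delta_0\lambda^{n_1/2}$; running the estimate of the second item of the lemma preceding Proposition~\ref{d.prop.Sn} through $g$ gives $\sum_{(t_2,x_2)\in\mathcal J}\leb(U_2)\le C\lambda^{n_1/2}\leb(U_1)$ for a constant $C$ independent of $\w$, $k$ and $N$. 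Combining, $\leb(Z_\w^2(n_1,\dots,n_k,B_1))\le (DC)\lambda^{n_1/2}D_2(D_2\lambda^{n_2/2})\cdots(D_2\lambda^{n_k/2})\leb(C(B_1))$, and the induction closes once $D_2$ is fixed from the outset with $D_2\ge\max\{1,\sqrt{C_7},DC\}$.

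The step I expect to be the genuine obstacle is this last summation: admissible second-level elements occur at many distinct hyperbolic times $t_2$ and, as subsets of $\Delta$, can overlap even when their cores cannot, so a naive union bound would overcount badly. This is exactly what the separation statement in the second item of Lemma~\ref{l.PP} controls — it is available because the windows obey $n_i>P$ — and it has to be combined with the uniform distortion of $g$ (Proposition~\ref{p.hypreball}(4) and Lemma~\ref{l.N0q}) so that the geometry inside $g(V_\w^{t_1}(x_1))$ stays comparable to the geometry inside $\Delta$ and the annulus count applies with a constant independent of $\w$. All remaining points are the routine constant-chasing sketched above, which is why I would only fix the value of $D_2$ at the very end.
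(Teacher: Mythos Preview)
Your approach is exactly the one the paper takes: it does not give its own argument but refers to \cite[Lemma~3.15]{AL13}, and your induction on $k$, with the base case via Proposition~\ref{d.prop.Sn} and the inductive step by splitting over the second link $(t_2,x_2)$, is that argument adapted to the random setting. One small correction: the renormalisation you describe is unnecessary and the sentence ``under $g$ the tail \dots becomes a chain of the same combinatorial type for the shifted realisation $\sigma^{t_1}(\w)$'' is not literally true (the partition algorithm for $\sigma^{t_1}(\w)$ is a fresh construction, and $g(U_i)$ are not elements of any $\mathcal U_{\sigma^{t_1}(\w)}^j$); but this does no harm, since the inductive hypothesis applies \emph{directly} to $Z_\w^2(n_2,\dots,n_k,B_2)$ for the same $\w$ and the same $N$, giving the bound $D_2(D_2\lambda^{n_2/2})\cdots(D_2\lambda^{n_k/2})\leb(U_2)$ without any push-forward or extra distortion factor. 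The bounded distortion of $g=f_\w^{t_1}$ is only needed where you later use it, namely in the summation $\sum_{\mathcal J}\leb(U_2)\le C\lambda^{n_1/2}\leb(U_1)$, which --- together with Lemma~\ref{l.PP} --- is indeed the delicate point, as you correctly identify.
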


Then we complete the proof of the metric estimates.

\begin{proof}[Proof of Proposition~\ref{Z}]
Take $P'\geq P$ (recall $P$ in Lemma~\ref{l.PP}) such that
$$\lambda^{1/2}+D_1\lambda^{P'/2}<1.$$
Let $x\in Z_\w(k,N)$, we have all the instants $u_i$ for which $x\in S^{u_i+n_i}_\w(U_{u_i,m}^{y})$ with $n_i\geq P'$, ordered as $u_1<\ldots<u_p$. Then $x\in
Z_\w^2(n_1,\ldots,n_p,B_1)$ for some $B_1$. If $\sum_{i=1}^{p}{n_i}\geq
k/2$, we are done. Otherwise, we have $\sum_{i=1}^{p}{n_i}< k/2$ and $p<k/2P'$.  Then $v_1<\ldots<v_q$ be the other instants for which $x\in S_\w^{v_i+m_i}(U_{v_i,\tilde{m}}^{z})$, where
$m_1,\ldots,m_q<P'$. Obviously $p+q\geq k$, so that $q\ge\frac{(2P'-1)k}{2P'}\geq\frac{k}{2P'}$, where $P'>1$. So $P'q\geq\frac k2$.
Thus we obtain
$$Z_\w(k,N)\subset
\bigcup_{B_1}\bigcup_{\mycom{n_1,\ldots,n_p\geq
P',}{\sum{n_i}\geq\frac{k}{2}}}Z_\w^2(n_1,\ldots,n_p,B_1)\cup{Z_\w^1\left(\frac{k}{2P'},N\right)}.
$$
By Lemma~\ref{Z_1} and ~\ref{Z_2}, we obtain
$$\leb(Z_\w(k,N))\leq \sum_{B_1}\sum_{\mycom{n_1,\ldots,n_p\geq
P',}{\sum{n_i}\geq\frac{k}{2}}} D_2(D_2\lambda^{n_1
/2})\ldots(D_2\lambda^{n_p
/2})\leb(C(B_1))+D_1\lambda_2^{\frac{k}{2P'}}\leb(\Delta).
$$
We know $\sum_{B_1}\leb(C(B_1))\leq \leb(\Delta)<\infty$ as the cores $C(B_1)$ are pairwise disjoint. There are constants $D_4>0$ and $\lambda_4<1$
such that
$$\sum_{\mycom{n_1,\ldots,n_p\geq
P',}{\sum{n_i}=n}}(D_2\lambda^{n_1
/2})\ldots(D_2\lambda^{n_p /2})\leq
D_4\lambda_4^n.
$$
Sum over $n\geq k/2$ and $B_1$, we obtain constants $D_3>0$, $\lambda_3<1$ such that
$$\leb(Z_\w(k,N))\leq D_3\lambda_3^k\leb(\Delta).$$
\end{proof}

\section{Decay of correlations on random Young towers}\label{Decay of correlations for random GMY}

In this section we prove Theorem \ref{dacayrate}. We start by compiling in \S\ref{induced mix} and \S\ref{CEDC} some definitions and key results from \cite{BBM02}, which are randomised
versions of that in \cite{Y99}.  Then, in \S\ref{rtst} we transpose the hypotheses on the return times to  the estimates on the (joint) return times. Finally, in \S\ref{stdc} we give the estimates on the induced decay of correlations. We will focus on the future time results, being that the results for the past correlations are the recycling of the arguments for the future ones, as noticed in \cite[\S6]{BBM02}.

\subsection{Mixing}\label{induced mix}
Recall the abstract setting $\hat\Delta=\{\Delta_\w\}_\w$ with the dynamics of the fibered map $F=\{F_\w\}_\w$ that we call the \emph{induced skew product}. We concern now to the mixing properties of the induced skew product with respect to a measure $\nu$ whose disintegration  $d\nu(\omega,x)=d\nu_{\omega}(x)
dP(\omega)$ is given by the family $\{\nu_\w\}_\w$ of
sample measures constructed at Theorem \ref{exist.mu.induced}
(for $A\in\mathcal B$, we have $\nu(A)=\int \nu_\w (A_\w) dP$).
For $n\geq1$ we set $F_\w^n$ for the compositions $F_{\sigma^{n-1}(\w)}\circ\cdots\circ F_\w$ and
also $F^{-n}(\mathcal B)$ for the family $\{(F_\w^n)^{-1}(\mathcal B_{\sigma^n(\w)})\}_\w$.
 Let $L^2(\nu)$ denote the space of functions $\phi=\{\phi_\w\}_\w:\hat\Delta\to\mathbb R$ such that $\phi_\w\in L^2(\mathcal B_\w,\nu_\w)$ for $P$ a.e. $\w$, and $\int_\Omega\int_{\Delta_\w}|\phi_\w|^2\,d\nu_\w dP< \infty$.

\cde We say that the random skew product $(F,\nu)$ is
\begin{enumerate}
\item\emph{exact} if  each $B\in\mathcal B$ belonging to $F^{-n}(\mathcal B)$ for all $n\geq 0$ is trivial (i.e., for almost every $\w$, either $\nu_\w(B)=0$ or $\nu_\w(B)=1$);
\item \emph{mixing} if for all $\varphi, \psi\in L^2(\nu)$, $$\lim_{n\to+\infty}\left|\int_\Omega\int_{\Delta_\w}(\varphi_{\sigma^n(\w)}\circ F_\w^n)\psi_\w\,d\nu_\w d P-\int_\Omega\int_{\Delta_\w}\varphi_{\w}\,d\nu_\w d P\int_\Omega\int_{\Delta_\w}\psi_\w\,d\nu_\w d P \right|=0.$$
\end{enumerate}
\fde

\cpr
If $(F,\nu)$ is exact then it is mixing.
\fpr

However, for the exactness (and mixing) of $(F,\nu)$ we  need to assume the following:
\begin{enumerate}
\item[($\star$)] there are $L_0(\epsilon)$ and $t_i\in \ZZ_+, 1\leq i \leq L_0$, with $g.c.d.\{t_i\}=1$ such that for $P$ a.e. $\w$,
\begin{equation*}
\leb(\{R_\w=t_i\})>0.
\end{equation*}
\end{enumerate}

 \cpr
 $(F,\nu)$ is exact (and thus mixing).
  \fpr

 \subsubsection{A remark on the return times}\label{r gcd}
One knows that there exists a GMY structure for $f$. If $g.c.d.\{R_f\}=1$ then there are $L_0$ and $t_i\in \ZZ_+, 1\leq i \leq L_0$, with $g.c.d.\{t_i\}=1$ such that $\leb(\{R_f=t_i\})>0$. Hence, condition (U$_1$) ensures that ($\star$) hold,
 just considering $\hat N=t_{L_0}$ and $\xi$ sufficiently small (only depending on $\{R_f\}$). On the other hand, if $g.c.d.\{R_f\}=q>1$ then, the previous
 partition related to $f$ also provides a partition for $f^q$, with $R_{f^q}=R_f/q$ and, in this case, $g.c.d.\{R_{f^q}\}=1$. Thus, we look then for the $q$th iterate $f_\w^q$ for the random systems. We consider the partitions $\{\bar R_\w=k\}=\{R_\w=q\cdot k\}$ and the fibered dynamics $\bar F$ ("$= F^q$") in the \emph{new} towers $\bar \Delta=\{\bar\Delta_\w\}_\w=\{\cup_{k=0}^\infty \Delta_{\w,qk}\}_\w$. Similarly to Theorem \ref{exist.mu.induced} we may obtain a family of probability measures $\{\bar\nu_\w\}$ that constitutes a disintegration of an $\bar F$-invariant probability measure $\bar \nu$, satisfying $({\bar F}_\w)_*\bar\nu_\w=\bar\nu_{\sigma^q(\w)}$. Their projections $\bar\mu_\w=\bar\pi_*\bar\nu_\w$ are absolutely continuous probability measures for which $(f_\w^q)_*\bar\mu_\w=\bar\mu_{\sigma^q(\w)}$, and the measure $\bar\mu=\{\bar\mu_\w\}$ is invariant for the power $S^q$ of the skew product. Once again, by  (U$_1$) the condition ($\star$) hold provided $\epsilon$ is sufficiently small. In this case, our proofs yield the (stretched) exponential decay of correlations for the $qth$ iterate of the perturbed system.
If we are able to guarantee a GMY induced map for $f$ with $g.c.d.\{R_f\}=1$, then the main results hold with $q=1$. For simplicity in the exposition, henceforth we will always assume that ($\star$) hold.

\subsection{Converging to equilibrium}\label{CEDC}

\subsubsection{Stopping times and joint returns}
Let us define the random variable $$V^{\ell}_{\w}=\leb(\Delta_{\w,0}\cap (F_{\w}^\ell)^{-1}(\Delta_{\sigma^\ell{(\w)},0})).$$
From condition $(\star)$ we may consider $\ell_0\in\mathbb N$ such that, for $P$ a.e. $\w$ and $\ell\geq\ell_0$, we have $V^{\ell}_{\w}>0$. For $\w\in\Omega$ and $(x,x')\in \Delta_\w\times\Delta_\w$ we introduce the \emph{stopping times} $\tau_\w^i(x,x')$ as follows:
\begin{eqnarray*}
\tau_\w^1(x,x')&=&\inf\{n\geq \ell_0: F_\w^n(x)\in\Delta_{\sigma^n(\w),0}\},\\
\tau_\w^2(x,x')&=&\inf\{n\geq \ell_0+\tau_\w^1(x,x'): F_\w^n(x')\in\Delta_{\sigma^n(\w),0}\},\\
\tau_\w^3(x,x')&=&\inf\{n\geq \ell_0+\tau_\w^2(x,x'): F_\w^n(x)\in\Delta_{\sigma^n(\w),0}\},\\
&\vdots&
\end{eqnarray*}
with the action alternating between $x$ and $x'$.
We define then the \emph{joint return time} $T_\w(x,x')$ to be the smallest integer $\tau_\w^i=\tau_\w^i(x,x')\geq\ell_0$ such that $(F_\w^{\tau_\w^i}(x),F_\w^{\tau_\w^i}(x'))$ belongs to $\Delta_{\sigma^{\tau_\w^{i}}(\w),0}\times\Delta_{\sigma^{\tau_\w^{i}}(\w),0}$, with $i\geq2$.
Note that  $\tau_\w^i -\tau_\w^{i-1}\geq \ell_0$ and $T_\w(x,x')\geq 2\ell_0$. Given $\w$ and $j\geq 1$, we consider also the partition $\xi_\w^j$ of $\Delta_\w\times\Delta_\w$
 into maximal subsets on which $\tau_\w^i$ is constant for all $1\leq i \leq j$.

We should notice that even under hypothesis of uniform decay of the tail sets we are not able to guarantee an uniform control of random variables $V_\w^\ell$. Indeed, the induced sample measures $\{\nu_\w\}_\w$ have densities uniformly bounded from above by $K_1$ but not from below (see \cite{BBM02} and, in particular,  its \emph{corrigendum}). In view of this we cannot exploit the mixing properties of the induced skew product, and we are endorsed to a large deviation arguments. As we will see later, this is the principal cause for successive damages on the (stretched) exponential estimates.

For $q\in\NN$ and each fixed sequence of integers $0=\tau_0<\tau_1<\ldots<\tau_q$, with $\tau_i-\tau_{i-1}\geq \ell_0$, we set
$$Q_q^{\{\tau_i\}}(\w)=\sum_{i=1}^q V^{\tau_i-\tau_{i-1}}_{\sigma^{\tau_{i-1}}(\w)}.$$

\begin{Lemma}
 There exists $\rho>0$ and $0<\varrho<1$ such that for each $q\in\NN$ and every fixed sequence of integers $0=\tau_0<\tau_1<\ldots<\tau_q$, with $\tau_i-\tau_{i-1}\geq \ell_0$, there is a set
 $M_q^{\{\tau_i\}}\subset\Omega$, with $P(M_q^{\{\tau_i\}})\leq\varrho^q$ and such that if $\w\notin M_q^{\{\tau_i\}}$ then $Q_q^{\{\tau_i\}}(\w)\geq\rho q$.
 \end{Lemma}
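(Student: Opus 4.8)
The plan is to derive the conclusion from a large deviation estimate for the sum $Q_q^{\{\tau_i\}}(\w) = \sum_{i=1}^q V^{\tau_i-\tau_{i-1}}_{\sigma^{\tau_{i-1}}(\w)}$, exploiting the i.i.d.\ structure of the noise. First I would observe that, by condition $(\star)$ and the choice of $\ell_0$, each random variable $V^{\ell}_{\w}$ with $\ell\geq\ell_0$ is strictly positive $P$-a.e., and it is bounded above by $\leb_0(\Delta_{\w,0})\leq 1$. Hence $\int_\Omega V^\ell_\w\,dP=:v_\ell>0$ for each $\ell\geq\ell_0$; I would want a uniform lower bound $\inf_{\ell\geq\ell_0} v_\ell\geq 2\rho_0>0$. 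This uniformity should follow from the uniformity conditions (U$_1$)--(U$_2$): the density $\rho_\w$ of $\nu_\w$ is bounded above by $K_1$, and the quantity $V^\ell_\w$ measures the Lebesgue mass of points in the base that return to the base after exactly $\ell$ steps; using that $F_\w$ expands and has bounded distortion, once $\ell$ is moderately large the image $F_\w^\ell$ of the relevant piece covers $\Delta$, giving a lower bound on $V^\ell_\w$ that does not degrade as $\ell\to\infty$. (Alternatively, and more robustly, one can take $\ell_0$ large enough that $v_\ell\geq 2\rho_0$ for all $\ell\geq\ell_0$ directly from the GMY structure; this is where I would be most careful.)

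Next I would set up the large deviation argument. The key point is that $V^{\tau_i-\tau_{i-1}}_{\sigma^{\tau_{i-1}}(\w)}$ depends only on the coordinates $\w_{\tau_{i-1}},\ldots,\w_{\tau_i-1}$ of $\w$ (this is visible from the definition of $F_\w$ on the tower, which reads off finitely many noise symbols between consecutive returns). Since the blocks $[\tau_{i-1},\tau_i)$ are pairwise disjoint and $P=\theta_\epsilon^{\ZZ}$ is a product measure, the summands $Y_i:=V^{\tau_i-\tau_{i-1}}_{\sigma^{\tau_{i-1}}(\w)}$, $i=1,\ldots,q$, are \emph{independent} random variables, each taking values in $[0,1]$ with mean $v_{\tau_i-\tau_{i-1}}\geq 2\rho_0$. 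Therefore $Q_q^{\{\tau_i\}}(\w)=\sum_{i=1}^q Y_i$ is a sum of independent, uniformly bounded random variables with $\mathbb E[Q_q^{\{\tau_i\}}]\geq 2\rho_0 q$.

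Then I would apply a standard concentration inequality. By Hoeffding's inequality applied to $-Y_i$ (each $Y_i\in[0,1]$), for any $t>0$,
\[
P\left(Q_q^{\{\tau_i\}}(\w)\leq \mathbb E[Q_q^{\{\tau_i\}}]-t\right)\leq \exp\!\left(-\frac{2t^2}{q}\right).
\]
Choosing $t=\rho_0 q$ and setting $\rho:=\rho_0$, this gives
\[
P\left(Q_q^{\{\tau_i\}}(\w)< \rho q\right)\leq P\left(Q_q^{\{\tau_i\}}(\w)\leq 2\rho_0 q-\rho_0 q\right)\leq e^{-2\rho_0^2 q}=:\varrho^q,
\]
with $\varrho=e^{-2\rho_0^2}\in(0,1)$. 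Defining $M_q^{\{\tau_i\}}=\{\w:Q_q^{\{\tau_i\}}(\w)<\rho q\}$ yields exactly the asserted set, with $P(M_q^{\{\tau_i\}})\leq\varrho^q$ and $Q_q^{\{\tau_i\}}(\w)\geq\rho q$ for $\w\notin M_q^{\{\tau_i\}}$. I expect the genuinely nontrivial step to be the uniform-in-$\ell$ lower bound $\inf_{\ell\geq\ell_0}\int_\Omega V^\ell_\w\,dP>0$: the independence and Hoeffding parts are routine, but establishing that the ``returning mass'' does not decay requires combining the covering property of hyperbolic pre-balls (Lemma~\ref{l.N0q}), the uniform distortion/expansion constants $K,\kappa$ from (U$_2$), and positivity of $\leb(\{R_\w=t_i\})$ from $(\star)$, possibly after enlarging $\ell_0$.
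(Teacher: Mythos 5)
Your proof strategy (independence of the summands + Hoeffding) is the natural one and, as far as one can tell, is essentially what the paper has in mind (the lemma is stated without proof, following \cite{BBM02}). The independence claim is correct and worth emphasizing: by the adaptedness of the construction of $\cp_\w$, the event $\{R_\w=k\}$ depends only on $\w_0,\dots,\w_{k-1}$, hence $V^\ell_\w$ depends only on $\w_0,\dots,\w_{\ell-1}$, and the blocks $[\tau_{i-1},\tau_i)$ being disjoint makes the $Y_i$ independent under $P=\theta_\epsilon^{\ZZ}$.

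The genuine gap is exactly where you flag uncertainty, and I want to sharpen it. Your first heuristic --- that bounded distortion and the covering property give a lower bound on the \emph{random variable} $V^\ell_\w$ itself that ``does not degrade as $\ell\to\infty$'' --- is false, and the paper says so explicitly in the paragraph right after introducing the stopping times: ``we are not able to guarantee a uniform control of random variables $V^\ell_\w$'', precisely because the densities $\rho_\w$ have no lower bound. A covering argument gives, for a single return pattern of total length $\ell$ with $r$ returns, a cylinder of Lebesgue measure of order $\prod_{j=1}^r \leb(\{R_{\sigma^{(\cdot)}(\w)}=k_j\})/\leb(\Delta)$, which is exponentially small in $r$; since $r$ grows linearly in $\ell$, this does not stabilize. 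What you actually need --- and correctly identify as the alternative --- is a uniform lower bound on the expectation $v_\ell=\int_\Omega V^\ell_\w\,dP$. This does hold, but it is not a consequence of ``covering plus distortion'' in any direct sense; it is a renewal-theoretic fact. Writing $V^\ell_\w\asymp \leb(\Delta)\sum_{\text{patterns}}\prod_j q_j(\w)$ (with $q_j$ the distortion-adjusted relative mass of the $j$-th return cylinder, the comparability constant being uniform in $\ell$ by the standard geometric summation of distortion along orbits using $\kappa<1$), and using that distinct summands $q_j$ depend on disjoint blocks of coordinates, one gets
$v_\ell\asymp\leb(\Delta)\sum_{\text{patterns}}\prod_j \bar q_{k_j}$ where $\bar q_k=\int\leb(\{R_\w=k\})\,dP/\leb(\Delta)$; this is the renewal density of an aperiodic (by $(\star)$) i.i.d.\ renewal process, which is bounded below for all $\ell\geq\ell_0$ by the Erd\H{o}s--Feller--Pollard renewal theorem. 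That is the missing step. With that lower bound in place, your Hoeffding computation is correct, gives $\rho=\rho_0$ and $\varrho=e^{-2\rho_0^2}$ independent of $q$ and of $\{\tau_i\}$, and proves the lemma.
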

From now on, let $\lambda,\lambda'$ be absolutely continuous probability measures on $\Delta$ with densities $\varphi,\varphi' \in \mathcal F_\beta^+$, and set $\Lambda=\lambda\times\lambda'$. In particular, we could take $\lambda$ or $\lambda'$ as $\nu$. Let us state a lower bound for $\Lambda_\w(\{T_\w=\tau_\w^i\})$, by transposing the previous large deviation arguments for estimates on the sets on towers.

\begin{Corollary}\label{coroln4}
Assume additionally that $\varphi, \varphi'\in\mathcal L^\infty$. There exist  $C>0$, $0<\varrho<1$ and a random variable $n_4(\w)$ defined on a full $P$-measure subset of $\Omega$ such that
 \begin{equation*}
\left\{
\begin{array}{ll}
K_1\Lambda_\w(\{(x,x')\in\Delta_\w\times\Delta_\w:Q_n^{\{\tau_\w^i(x,x')\}}(\w)<\rho n\})\leq \varrho^{n/2}, & \forall n\geq n_4(\w)\\
P(\{n_4(\w)>n\})\leq  C \varrho^{n/2},& \forall n\geq 1.
      \end{array}\right.
\end{equation*}
\end{Corollary}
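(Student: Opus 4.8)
The plan is to reduce the statement, via Fubini and the preceding lemma, to an \emph{averaged} estimate
\[
\int_\Omega K_1\,\Lambda_\w\big(A_n(\w)\big)\,dP\;\le\;C\,\widetilde\varrho^{\,n},\qquad\widetilde\varrho<1,
\]
where $A_n(\w)=\{(x,x')\in\Delta_\w\times\Delta_\w:Q_n^{\{\tau_\w^i(x,x')\}}(\w)<\rho n\}$, and then to upgrade this to the asserted almost sure statement by Markov's inequality together with the Borel--Cantelli lemma, in exactly the same way as in the proof of Lemma~\ref{lemmatoC}.

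First I would fix $\w$ and observe that $A_n(\w)$ is a union of atoms of the partition $\xi_\w^n$, since $Q_n^{\{\tau_\w^i(x,x')\}}(\w)$ depends on $(x,x')$ only through the values $\tau_\w^1(x,x'),\dots,\tau_\w^n(x,x')$. On the atom carrying a fixed admissible sequence $0=\tau_0<\tau_1<\dots<\tau_n$ (with $\tau_i-\tau_{i-1}\ge\ell_0$) the inclusion in $A_n(\w)$ holds precisely when $Q_n^{\{\tau_i\}}(\w)<\rho n$, a condition on $\w$ alone; by the preceding lemma this forces $\w\in M_n^{\{\tau_i\}}$, a set of $P$-measure at most $\varrho^n$. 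Writing $A_\w^{\{\tau_i\}}$ for the corresponding atom (empty if there is none) and using Tonelli's theorem to exchange sum and integral, one obtains
\[
\int_\Omega\Lambda_\w\big(A_n(\w)\big)\,dP\;=\;\sum_{\{\tau_i\}}\int_{\{Q_n^{\{\tau_i\}}<\rho n\}}\Lambda_\w\big(A_\w^{\{\tau_i\}}\big)\,dP\;\le\;\sum_{\{\tau_i\}}\int_{M_n^{\{\tau_i\}}}\Lambda_\w\big(A_\w^{\{\tau_i\}}\big)\,dP .
\]

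The crux is to make this sum over admissible return patterns converge geometrically in $n$, and this is exactly where the extra hypothesis $\varphi,\varphi'\in\mathcal L^\infty$ is used. Because $\lambda_\w$ and $\lambda'_\w$ then have densities bounded above, uniformly in $\w$, with respect to the lifted Lebesgue measure, $\Lambda_\w(A_\w^{\{\tau_i\}})$ is dominated by the Lebesgue measure of the set of pairs whose alternating returns occur exactly at the prescribed times; by the return-time tail bound of Theorem~\ref{random Markov structure}, carried to the towers, each gap $g_i=\tau_i-\tau_{i-1}$ contributes a factor of order $e^{-\gamma_1 g_i^{\upsilon}}$, so that $\int_\Omega\Lambda_\w(A_\w^{\{\tau_i\}})\,dP\le C_0^{\,n}\prod_{i=1}^{n}e^{-\gamma_1 g_i^{\upsilon}}$ for a fixed constant $C_0$. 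Bounding each summand by Hölder's inequality with exponent $2$ (using also $\Lambda_\w(A_\w^{\{\tau_i\}})\le1$), together with $P(M_n^{\{\tau_i\}})\le\varrho^n$, the right-hand side above is at most
\[
\varrho^{n/2}\sum_{g_1,\dots,g_n\ge\ell_0}\prod_{i=1}^{n}\big(C_0\,e^{-\gamma_1 g_i^{\upsilon}}\big)^{1/2}\;=\;\varrho^{n/2}\,A^{\,n},\qquad A:=\sum_{g\ge\ell_0}\big(C_0\,e^{-\gamma_1 g^{\upsilon}}\big)^{1/2}.
\]
The series $A$ is finite, and since it is the tail of a convergent series it can be made as small as we wish by taking $\ell_0$ larger (which only reinforces the positivity of the $V_\w^\ell$ and leaves everything else intact); in particular we may assume $\sqrt{\varrho}\,A<1$, and then $\widetilde\varrho:=\sqrt{\varrho}\,A$ does the job, the constant $C$ absorbing $K_1$ and the density bounds of $\varphi,\varphi'$.

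Finally, Markov's inequality yields $P\big(\{K_1\Lambda_\w(A_n(\w))>\widetilde\varrho^{\,n/2}\}\big)\le C\,\widetilde\varrho^{\,n/2}$, whence $\sum_{n\ge1}P(\{K_1\Lambda_\w(A_n(\w))>\widetilde\varrho^{\,n/2}\})<\infty$; by Borel--Cantelli, for $P$-a.e.\ $\w$ there is an integer $n_4(\w)$ with $K_1\Lambda_\w(A_n(\w))\le\widetilde\varrho^{\,n/2}$ for all $n\ge n_4(\w)$, and $P(\{n_4(\w)>n\})\le\sum_{m\ge n}C\,\widetilde\varrho^{\,m/2}\le C'\widetilde\varrho^{\,n/2}$; relabelling $\widetilde\varrho$ as the $\varrho$ of the statement finishes the proof. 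I expect the main obstacle to be precisely the geometric summability in the third paragraph: it pits the combinatorial abundance of admissible return patterns against the exponential smallness of the sets $M_n^{\{\tau_i\}}$ and of the tower return-time tails, and verifying that the constants cooperate — which is exactly why the a priori boundedness of the densities $\varphi,\varphi'$ is indispensable — is the delicate point, and it is also what degrades the rate from a $\upsilon$-type exponent down to a plain geometric $\widetilde\varrho^{\,n/2}$.
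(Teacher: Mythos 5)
Your overall skeleton is the natural one and matches the paper's intent: reduce to an averaged estimate $\int_\Omega K_1\Lambda_\w(A_n(\w))\,dP\le C\tilde\varrho^n$ via Fubini and the preceding lemma, then upgrade to an almost sure statement with waiting time by Markov plus Borel--Cantelli as in Lemma~\ref{lemmatoC}; the identification of $A_n(\w)$ with the $\xi_\w^n$-atoms on which $\w\in M_n^{\{\tau_i\}}$ is also correct, since the contrapositive of the lemma gives exactly $Q_n^{\{\tau_i\}}(\w)<\rho n\Rightarrow\w\in M_n^{\{\tau_i\}}$.

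The gap is in the crucial summability step. Your Cauchy--Schwarz argument needs a pointwise-in-$\{\tau_i\}$ bound of the form $\int_\Omega\Lambda_\w(\Gamma_\w^{\{\tau_i\}})\,dP\le C_0^{\,n}\prod_i e^{-\gamma_1 g_i^{\upsilon}}$, which you deduce from the return-time tail bound lifted to the towers. This works in case (i) of Theorem~\ref{random Markov structure}, where the tail bound is uniform in $\w$. But Corollary~\ref{coroln4} is invoked in \emph{both} cases (equations \eqref{n4} and \eqref{n4 2}), and in case (ii) the per-gap estimate coming from Lemma~\ref{K2} carries the unbounded random factor $\leb(\Delta_{\sigma^{\tau_i+\ell_0}(\w)})$ (equivalently $n_3(\sigma^{\tau_i+\ell_0}(\w))$, see the proof of Lemma~\ref{R to T}); these factors at different $\tau_i$'s are not independent, so they do not disappear merely by averaging in $\w$, and your claimed bound on $\int_\Omega\Lambda_\w(\Gamma_\w^{\{\tau_i\}})\,dP$ is not justified there. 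You would need either to restrict the corollary to a place where a uniform per-gap estimate is available, or to import a good/bad-block argument of the kind used later in the proof of Lemma~\ref{R to T}. A secondary point worth flagging: you enlarge $\ell_0$ to make $A=\sum_{g\ge\ell_0}(C_0 e^{-\gamma_1 g^\upsilon})^{1/2}$ small, but the $\varrho$ you are squaring comes from the preceding lemma, which itself depends on $\ell_0$ (through which $V_\w^\ell$ are allowed to enter $Q_q^{\{\tau_i\}}$); it must be checked that increasing $\ell_0$ does not push that $\varrho$ toward $1$ faster than $A$ shrinks.
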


We give now some estimates on stopping times and joint return times.

 \begin{Lemma}\label{vis}
 If $\Gamma\in\xi_\w^i$ is such that $T_\w\vert_\Gamma>\tau_\w^{i-1}$, then letting $V^{\tau_{\w}^i-\tau_{\w}^{i-1}}_{\sigma^{\tau_{i-1}}(\w)}$ be associated to $\tau_\w^{i}(\Gamma)$,
$$\Lambda_\w(\{T_\w>\tau_\w^i\}\vert\Gamma)\leq 1- V^{\tau_{\w}^i-\tau_{\w}^{i-1}}_{\sigma^{\tau_{\w}^{i-1}}(\w)}/C$$
where $C=C(\lambda,\lambda')$ depends on the Lipschitz constants of $\varphi$ and $\varphi'$. This dependence can be removed if we consider $i\geq  i_0(\varphi,\varphi')$.
\end{Lemma}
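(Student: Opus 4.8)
\textbf{Proof plan for Lemma \ref{vis}.}
The plan is to show that, conditioned on the set $\Gamma\in\xi_\w^i$ where the first $i$ stopping times have a prescribed configuration and the joint return has not yet occurred (i.e.\ $T_\w|_\Gamma>\tau_\w^{i-1}$), there is a definite lower bound on the conditional probability of the event $\{T_\w=\tau_\w^i\}$, and this lower bound is proportional to the visiting probability $V^{\tau_\w^i-\tau_\w^{i-1}}_{\sigma^{\tau_\w^{i-1}}(\w)}$ of the base of the tower at the appropriate time. By definition of the alternating stopping times, along $\Gamma$ one of the two coordinates — say $x$ — already lies in the base $\Delta_{\sigma^{\tau_\w^{i-1}}(\w),0}$ at time $\tau_\w^{i-1}$ (this is what makes $\tau_\w^{i-1}$ a stopping time for that coordinate). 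Thus $\{T_\w=\tau_\w^i\}$ on $\Gamma$ is exactly the event that the \emph{other} coordinate $x'$ first returns to the base at time $\tau_\w^i\ge\ell_0+\tau_\w^{i-1}$, \emph{and} simultaneously the coordinate $x$ happens to be in the base at that same time $\tau_\w^i$. The complement $\{T_\w>\tau_\w^i\}$ on $\Gamma$ is then the event that $x$ misses the base at the return time $\tau_\w^i$ of $x'$.

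First I would fix the element $\Gamma$ and note that $\tau_\w^i$ is constant on $\Gamma$, so $\Delta=\tau_\w^i-\tau_\w^{i-1}$ is a fixed integer $\ge\ell_0$. I would then push the measure $\Lambda_\w|_\Gamma$ forward to time $\tau_\w^{i-1}$ by $F_\w^{\tau_\w^{i-1}}$; since $F$ preserves $\nu$ and more generally maps densities in $\mathcal F_\beta^+$ to densities in $\mathcal F_\beta^+$ with controlled constants, the conditional law of the returning coordinate on its slice of $\Delta_{\sigma^{\tau_\w^{i-1}}(\w),0}$ is absolutely continuous with density comparable (up to a constant $C$ depending on the Lipschitz/distortion constants $C_\varphi,C_{\varphi'}$) to $\leb_0$. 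The event that the returning coordinate hits the base exactly at the next joint return is governed by the measure of $\Delta_{\sigma^{\tau_\w^{i-1}}(\w),0}\cap (F^{\Delta}_{\sigma^{\tau_\w^{i-1}}(\w)})^{-1}(\Delta_{\sigma^{\tau_\w^i}(\w),0})$, whose $\leb_0$-measure is precisely $V^{\tau_\w^i-\tau_\w^{i-1}}_{\sigma^{\tau_\w^{i-1}}(\w)}$. Comparing with the bounded density gives a lower bound $V^{\tau_\w^i-\tau_\w^{i-1}}_{\sigma^{\tau_\w^{i-1}}(\w)}/C$ for $\Lambda_\w(\{T_\w=\tau_\w^i\}\mid\Gamma)$, hence the claimed upper bound $1-V^{\tau_\w^i-\tau_\w^{i-1}}_{\sigma^{\tau_\w^{i-1}}(\w)}/C$ for $\Lambda_\w(\{T_\w>\tau_\w^i\}\mid\Gamma)$.

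For the final assertion — that the dependence of $C$ on the Lipschitz constants of $\varphi,\varphi'$ can be dropped once $i\ge i_0(\varphi,\varphi')$ — I would use that after a number of returns to the base the density of the conditional (pushed-forward, renormalized) measure has been regularized: iterating the transfer operator contracts the relevant constants $C_\varphi,C_{\varphi'}$ to within a factor controlled only by the uniform structural constants $K,\kappa$ (condition (U$_2$)) and by $\beta$. Concretely, after $i_0$ alternations each coordinate has returned to the base at least $\lfloor i_0/2\rfloor$ times, and each such return multiplies the log-Hölder constant by $\beta^{(\cdot)}$ plus a fixed additive term, so it stabilizes below a universal bound; from that point on $C$ can be taken independent of the initial observables. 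The main obstacle I anticipate is the bookkeeping in this regularization step: one must track carefully how the separation-time Hölder constants of the conditional densities evolve under the non-autonomous iteration $F_\w,F_{\sigma(\w)},\dots$ (the fibers change at each step), making sure the bound is genuinely uniform in $\w$ and in the configuration of stopping times, and that the comparison constant extracted really is the same $C$ appearing in Corollary \ref{coroln4} so the later large-deviation estimate goes through.
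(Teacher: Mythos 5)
Your argument is correct and follows the same route as the paper's cited source (BBM02, itself a randomised version of Young '99): you correctly reduce the claim to bounding below, uniformly over the base $\Delta_{\sigma^{\tau_\w^{i-1}}(\w),0}$, the conditional density of the coordinate already in the base at time $\tau_\w^{i-1}$, using the $\mathcal F_\beta^+$ log-Hölder control (the oscillation bound on $\log\hat\varphi$ across the full base forces $\hat\varphi\geq e^{-C_0}$ once normalised), and then comparing the hit event to $V^{\tau_\w^i-\tau_\w^{i-1}}_{\sigma^{\tau_\w^{i-1}}(\w)}$, with the regularisation of the Hölder constant under repeated full returns giving the $i\geq i_0(\varphi,\varphi')$ clause. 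One small point: the invariance of $\nu$ under $F$ is not used here — the density control comes purely from the distortion and contraction constants $K,\kappa$ of the GMY structure (condition (U$_2$)), so that aside can be dropped.
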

We relate now the stopping times with the return times.
\begin{Lemma}\label{K2} For all $i,n\geq 0$ and $\Gamma\in \xi_\w^i$  we have
$$\Lambda_\w(\{\tau_\w^{i+1}-\tau_\w^i>\ell_0+n\}\vert \Gamma)\leq K_2\leb(\{R_{\sigma^{\tau_i+\ell_0}(\w)}>n\})\cdot\leb(\Delta_{\sigma^{\tau_i+\ell_0}(\w)}),$$
where $K_2=K_2(\varphi,\varphi')$ depends on the Lipschitz constants of $\varphi$ and $\varphi'$. This dependence  can be removed if we consider $i\geq  i_0(\varphi,\varphi')$.
\end{Lemma}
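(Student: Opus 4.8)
The plan is to compare the event $\{\tau_\w^{i+1}-\tau_\w^i>\ell_0+n\}$ on a fixed $\Gamma\in\xi_\w^i$ with a single-return event in the tower, and then push the estimate down to the base partition $\cp$ where the return time $R$ lives. First I would fix $\Gamma\in\xi_\w^i$ and recall that, by definition of the stopping times, the value of $\tau_\w^i$ is constant on $\Gamma$, say equal to $\tau_i$, and the action at step $i+1$ concerns one of the two coordinates, say $x$ (the argument for $x'$ is identical). After time $\tau_i$ the relevant orbit sits somewhere in the tower $\Delta_{\sigma^{\tau_i}(\w)}$; after waiting the mandatory $\ell_0$ further steps it is in $\Delta_{\sigma^{\tau_i+\ell_0}(\w)}$, and the event $\{\tau_\w^{i+1}>\tau_i+\ell_0+n\}$ says precisely that from that moment on it does not hit the $0$th level for $n$ more iterates of $F$. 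By the structure of the tower map $F_\w$ — which only moves a point up one level at a time until it reaches the top of its column and falls to the base — not returning to the base for $n$ steps starting from level $\ell$ over a base element with return time $R$ forces $R-\ell>n$, so the orbit lies in a column whose return time exceeds $n$. Hence the event is contained (modulo the measurable identification of the column with a subset of $\Delta$) in the lift of $\{R_{\sigma^{\tau_i+\ell_0}(\w)}>n\}$.

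Next I would control the conditional measure. On $\Gamma$ the measure $\Lambda_\w(\cdot\mid\Gamma)$ is, after transporting forward by $F_\w^{\tau_i+\ell_0}$, absolutely continuous with respect to the reference Lebesgue measure $\leb$ on the tower $\Delta_{\sigma^{\tau_i+\ell_0}(\w)}$, and the Radon–Nikodym density is bounded by a constant depending only on the $\mathcal F_\beta^+$-constants and $\mathcal L^\infty$-bounds of $\varphi,\varphi'$ — this is exactly the kind of bounded-distortion/bounded-density statement already built into the construction of $\nu$ and used in Lemma \ref{vis}; it gives the factor $K_2=K_2(\varphi,\varphi')$. The total $\leb$-mass of the slice of the tower at the relevant fiber over which we integrate is $\leb(\Delta_{\sigma^{\tau_i+\ell_0}(\w)})$, which accounts for that factor in the statement. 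Putting the containment from the previous paragraph together with this density bound and with $F$-invariance of $\leb$ along columns yields
$$\Lambda_\w(\{\tau_\w^{i+1}-\tau_\w^i>\ell_0+n\}\mid\Gamma)\le K_2\,\leb(\{R_{\sigma^{\tau_i+\ell_0}(\w)}>n\})\cdot\leb(\Delta_{\sigma^{\tau_i+\ell_0}(\w)}).$$

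Finally, for the last sentence I would note that the dependence of $K_2$ on the Lipschitz constants enters only through the distortion of the density of $\Lambda_\w$ relative to $\leb$ after $i$ alternating returns; since each return to the base contracts separation times and hence improves the density regularity by a definite geometric factor (the $\beta^{s(x,y)}$ control in $\mathcal F_\beta^+$, together with the uniform bound $K_1$ on $\rho_\w$ from Theorem \ref{exist.mu.induced}), once $i\ge i_0(\varphi,\varphi')$ the density is comparable to the invariant density with universal constants, so $K_2$ may be taken independent of $\varphi,\varphi'$ — this parallels the corresponding remark in Lemma \ref{vis}. The main obstacle I expect is the bookkeeping in the first paragraph: one must be careful that the stopping-time construction only tests one coordinate at step $i+1$, that the mandatory $\ell_0$-gap is correctly absorbed, and that the forward image of $\Gamma$ genuinely spreads out over (a measurable piece of) the base so that the column-return-time event is the right comparison set; the density estimate itself is then routine from the already-established bounded distortion.
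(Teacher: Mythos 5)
Your high-level plan (push the conditional measure forward by $F_\w^{\tau_i+\ell_0}$, bound its density by distortion, reduce the stopping-time event to a return-time tail set in the tower) is the right kind of argument, and the observation that the density bound is where the $(\varphi,\varphi')$-dependence enters is correct. But the crucial middle step contains a genuine gap that is specific to the \emph{random} tower.

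The gap is in the claim that the event is ``contained in the lift of $\{R_{\sigma^{\tau_i+\ell_0}(\w)}>n\}$.'' Write $\w'=\sigma^{\tau_i+\ell_0}(\w)$. A point $(a,\ell)$ at level $\ell$ of $\Delta_{\w'}$ lies over a base cell of $\Delta_{\sigma^{-\ell}(\w'),0}$, and ``not returning to level $0$ in $n$ more iterates'' is the condition $R_{\sigma^{-\ell}(\w')}(a)>\ell+n$, \emph{not} a condition on $R_{\w'}$. In the deterministic case all these return-time functions coincide, so identifying the bad set with a lift of a single tail $\{R>n\}$ is harmless; in the random case the bad set is $\bigcup_{\ell\geq 0}\{(a,\ell):R_{\sigma^{-\ell}(\w')}(a)>\ell+n\}$, whose $\leb$-measure is $\sum_{\ell\geq 0}\leb_0(\{R_{\sigma^{-\ell}(\w')}>\ell+n\})$ and involves infinitely many different realizations $\sigma^{-\ell}(\w')$. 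Passing from this sum to the product $\leb(\{R_{\w'}>n\})\cdot\leb(\Delta_{\w'})$ is exactly the content of the lemma and cannot be waved away by a ``measurable identification''; as written, your containment statement is simply false for the random tower.

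Relatedly, your explanation of the factor $\leb(\Delta_{\w'})$ does not hold up. If the pushed-forward conditional density is bounded by some $K_2'$ and one integrates it over a bad set $B\subset\Delta_{\w'}$, the resulting bound is $K_2'\leb(B)$, full stop; you cannot additionally pick up $\leb(\Delta_{\w'})$ from ``the total mass of the slice over which we integrate.'' In a correct argument the factor $\leb(\Delta_{\w'})=\sum_{\ell\geq0}\leb_0(\{R_{\sigma^{-\ell}(\w')}>\ell\})$ has to be produced by controlling the sum over levels mentioned above (this is precisely the role it plays later, via \eqref{n3}, where $\leb(\Delta_\w)$ is tied to the random variable $n_3(\w)$). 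So the step from the tower picture to the product form on the right-hand side of the lemma is missing, and the part of your sketch that is supposed to supply it is circular. To repair the argument you would need to condition on the level $\ell$ of the tracked coordinate at time $\tau_i+\ell_0$ and carry the sum over $\ell$ explicitly, being careful with the shifted realizations; the bounded-distortion and alternation bookkeeping you describe is fine, but it does not by itself bridge this gap.
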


From now on we will assume that $\varphi,\varphi' \in \mathcal F_\beta^+\cap\mathcal L^\infty$.

\subsection{From return times to joint stopping times}\label{rtst}
In this section we relate the return times to the joint stoping times. We deal separately with the uniform and non-uniform decay of the return times, as in the hypotheses of Theorem \ref{dacayrate}. We adapt the strategy in \cite{G06} to optimise the stretched estimates when comparing to the random version of Young's work \cite{Y99} used in \cite{BBM02}. However, as we will see in \S\ref{nudrt}, in the non-uniform case there are some damages on the estimates during the transposing from return times to joint stopping times, mainly due to Lemma \ref{vis} and Corollary \ref{coroln4}.

\subsubsection{Uniform decay of return times} We consider the uniform decay of return times for the induced structure.

\begin{Lemma}\label{R to T unif}
If there exist $C_1, \gamma_1>0$ and $0<\upsilon\leq1$ such that for $P$ a.e. $\w$ we have
\begin{equation*}
\begin{array}{ll}
        \leb(\{R_\w>n\})\leq C_1e^{-\gamma_1 n^\upsilon}, & \forall\,n\geq 1,
      \end{array}
\end{equation*}
then there exist $C_i, \gamma_i$, $i=1,2$, and for $P$ a.e. $\w$ a positive integer $n_2(\w)$, such that
\begin{equation*}
\left\{
\begin{array}{ll}
       \Lambda_\w(\{T_\w>n\})\leq C_1e^{-\gamma_1 n^{\upsilon}}, & \forall\,n\geq n_2(\w) \\
         P(\{ n_2(\w)>n\})\leq C_2e^{\gamma_2 n^{\upsilon}},& \forall\,n\geq 1,
      \end{array}\right.
\end{equation*}
\end{Lemma}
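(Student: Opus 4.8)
The goal is to bound the joint return time $T_\w$ from the bounds on the individual return times $R_\w$, passing through the stopping times $\tau_\w^i$. I would proceed in three stages: first control the individual waiting times $\tau_\w^{i+1}-\tau_\w^i$ using Lemma \ref{K2}; second, use the large deviation input of Corollary \ref{coroln4} together with Lemma \ref{vis} to see that, off a small-probability set, $T_\w$ coincides with some $\tau_\w^i$ for a \emph{bounded} number of steps $i$ (i.e. $i = O(n)$ when $T_\w \approx n$); third, combine the two to get the stretched exponential tail of $T_\w$ with the stated control on the waiting time $n_2(\w)$.

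More precisely, recall that $T_\w(x,x')$ is one of the $\tau_\w^i$, and that $\tau_\w^i-\tau_\w^{i-1}\ge\ell_0$, so that $\tau_\w^i\ge i\ell_0$ and $\{T_\w>n\}$ forces either (a) $\tau_\w^i>n$ for the relevant index $i$, or (b) the joint return did not happen at any of $\tau_\w^1,\dots,\tau_\w^i$. For (b): Lemma \ref{vis} says that, conditioned on not having jointly returned by step $i-1$, the probability of not returning at step $i$ is at most $1-V^{\tau_\w^i-\tau_\w^{i-1}}_{\sigma^{\tau_\w^{i-1}}(\w)}/C$. Multiplying over $i$ and using $1-t\le e^{-t}$, the $\Lambda_\w$-probability of $\{T_\w>\tau_\w^k\}$ is bounded by $\exp\bigl(-\tfrac1C Q_k^{\{\tau_\w^i\}}(\w)\bigr)$; here one must first integrate out the dependence of the $\tau_\w^i$ on $(x,x')$, or equivalently sum over the partition $\xi_\w^k$, and then Corollary \ref{coroln4} (applied with $\lambda=\lambda$, $\lambda'=\lambda'$, $\varrho$ and $n_4(\w)$ as there) gives that on $\{Q_k\ge\rho k\}$, whose complement has $\Lambda_\w$-measure $\le K_1^{-1}\varrho^{k/2}$ once $k\ge n_4(\w)$, this is at most $e^{-\rho k/C}$. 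Thus $\Lambda_\w(\{T_\w>\tau_\w^k\})\lesssim \varrho^{k/2}+e^{-\rho k/C}$ for $k\ge n_4(\w)$: after $k$ alternating attempts the joint return has occurred with overwhelming probability.

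For (a), controlling the \emph{size} of $\tau_\w^k$: by Lemma \ref{K2}, conditionally on $\xi_\w^i$, the increment $\tau_\w^{i+1}-\tau_\w^i-\ell_0$ has tail $\le K_2\,\leb(\{R_{\sigma^{\tau_i+\ell_0}(\w)}>n\})\cdot\leb(\Delta_{\cdot})\le K_2' C_1 e^{-\gamma_1 n^\upsilon}$, uniformly in the shift (using that $\leb(\Delta_\w)$ is bounded and the hypothesis $\leb(\{R_\w>n\})\le C_1e^{-\gamma_1n^\upsilon}$ holds for $P$-a.e.\ $\w$, hence for a.e.\ shift of it). A standard sum-of-stretched-exponential-tails estimate (the same convexity/large-deviation lemma used for $\sum n_i$ in the proof of Proposition \ref{Z}, i.e. that $\upsilon$-stretched-exponential tails are stable under summing a number of terms proportional to the total) shows that $\Lambda_\w(\{\tau_\w^k > n\}) \le C e^{-\gamma' n^\upsilon}$ as long as $k\le \theta n$ for a small fixed $\theta>0$ — the point being that with $k\asymp n$ i.i.d.-like increments each of stretched-exponential tail, their sum exceeding $n$ is itself stretched-exponentially unlikely in $n^\upsilon$. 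Combining: take $k=\lfloor\theta n\rfloor$; then $\{T_\w>n\}\subset\{T_\w>\tau_\w^k\}\cup\{\tau_\w^k>n\}$, and both pieces are bounded by $Ce^{-\gamma n^\upsilon}$ once $n\ge n_2(\w)$, where $n_2(\w)$ is essentially $\max\{n_4(\w)/\theta,\,\text{const}\}$; the tail bound $P(\{n_2(\w)>n\})\le C_2e^{-\gamma_2n^\upsilon}$ then follows from the corresponding bound $P(\{n_4(\w)>n\})\le C\varrho^{n/2}$ in Corollary \ref{coroln4}, absorbing constants (and noting $\varrho^{n/2}\le e^{-\gamma_2 n^\upsilon}$ for $\upsilon\le 1$).

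\textbf{Main obstacle.} The delicate point is the interplay between the two mechanisms: the large-deviation estimate for $Q_k$ forces us to take $k$ \emph{large} (of order $n$) so that $\varrho^{k/2}$ is $\le e^{-\gamma n^\upsilon}$, while the estimate on $\tau_\w^k$ requires $k$ to be \emph{not too large compared to $n$} (at most $\theta n$) so that the sum of $k$ increments staying below $n$ remains stretched-exponentially likely. Choosing $k=\lfloor\theta n\rfloor$ threads this needle, but one must check carefully that the constants $\rho,\varrho,C$ from Lemmas \ref{vis}, \ref{K2} and Corollary \ref{coroln4} do not depend on $n$ or $k$ (they depend only on $\lambda,\lambda'$ and, after $i\ge i_0(\varphi,\varphi')$, not even on those), and that the uniform-in-shift nature of the hypothesis on $R_\w$ is genuinely available — this is where the $P$-a.e.\ quantifier and the $\sigma$-invariance of $P$ are used. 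A secondary technical nuisance is that $T_\w$ is defined as a $\tau_\w^i$ with $i\ge 2$ and the index $i$ realizing $T_\w$ is itself random, so the reduction $\{T_\w>n\}\subset\{T_\w>\tau_\w^k\}\cup\{\tau_\w^k>n\}$ must be justified by partitioning on the value of that index, exactly as in the handling of $\xi_\w^j$.
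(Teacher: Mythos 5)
Your overall three‐stage architecture (control the increments $\tau_\w^{i+1}-\tau_\w^i$ via Lemma~\ref{K2}, control the random index $k_\w$ realising $T_\w$ via Lemma~\ref{vis} and Corollary~\ref{coroln4}, then combine) is exactly the paper's. But the crucial choice of the cutoff $k$ is wrong, and this leaves a genuine gap in the half of the argument dealing with $\{\tau_\w^k>n\}$.

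You take $k=\lfloor\theta n\rfloor$ and claim that a ``standard sum-of-stretched-exponential-tails estimate'' (pointing at the lemma used in Proposition~\ref{Z}) yields $\Lambda_\w(\{\tau_\w^k>n\})\leq Ce^{-\gamma' n^{\upsilon}}$ when $k\leq\theta n$. That cannot be right for $\upsilon<1$ and is not what the cited lemma says. The lemma in Proposition~\ref{Z} handles \emph{pure} exponentials, where the combinatorial factor from choosing which increments are large can be absorbed because $\lambda^{\sum n_j}$ is multiplicative. For stretched exponentials the available tool is the convolution inequality $(b\star b)_p\leq b_p$ (with $b_n=1_{n\geq K}\,a_n$), which produces a factor $2^{k}$ when one sums over subsets $A\subset\{1,\dots,k\}$. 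With $k\asymp n$, the bound becomes $2^{\theta n}\cdot O\bigl(e^{-\gamma(n/2)^{\upsilon}}\bigr)$, which for $\upsilon<1$ diverges; no choice of small $\theta$ saves it, since $\theta n$ eventually outgrows $n^{\upsilon}$. A sharper ``one big jump'' / Nagaev-type estimate for sums of variables with conditionally stretched-exponential tails might rescue $k\propto n$, but that is a nontrivial result that you neither prove nor correctly attribute, and it is not what the paper does.

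The correct way to thread the needle — and what the paper does — is to take $q(n)=\lfloor\alpha n^{\upsilon}\rfloor$. Then in \eqref{stexp} the factor $2^{q(n)}$ is only $e^{O(n^{\upsilon})}$ and is dominated by $\sum_{p\geq n/2}b_p=O\bigl(n^{1-\upsilon}e^{-(\tilde\gamma/2)(n/2)^{\upsilon}}\bigr)$ once $\alpha$ is small; meanwhile $\Lambda_\w(\{k_\w>q(n)\})\leq\hat Ce^{-\hat\gamma q(n)}=\hat Ce^{-\hat\gamma\alpha n^{\upsilon}}$ (for $q(n)\geq n_4(\w)$), which is still stretched exponential in $n^{\upsilon}$ — one does not need, and should not ask for, a truly exponential bound here. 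Your choice $k\propto n$ gives a needlessly strong bound on $\{k_\w>k\}$ at the cost of destroying the bound on $\{\tau_\w^k>n\}$. Once you replace $k=\lfloor\theta n\rfloor$ by $q(n)=\lfloor\alpha n^{\upsilon}\rfloor$ and invoke the Gou\"ezel-style convolution estimate (setting $n_2=n_4$), the rest of your argument — including the waiting-time tail $P(\{n_2(\w)>n\})$ inherited from Corollary~\ref{coroln4} — does go through and matches the paper.
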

\begin{proof}
Part of the proof is a randomised version of \cite[lemma 4.2]{G06}, where we must take (carefully) $L=1$, $t=\tau$, $\tau=T$ and $\mu=\Lambda$.
From Lemma \ref{K2} we may consider  $a_n=\tilde C_1 e^{-\tilde\gamma_1 n^\upsilon}$ to be so that $$\Lambda_\w(\{\tau_\w^j-\tau_\w^{j-1}=n\vert \tau_\w^1,\ldots,\tau_\w^{j-1}\})\leq a_n.$$
 We recall that the convolution $b^1\star
b^2$  of two real sequences $b^1$ and $b^2$ is given by $$(b^1 \star b^2)_n =\sum_{i+j=n}
b^1_i b^2_j.$$ When $b$ is a sequence, we also write $b^{\star \ell}$
for the sequence obtained by convolving $\ell$ times the sequence $b$ with itself. As shown in \cite{G06}, for large enough $K$ the sequence $b_n=1_{n\geq K} a_n$ satisfies
  \begin{equation*}
   (b\star b)_p \leq b_p,\,\,\,\forall p\in \NN.
  \end{equation*}
We define the measurable function $k_\w:\Delta\to\mathbb N$ as follows: if $(x,x')\in\Delta_\w\times\Delta_\w$ is such that $T_\w(x,x')=\tau_\w^i(x,x')$, then we set $k_\w(x,x')=i$.
Let $k\geq 0$ and $A \subset \{1,\ldots,k\}$. For $j\in A$, take
$n_j \geq 1$. Set
\begin{equation*}Y_\w(A,n_j)=\{(x,x')\in\Delta_\w\times\Delta_\w : k_\w(x,x') \geq \sup(A) \text{ and }
 \tau_\w^j(x,x')-\tau_\w^{j-1}(x,x')=n_j, \forall j \in A\}.
 \end{equation*}
  Conditioning
successively with respect to the different times, we get
  \begin{equation*}
  \Lambda_\w \bigl( Y_\w(A,n_j)\bigr) \leq \prod_{j\in A} \Lambda_\w(\{ \tau_\w^j-\tau_\w^{j-1}=n_j \vert
  \tau_\w^{1},\ldots,\tau_\w^{j-1}\})
  \leq \prod_{j\in A} a_{n_j}.
  \end{equation*}

For each $n\in\NN$ we  define $q(n)=\lfloor \alpha n^{\upsilon}\rfloor$, where $\alpha$ is to be determined later. Let $(x,x')$ be such that $T_\w(x,x')>n$. If $k_\w(x,x')=\ell\leq q(n)$,  let $n_j=\tau_\w^j(x,x')-\tau_\w^{j-1}(x,x')$ for $j \leq \ell$, and
$A=\{ j : n_j \geq K\}$. Thus, $(x,x') \in Y_\w(A,n_j)$ and $\sum_{j\in A} n_j \geq n/2$ if $n$ is large enough. Consequently,
  \begin{equation}
  \label{cdcd0}
  \{ (x,x') : T_\w(x,x') > n\}
  \subset \{ k_\w(x,x')> q(n)\} \cup
  \bigcup_{A \subset \{1,\ldots,q(n)\}}
  \bigcup_{\substack{n_j \geq K \\ \sum_A n_j \geq n/2}} Y_\w(A, n_j).
  \end{equation}

Following the estimates for part (II) in the proof of \cite[Proposition 5.6]{BBM02}, there are $\hat C, \tilde C$ (depending on the Lipschitz constants of $\varphi$ and $\varphi'$), $\hat\gamma,\tilde \gamma>0$
and a random variable $n_4(\w)$ on a full measure subset of $\Omega$ (the same as in Corollary \ref{coroln4}) such that
 \begin{equation}\label{n4}
\left\{
\begin{array}{ll}
\Lambda_\w(\{k_\w>q(n)\})\leq \hat C e^{ -\hat \gamma q(n)}, & \forall n \textrm{ such that } q(n)\geq n_4(\w)\\
P(\{n_4(\w)>n\})\leq \tilde C e^{ -\tilde \gamma n},& \forall n\geq 1.
      \end{array}\right.
\end{equation}
 For the measure of the second part in \eqref{cdcd0} we have
  \begin{align}
  \Lambda_\w \left(
  \bigcup_{A \subset \{1,\ldots,q(n)\}}
  \bigcup_{\substack{n_j \geq K \\ \sum_A n_j \geq n/2}} Y_\w(A, n_j) \right)
  &
  \leq \sum_{A \subset \{1,\ldots,q(n)\}} \sum_{\substack{n_j \geq K\nonumber
  \\ \sum_A n_j \geq n/2}}  \prod_{j\in A} a_{n_j}
  \\&
  \leq  \sum_{0 \leq \ell \leq q(n)} \binom{q(n)}{\ell}
  \sum_{p=n/2}^\infty \left(b^{\star \ell} \right)_p\nonumber
  \\&
 \leq  2^{q(n)} \sum_{p=n/2}^\infty b_p.\label{stexp}
  \end{align}
Since $b_n=\mathcal O(e^{-\tilde\gamma_1 n^{\upsilon}})$, we have
 \begin{equation}\label{O}\sum_{p=n/2}^\infty b_p =\mathcal O(n^{1-\upsilon} e^{-\frac{\tilde\gamma}2 n^{\upsilon}}).\end{equation}
 We notice that the previous estimates are uniform over $\w$. The proof follows then by \eqref{cdcd0}, \eqref{n4} and \eqref{stexp} just by taking $n_2=n_4$ and considering $\alpha$ small enough.
\end{proof}

\subsubsection{Non-uniform decay of return times}\label
{nudrt}
We treat now the non-uniform decay of return times.

\begin{Lemma}\label{R to T}
 If there exist $C_i, \gamma_i$, $i=1,2$, and $0<\upsilon\leq1$, and for $P$ a.e. $\w$ a positive integer $g_0(\w)$, such that
\begin{equation}\label{return stret RtTnu}
\left\{
\begin{array}{ll}
        \leb(\{R_\w>n\})\leq C_1e^{-\gamma_1 n^\upsilon}, & \forall\,n\geq g_0(\w) \\
         P(\{g_0(\w)>n\})\leq C_2e^{-\gamma_3 n^\upsilon},& \forall\,n\geq 1,
      \end{array}\right.
\end{equation}
then there exist $C_j, \gamma_j$, $j=3,4$, and for $P$ a.e. $\w$ a positive integer $n_2(\w)$, such that
\begin{equation*}
\left\{
\begin{array}{ll}
       \Lambda_\w(\{T_\w>n\})\leq C_3e^{-\gamma_3 n^{\upsilon/2}}, & \forall\,n\geq n_2(\w) \\
         P(\{n_2(\w)>n\})\leq C_4e^{\gamma_4 n^{\upsilon/2}},& \forall\,n\geq 1,
      \end{array}\right.
\end{equation*}
\end{Lemma}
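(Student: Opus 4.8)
The plan is to imitate, essentially step by step, the proof of Lemma~\ref{R to T unif} (the randomised form of \cite{G06}), and to isolate and absorb the effect of the random waiting time $g_0$. The only genuinely new point is this: Lemma~\ref{K2} bounds $\Lambda_\w(\{\tau_\w^{j}-\tau_\w^{j-1}=\ell_0+n\mid\tau_\w^{1},\dots,\tau_\w^{j-1}\})$ by $K_2\,\leb(\{R_{\sigma^{\tau_{j-1}+\ell_0}(\w)}>n-1\})\,\leb(\Delta_{\sigma^{\tau_{j-1}+\ell_0}(\w)})$, which is of the clean stretched-exponential shape $\mathcal{O}(e^{-\tilde\gamma_1 n^{\upsilon}})$ only once $n$ passes the waiting time $g_0(\sigma^{\tau_{j-1}+\ell_0}(\w))$ of the shifted realization; and the shifts $\tau_{j-1}+\ell_0$ sweep the a priori unbounded segment $[0,T_\w]$ of the base orbit. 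So the first task is to control $g_0$, and also $\leb(\Delta_{\cdot})$, along orbits.

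For this I would invoke Borel--Cantelli. By shift-invariance of $P$ and \eqref{return stret RtTnu}, $\sum_{k\ge1}P(\{g_0(\sigma^{k}(\w))>\lfloor\sqrt k\rfloor\})<\infty$, so for $P$-a.e.\ $\w$ the variable $H(\w):=\sup_{k\ge0}\big(g_0(\sigma^{k}(\w))-\lfloor\sqrt k\rfloor\big)^{+}$ is finite, and splitting the union bound at $k\simeq m^{2}$ gives $P(\{H(\w)>m\})\le C'e^{-\gamma' m^{\upsilon}}$. Thus $g_0(\sigma^{k}(\w))\le H(\w)+\sqrt k$ for every $k$; in particular $g_0(\sigma^{\tau_{j-1}+\ell_0}(\w))\le H(\w)+\sqrt{T_\w}$, and by the same envelope applied to the tower decomposition $\leb(\Delta_{\sigma^{k}(\w)})$ grows at most like a constant multiple of $H(\w)+\sqrt k$. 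It is precisely this $\sqrt k$ envelope, forced by the summability required in Borel--Cantelli, that degrades the exponent from $\upsilon$ to $\upsilon/2$.

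Now I would run the estimate of $\Lambda_\w(\{T_\w>n\})$ as in Lemma~\ref{R to T unif} with two changes. First, take $q(n)=\lfloor\alpha n^{\upsilon/2}\rfloor$: by Corollary~\ref{coroln4} and the argument behind \eqref{n4}, $\Lambda_\w(\{k_\w>q(n)\})\le\hat Ce^{-\hat\gamma q(n)}$ for $n$ past a random threshold of size $(n_4(\w)/\alpha)^{2/\upsilon}$, which has an $n^{\upsilon/2}$-tail since $n_4(\w)$ is geometric. Second, decompose $\{T_\w>n,\,k_\w\le q(n)\}$ dyadically over $T_\w\in[2^{\ell}n,2^{\ell+1}n)$, $\ell\ge0$ --- needed because $T_\w$ is unbounded. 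On the $\ell$-th piece set $\phi_\ell:=H(\w)+\lceil\sqrt{2^{\ell+1}n}\,\rceil$ and call $n_j=\tau_\w^{j}-\tau_\w^{j-1}$ \emph{large} if $n_j\ge\phi_\ell$; for $\alpha$ small and $n$ past a further random threshold of size $(c\alpha H(\w))^{2/(2-\upsilon)}$ the large indices $A\subset\{1,\dots,k_\w\}$ carry $\sum_{j\in A}n_j\ge T_\w/2\ge2^{\ell-1}n$, while each large $n_j$ satisfies $n_j\ge\phi_\ell\ge H(\w)+\sqrt{T_\w}\ge g_0(\sigma^{\tau_{j-1}+\ell_0}(\w))$, so Lemma~\ref{K2} yields $a_{n_j}\le P_\ell\,e^{-\tilde\gamma_1 n_j^{\upsilon}}$ with a polynomial-size prefactor $P_\ell=\mathcal O(\sqrt{2^{\ell}n}+H(\w))$ coming from $\leb(\Delta_{\cdot})$. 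Conditioning successively on the $\tau_\w^i$ exactly as for the sets $Y_\w(A,n_j)$ and applying the convolution lemma of \cite{G06} to the pure cut-off stretched exponential $\mathbf{1}_{p\ge\phi_\ell}e^{-\tilde\gamma_1 p^{\upsilon}}$ (valid since $\phi_\ell\ge\sqrt{2n}$ exceeds its absolute constant), the $\ell$-th piece is $\le(1+P_\ell)^{q(n)}\sum_{p\ge2^{\ell-1}n}e^{-\tilde\gamma_1 p^{\upsilon}}$; summing over $\ell$ (the exponentials $e^{-\tilde\gamma_1(2^{\ell-1}n)^{\upsilon}}$ decay doubly-exponentially in $\ell$, beating the exponential-in-$\ell$ growth of $(1+P_\ell)^{q(n)}$) gives $\mathcal O(e^{-\gamma_3 n^{\upsilon}})$, since $(1+P_\ell)^{q(n)}=e^{\mathcal O(n^{\upsilon/2}\log n)}$ is crushed by $e^{-\tilde\gamma_1(n/2)^{\upsilon}}$. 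With the $\{k_\w>q(n)\}$ term being $\mathcal O(e^{-\hat\gamma\alpha n^{\upsilon/2}})$ and $n_2(\w)$ the largest of the random thresholds above, we obtain $\Lambda_\w(\{T_\w>n\})\le C_3e^{-\gamma_3 n^{\upsilon/2}}$ for $n\ge n_2(\w)$, and, combining the three $n^{\upsilon/2}$-tails, $P(\{n_2(\w)>n\})\le C_4e^{-\gamma_4 n^{\upsilon/2}}$.

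I expect the main obstacle to be exactly this bookkeeping of unbounded, random waiting times: finding the one envelope ($\sqrt k$) dominating both $g_0(\sigma^{k}(\w))$ and $\leb(\Delta_{\sigma^k(\w)})$ that is simultaneously summable and strong enough to work at every query time $\le T_\w$; arranging the dyadic thresholds $\phi_\ell$ so that $\sum_{j\notin A}n_j<T_\w/2$ holds for all $\ell$ at once, which forces $q(n)=\mathcal O(\sqrt n)$ and, through Corollary~\ref{coroln4}'s geometric-in-$q(n)$ bound, is precisely what produces the loss $\upsilon\mapsto\upsilon/2$; and verifying that $n_4(\w)$ and $H(\w)$ enter $n_2(\w)$ via the transformations $m\mapsto(m/\alpha)^{2/\upsilon}$ and $m\mapsto(c\alpha m)^{2/(2-\upsilon)}$, both of which preserve an $n^{\upsilon/2}$-tail (here one uses $\upsilon(2-\upsilon)/2\ge\upsilon/2$ for $\upsilon\le1$). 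The Lipschitz-constant dependence in Lemmas~\ref{vis} and~\ref{K2} is harmless: it affects only constants and a deterministic index shift $i_0(\varphi,\varphi')$, which is absorbed into $n_2(\w)$.
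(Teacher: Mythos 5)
Your argument reaches the same $\upsilon\mapsto\upsilon/2$ loss, but by a genuinely different mechanism than the paper's. The paper invokes \cite[Remark 3.1]{BBM02} to produce a random variable $n_3(\w)$ dominating both $g_0(\w)$ and $\leb(\Delta_\w)$, keeps a single constant cutoff $K$ together with $\hat q(n)=\lfloor\alpha n^{\hat\upsilon}\rfloor$, and absorbs the offending product $\prod_{j\le\hat q(n)} n_3(\sigma^{\tau_\w^{j-1}+\ell_0}(\w))$ by splitting the cylinders of $\xi_\w^{\hat q(n)}$ into $n$-good (where $\sum_{j\le\ell}n_3(\cdot)^\upsilon\le\rho n^\upsilon$) and $n$-bad, showing via Fubini plus Borel--Cantelli that the $\Lambda_\w$-mass of the bad cylinders decays like $e^{-\gamma'n^{\upsilon-\hat\upsilon}}$; the final rate is $\min(\hat\upsilon,\upsilon-\hat\upsilon)$, optimized at $\upsilon/2$. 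You instead run Borel--Cantelli on the shift of $g_0$ to manufacture a uniform envelope $g_0(\sigma^k(\w))\le H(\w)+\sqrt k$ with $P(\{H>m\})=\mathcal O(e^{-\gamma'm^\upsilon})$, and compensate for the a priori unboundedness of the query indices $\tau_\w^{j-1}+\ell_0$ by a dyadic decomposition in the value of $T_\w$ with level-dependent cutoffs $\phi_\ell$; the $\sqrt k$ envelope (the cheapest growth compatible with Borel--Cantelli summability under a stretched-exponential tail with exponent $\upsilon$) constrains $q(n)\lesssim\sqrt n$, and the geometric bound on $\{k_\w>q(n)\}$ in \eqref{n4 2} then delivers the $\upsilon/2$. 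You thus trade the paper's good/bad cylinder bookkeeping for a dyadic sum over $T_\w$ plus an explicit envelope; both approaches work and both land on the same exponent. Two small points to tighten: since Lemma~\ref{K2} queries the return-time tail at the shifted index $n-\ell_0$, the threshold $\phi_\ell$ must exceed $g_0(\sigma^{\tau_\w^{j-1}+\ell_0}(\w))+\ell_0+1$, not just $g_0(\cdot)$; and the envelope for $\leb(\Delta_{\sigma^k(\w)})$ obtainable from the $g_0$-envelope alone is $\mathcal O(H(\w)^2+\sqrt k)$ rather than a constant multiple of $H(\w)+\sqrt k$ (the extra power of $H$ comes from the up to $\mathcal O(H(\w)^2)$ tower levels with $\ell<g_0(\sigma^{k-\ell}(\w))$), which only affects the prefactor $(1+P_\ell)^{q(n)}$ polynomially and does not change the final $\upsilon/2$.
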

\begin{proof}
Let $0<\hat \upsilon<\upsilon$ to be fixed later. From \cite[Remark 3.1]{BBM02} one can see that estimates \eqref{return stret RtTnu} imply that there exist $C_3, C_3', \tilde C_3, \gamma_3, \gamma_3'>0$ and a random variable $n_3(\w)$ defined on a full $P$-measure subset of $\Omega$ so that
 \begin{equation}\label{n3}
\left\{
\begin{array}{ll}
        \leb(\{R_\w>n\})\leq C_3e^{-\gamma_3 n^{\upsilon}}\leb(\Delta_\w), & \forall\,n\geq n_3(\w) \\
        \leb(\Delta_\w)\leq n_3(\w) +\tilde C_3\\
         P(\{n_3(\w)>n\})\leq C_3'e^{-\gamma_3' n^\upsilon},& \forall\,n\geq 1.
      \end{array}\right.
\end{equation}
Then, from Lemma \ref{K2} we may consider  $a_n=\tilde C_1 e^{-\tilde\gamma_1 n^{\hat \upsilon}}$ to be so that $$\Lambda_\w(\{\tau_\w^j(x,x')-\tau_\w^{j-1}(x,x')>n\vert \tau_\w^{1},\ldots,\tau_\w^{j-1}\})\leq a_n n_3(\sigma^{\tau_\w^{j-1}+\ell_0}(\w)).$$
 Let $k_\w$, $Y_\w(A,n_j)$, $b_n$ and large $K$ be defined accordingly as in the proof of Lemma \ref{R to T unif} and set $\hat q(n)=\lfloor\alpha n^{\hat\upsilon}\rfloor$.
Conditioning
successively with respect to the different times, we get
  \begin{equation*}
  \Lambda_\w \bigl( Y_\w(A,n_j)\bigr) \leq \prod_{j\in A} \Lambda_\w(\{ \tau_\w^j-\tau_\w^{j-1}=n_j \vert
  \tau_\w^{1},\ldots,\tau_\w^{j-1}\})
  \leq \prod_{j\in A} a_{n_j} n_3(\sigma^{\tau_\w^{j-1}+\ell_0}(\w)).
  \end{equation*}
We recall that
  \begin{equation}
  \label{cdcd}
  \{ (x,x') : T_\w(x,x') > n\}
  \subset \{ k_\w(x,x')>\hat q(n)\} \cup
  \bigcup_{A \subset \{1,\ldots,\hat q(n)\}}
  \bigcup_{\substack{n_j \geq K \\ \sum_A n_j \geq n/2}} Y_\w(A, n_j),
  \end{equation}
and from \eqref{n4} there are $\hat C, \tilde C, \hat\gamma, \tilde\gamma>0$, $0<\varrho<1$ and a random variable $n_4(\w)$ such that
 \begin{equation}\label{n4 2}
\left\{
\begin{array}{ll}
\Lambda_\w(\{k_\w>\hat q(n)\})\leq  \hat C e^{ -\hat \gamma \hat q(n)}, & \forall n \textrm{ such that } \hat q(n)\geq n_4(\w)\\
P(\{n_4(\w)>n\})\leq \tilde C e^{ -\tilde \gamma n},& \forall n\geq 1.
      \end{array}\right.
\end{equation}
Let us now concentrate in the measure of the second part in \eqref{cdcd}. We have
  \begin{align}
  \Lambda_\w \left(
  \bigcup_{A \subset \{1,\ldots,\hat q(n)\}}
  \bigcup_{\substack{n_j \geq K \\ \sum_A n_j \geq n/2}} Y_\w(A, n_j) \right)
  &
  \leq \sum_{A \subset \{1,\ldots,\hat q(n)\}} \sum_{\substack{n_j \geq K\nonumber
  \\ \sum_A n_j \geq n/2}}  \prod_{j\in A} a_{n_j}n_3(\sigma^{\tau_\w^{j-1}+\ell_0}(\w))
  \\&
   \leq  \prod_{j=1}^{\hat q(n)} n_3(\sigma^{\tau_\w^{j-1}+\ell_0}(\w))\cdot 2^{\hat q(n)} \sum_{p=n/2}^\infty b_p.\label{stexp 2}
  \end{align}
 Fix $0<\rho<1$.  We say that $\Gamma\in\xi_\w^{\hat q(n)}$ is $n$-\emph{good} if for all $(x,x')\in\Gamma$ and $\ell\leq \hat q(n)$,
\begin{equation}\label{viol}\sum_{j=1}^\ell n_3(\sigma^{\tau_\w^{j-1}}(\w))^{\upsilon}\leq\rho n^{\upsilon}.\end{equation} The remaining cylinders are called $n$-\emph{bad}. We claim that there is a random variable $\hat n_3(\w)\geq n_3(\w)$ on a full measure subset of $\Omega$ such that, for all $\hat\upsilon'=\upsilon-\hat\upsilon$ and $n\geq \hat n_3(\w)$, the $\Lambda_\w$-measure of the $n$-\emph{bad} cylinders
is less than $\hat C_3e^{-\hat\gamma_3 n^{\hat\upsilon'}}$ and $P(\{\hat n_3(\w)>n\})\leq \hat C_3' e^{-\hat \gamma_3' n^{\hat\upsilon'}}$. Indeed, note first that from \eqref{n3}, for each fixed $1\leq\ell\leq \hat q(n)$ and $0=\tau^0,\tau^1,\ldots,\tau^{\ell-1}$,
\begin{equation}\label{aaa}\begin{array}{ll}\displaystyle
P\left(\left\{\sum_{j=1}^\ell n_3(\sigma^{\tau^{j-1}}(\w))^\upsilon>\rho n^\upsilon\right\}\right)&\displaystyle\leq \sum_{j=1}^\ell P\left(\left\{n_3(\sigma^{\tau_{j-1}}(\w))^\upsilon>\frac{\rho n^\upsilon}\ell\right\}\right)\\
&\displaystyle\leq C_3'\hat q(n) e^{-\gamma_3'\rho n^\upsilon/\hat q(n)}
\end{array}
\end{equation}
Let $M_n\subset \Omega\times\Delta$ be the set of points $(\w,x,x')$ such that $(x,x')$ belongs to an $n$-bad $\Gamma_\w\in\xi_\w^{\hat q(n)}$. Thus \eqref{aaa} implies $(P\times\Lambda)(M_n)\leq C_3'\hat q(n) e^{-\gamma_3'\rho n^\upsilon/\hat q(n)}$. Fix any $0<\eta<1$. Setting
$$M_n'=\left\{\w:\displaystyle\int_{\Delta_\w\times\Delta_\w}\chi_{M_n}\,d \Lambda_\w(x,x') > C_3'\hat q(n) e^{-\eta\gamma_3'\rho n^\upsilon/\hat q(n)} \right\},$$
we must have
\begin{equation}\label{bbb}P\left(M_n'\right)< e^{-(1-\eta)\gamma_3'\rho n^\upsilon/\hat q(n)},\end{equation}
otherwise, using Fubini's theorem we are lead into a contradiction.
Define, for each $n$,
$$B_n=\{\w: \exists\, k\geq n\, \textrm{ s.t. }\, \Lambda_\w(\{(x,x'):(\w,x,x')\in M_k\})>C_3'\hat q(k) e^{-\eta\gamma_3'\rho k^\upsilon/\hat q(k)} \}.$$
Then \eqref{bbb} implies that $P(B_n)\leq\sum_{k\geq n} e^{-(1-\eta)\gamma_3'\rho k^\upsilon/\hat q(k)} $, and therefore $\displaystyle\lim_{n\to\infty}P(B_n)=0$. For $P$ a.e. $\w\in\bigcup_n (\Omega\setminus B_n)$, we  set $\hat n_3(\w)=\sup\{n_3(\w),\inf\{n\geq 1: \w\notin B_n\}\}$.
We have then
\begin{align*}
P\left(\{\hat n_3(\w)>n \}\right) &\leq  P(\{\w:  \exists m>n \,\, \textrm{s.t.} \,\, \w\in M_m'\})          + P(\{n_3(\w)>n\})\\
&\leq \sum_{k>n}e^{-(1-\eta)\gamma_3'\rho k^\upsilon/\hat q(k)}+C_3'e^{-\gamma_3' n^\upsilon}\\
&\leq \hat C_3' e^{-\hat\gamma_3'n^{\hat\upsilon'}}.
\end{align*}
Moreover, the $\Lambda_\w$-measure of the $n$-bad cylinders is less than $C_3'\hat q(n) e^{-\eta\gamma_3'\rho n^\upsilon/\hat q(n)}$, for $n\geq \hat n_3(\w)$, proving the claim.
On the other hand, condition \eqref{viol} leads to
$$\prod_{j=1}^{\hat q(n)} n_3(\sigma^{\tau_\w^{j-1}+\ell_0}(\w))\leq e^{\hat q(n)\log(\rho n^{\hat\upsilon'})}.$$
Taking into account the claim and \eqref{O}, for $n\geq \hat n_3(\w)$ we have that \eqref{stexp 2} is less than $\tilde C e^{-\tilde\gamma n^{\hat\upsilon'}}$, if $\alpha$ is small enough, which
together with \eqref{cdcd} and \eqref{n4 2} implies that if $n\geq  n_2(\w)=\max\{\hat n_3(\w),n_4(\w)\}$,  we have $$\Lambda_\w(\{T_\w(x)>n\})\leq C_3e^{-\gamma_3 n^{\upsilon^*}},$$ with $\upsilon^*={\min\{\hat\upsilon,\hat\upsilon'\}}$. The optimal result occurs considering $\upsilon^*=\upsilon/2$.
\end{proof}

\subsection{From stopping times to the decay of correlations}\label{stdc}
\subsubsection{Matching} Due to the introduction of the \emph{waiting times} $n_i(\w)$ in the estimates over the joint stopping times at Lemmas \ref{R to T unif} and \ref{R to T}, we are not in conditions to capitalize the Gou\"{e}zel's \cite{G06} strategy to improve the estimates on the joint return times in \cite[Proposition 5.9]{BBM02}, that we recall in the following.

\begin{Lemma}\label{T to match}

Assume that there exist $C_i, \gamma_i$, $i=1,2$, and for $P$ a.e. $\w$ a positive integer $n_1(\w)$, such that
\begin{equation*}
\left\{
\begin{array}{ll}
       (\leb\times\leb)(\{T_\w>n\})\leq C_1e^{-\gamma_1 n^{\upsilon}}, & \forall\,n\geq n_1(\w) \\
         P(\{n_1(\w)>n\})\leq C_2e^{-\gamma_2 n^{\upsilon}},& \forall\,n\geq 1
      \end{array}\right..
\end{equation*}
Then there exist $C_3, \gamma_3 >0$ and a random variable $n_0(\w)$ on a full measure subset of $\Omega$, with $$P(\{n_0(\w)>n\})\leq C_3 e^{-\gamma_3n^{\upsilon/2}}$$ such that,
for each pair of absolutely continuous probability measures $\lambda, \lambda'$ in $\Delta$ with densities $\varphi,\varphi'\in\mathcal F_\beta^+\cap\mathcal L^\infty$,
there is $C$ (depending only on $\hat C_\varphi,\hat C_{\varphi'}, \tilde C_\varphi$ and $\tilde C_{\varphi'}$), and $\gamma>0$ such that for almost every $\w$ and $n\geq n_0(\w)$
$$\left|(F_\w^n)_*\lambda_\w -(F_\w^n)_*\lambda_\w'\right|\leq C e^{-\gamma n^{\upsilon/2}},$$
where $|\cdot|$ stands for total mass of a (signed) measure.
\end{Lemma}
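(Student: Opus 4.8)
The plan is to run the coupling (``matching'') scheme for random Young towers exactly as in \cite[Proposition 5.9]{BBM02}, feeding it the joint-return-time tail for $\Lambda=\lambda\times\lambda'$ (rather than for $\leb\times\leb$), and replacing the usual uniform lower bound on the mass coupled at each round---unavailable here, since the densities $\rho_\w$ are bounded above by $K_1$ but not below---by the large-deviation estimate of Corollary \ref{coroln4}. Starting from the pair $(\lambda,\lambda')$ with densities in $\mathcal F_\beta^+\cap\mathcal L^\infty$, one runs the process until the joint return time $T_\w$, peels off the common part of the two push-forwards on the base of $\Delta_{\sigma^{T_\w}(\w)}$, and iterates on the (un-normalised) uncoupled remainder. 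The point, borrowed from estimate (3.9) of \cite{BBM02}, is that $\mathcal F_\beta^+\cap\mathcal L^\infty$ is preserved under this operation with constants depending only on $\kappa$, $K$ and $K_1$; moreover, by the last sentences of Lemmas \ref{vis} and \ref{K2} the dependence of the relevant constants on $\lip(\varphi)$ and $\lip(\varphi')$ disappears after the first $i_0(\varphi,\varphi')$ rounds, so it is confined to an overall factor $C=C(\hat C_\varphi,\hat C_{\varphi'},\tilde C_\varphi,\tilde C_{\varphi'})$. Writing $T_\w^{(q)}$ for the $q$-th iterated joint return time and $U_\w(q)$ for the $\Lambda_\w$-mass of the pairs not yet coupled after $q$ rounds, the construction gives, for all $q,n\ge1$,
\[
\left|(F_\w^n)_*\lambda_\w-(F_\w^n)_*\lambda_\w'\right|\le 2\,U_\w(q)+2\,\Lambda_\w(\{T_\w^{(q)}>n\}).
\]

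For the first term I use Lemma \ref{vis}: conditioned on not having coupled by the $(i-1)$-st one-sided stopping time, the probability of coupling exactly at $\tau_\w^i$ is at least $V^{\tau_\w^i-\tau_\w^{i-1}}_{\sigma^{\tau_\w^{i-1}}(\w)}/C$. Chaining these bounds over the stopping times met in the first $q$ rounds and using $1-t\le e^{-t}$ gives $U_\w(q)\le C\int_{\Delta_\w\times\Delta_\w}e^{-Q_q^{\{\tau_\w^i\}}(\w)/C}\,d\Lambda_\w$; splitting this integral over $\{Q_q^{\{\tau_\w^i\}}(\w)\ge\rho q\}$ and its complement and applying Corollary \ref{coroln4} yields $U_\w(q)\le C\varrho_0^{\,q}$ for some $\varrho_0<1$, valid once $q\ge n_4(\w)$, where $P(\{n_4(\w)>n\})\le C\varrho^{n/2}$.

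For the second term I argue as in the non-uniform step of Lemma \ref{R to T}. The increments $D_j=T_\w^{(j)}-T_\w^{(j-1)}$ satisfy, conditionally on the cylinder determined by $T_\w^{(1)},\dots,T_\w^{(j-1)}$, a bound $\Lambda_\w(\{D_j>p\}\mid\cdot)\le a_p\,w\bigl(\sigma^{T_\w^{(j-1)}}(\w)\bigr)$ with $a_p=\tilde C\,e^{-\tilde\gamma p^{\hat\upsilon}}$ (for some $\hat\upsilon<\upsilon$ to be fixed) and a weight $w$ governed by the random threshold $n_1$ of the hypothesis---to pass from the $\leb\times\leb$-tail of $T$ to the conditional $\Lambda_\w$-tail one uses the bounded densities together with an analogue of estimate \eqref{n3} drawn from \cite[Remark 3.1]{BBM02}. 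Set $q(n)=\lfloor\alpha n^{\hat\upsilon}\rfloor$ with $\alpha$ small. Splitting the cylinders on which $T_\w^{(1)},\dots,T_\w^{(q(n))}$ are constant into $n$-good ones (those on which the cumulative effect of the weights is controlled as in \eqref{viol}) and $n$-bad ones, a Fubini plus Borel--Cantelli argument exactly as in Lemma \ref{R to T} shows the $n$-bad set has $\Lambda_\w$-mass $\lesssim e^{-\gamma n^{\upsilon-\hat\upsilon}}$ beyond a random time $\hat n_1(\w)\ge n_1(\w)$ with $P(\{\hat n_1(\w)>n\})\lesssim e^{-\gamma n^{\upsilon-\hat\upsilon}}$, while on the $n$-good cylinders the convolution trick ($b_p=\mathbf 1_{p\ge K}a_p$, $(b\star b)_p\le b_p$) bounds the remaining sum by $2^{q(n)}\sum_{p\ge n/2}b_p\lesssim e^{-\gamma' n^{\hat\upsilon}}$ once $\alpha$ is small enough. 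Choosing $\hat\upsilon=\upsilon/2$ balances the two losses, so both terms of the displayed inequality are $\lesssim e^{-\gamma n^{\upsilon/2}}$ as soon as $n\ge n_0(\w):=\max\{\hat n_1(\w),\lceil(n_4(\w)/\alpha)^{2/\upsilon}\rceil\}$, and this $n_0$ has the claimed tail $P(\{n_0(\w)>n\})\le C_3e^{-\gamma_3 n^{\upsilon/2}}$. The rate $\gamma$ is assembled from $\alpha$, $-\log\varrho_0$, $\tilde\gamma$ and the system constants, while only $C$ carries the dependence on $\hat C_\varphi,\hat C_{\varphi'},\tilde C_\varphi,\tilde C_{\varphi'}$.

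The hard part is the second term $\Lambda_\w(\{T_\w^{(q)}>n\})$: the tail of $T$ is effective only past the fluctuating, $\w$-dependent threshold $n_1$, which at the $j$-th round is read at the shifted configuration $\sigma^{T_\w^{(j-1)}}(\w)$, so the product of thresholds along $q$ rounds must be controlled through the $n$-good/$n$-bad dichotomy; this is exactly what prevents using Gou\"ezel's refinement (which would preserve the exponent) and forces the loss from $\upsilon$ to $\upsilon/2$. A secondary but necessary point is to verify that the constants for which the renormalised base densities stay in $\mathcal F_\beta^+\cap\mathcal L^\infty$ can be taken independent of the round, so that Lemmas \ref{vis} and \ref{K2} and Corollary \ref{coroln4} remain applicable at every stage.
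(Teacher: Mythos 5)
The paper does not actually supply a proof of this lemma; it defers to \cite[Proposition 5.9]{BBM02} (as adapted to the stretched-exponential, random-threshold setting), and your plan follows the same BBM02-style coupling route, so to that extent the approaches coincide: decompose as $2U_\w(q)+2\Lambda_\w(\{T_\w^{(q)}>n\})$, control the second term by a Lemma~\ref{R to T}-type good/bad cylinder argument on the increments $D_j=T_\w^{(j)}-T_\w^{(j-1)}$ with random thresholds $n_1(\sigma^{T_\w^{(j-1)}}(\w))$, and balance with $q(n)\asymp n^{\upsilon/2}$ to obtain the halved exponent.

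Where I would push back is on your treatment of the first term. Lemma~\ref{vis} does \emph{not} bound the conditional probability of \emph{coupling} at $\tau^i_\w$; it bounds the conditional probability that a \emph{simultaneous return} has not yet occurred, i.e.\ it controls the joint return time $T_\w$, and in the paper it is used (via Corollary~\ref{coroln4} and estimate~\eqref{n4}) to bound $\Lambda_\w(\{k_\w>q\})$ inside the proofs of Lemmas~\ref{R to T unif} and~\ref{R to T}, not the coupling loss itself. The coupled fraction at a simultaneous return is a separate matter, coming from matching the pushed-forward base densities (uniformly regular in $\mathcal F_\beta^+\cap\mathcal L^\infty$ by Markov distortion, bounded above by $K_1$), and it is not directly given by $V^{\tau^i_\w-\tau^{i-1}_\w}_{\sigma^{\tau^{i-1}_\w}(\w)}/C$. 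Moreover, your $U_\w(q)$ is declared at the $T^{(q)}$-level (``not coupled after $q$ rounds'' of iterated joint returns) but estimated by chaining Lemma~\ref{vis} over the $\tau^i$-stopping times and invoking Corollary~\ref{coroln4} for $Q_q^{\{\tau^i_\w\}}$: these count different objects. One can reconcile them (since $\tau^q_\w\leq T^{(q)}_\w$, the uncoupled mass after $q$ $\tau$-rounds dominates that after $q$ $T$-rounds), but this must be said, and you must separately record where the uniform matching fraction enters. As written, the bookkeeping for the first term is not self-consistent; the second-term analysis and the final balancing are sound, and the claim that only $C$ (not $\gamma_3$ or $n_0$) depends on $\hat C_\varphi,\hat C_{\varphi'},\tilde C_\varphi,\tilde C_{\varphi'}$ is correctly handled via the ``$i\geq i_0(\varphi,\varphi')$'' caveat in Lemmas~\ref{vis} and~\ref{K2}.
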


\subsubsection{From equilibrium to decay of correlations}
Let us now point out how the rates on the convergence to equilibrium gives rise to similar estimates for the rates of decay of correlations. This finishes the proof of Theorem \ref{dacayrate}, which combined with Theorem \ref{random Markov structure} gives Theorems \ref{thA} and \ref{thB}. Let $\nu=\{\nu_\w\}$ be the family of measures given by Theorem \ref{exist.mu.induced}, and recall that $\rho_\w=d\nu_\w/d\leb$.
Consider $\varphi\in\mathcal L^\infty$. We assume first that $\psi\in\mathcal F_\beta^+\cap\mathcal L^\infty$, and let $b_\w=(\int_{\Delta_\w} \psi_\w \,d\nu_\w)^{-1}$, $\tilde{\psi}_\w={b_\w}\psi_\w $,  with $\int_{\Delta_\w} \tilde\psi_\w\,d\nu_\w =1$, and $\tilde\psi=\{\tilde\psi_\w\}$. Consider an absolutely continuous probability measure $\lambda$ on $\Delta$ such that $d\lambda_\w/d\leb=\tilde\psi_\w\rho_\w\in F_\beta^+\cap\mathcal L^\infty$. We have
\begin{eqnarray*}
\bar C^+_{\w}(\varphi,\psi,\nu,n)&=&\frac1{b_\w}\bar C^+_{\w}(\varphi,\tilde\psi,\nu,n)\\
&=&\frac1{b_\w}\left|\int_{\Delta_\w}(\varphi_{\sigma^n(\w)}\circ F_\w^n)\,d\lambda_\w -\int_{\Delta_{\sigma^n(\w)}}\varphi_{\sigma^n(\w)}\,d\nu_{\sigma^n(\w)}\int_{\Delta_\w}\tilde\psi_\w\,d\nu_\w \right|\\
&\leq&\frac{\tilde C_\varphi}{b_\w}\left|(F_\w^n)_*\lambda_\w -\nu_{\sigma^n(\w)}\right|\\
&\leq&\tilde C_\varphi \tilde C_\psi\left|(F_\w^n)_*\lambda_\w -\nu_{\sigma^n(\w)}\right|
\end{eqnarray*}
Finally, Lemmas \ref{R to T unif}, \ref{R to T} and Lemma \ref{T to match} lead to the desired estimates just by taking $\lambda'=\nu$, for some $C'=C'(\varphi,\psi)$. For $\psi\in\mathcal F_\beta$ we obtain the same estimates for $\hat\phi_\w=\psi_\w + C_\psi+1 \in F_\beta^+\cap\mathcal L^\infty$.

\bibliographystyle{novostyle}

\end{document}